\documentclass[12pt]{article}

\global\arraycolsep=1pt
\usepackage{amsmath,amssymb,amsthm, graphicx,latexsym,epic, color}

\usepackage{ytableau}
\ytableausetup{aligntableaux=top,mathmode,boxsize=0.8em}
\def\young(#1){\ytableaushort{#1}}
\def\yng(#1){\tiny {\ydiagram{#1}}}
\newcommand{\beq}{\begin{equation}}
\newcommand{\eeq}{\end{equation}}
\newcommand{\beqa}{\begin{eqnarray}}
\newcommand{\eeqa}{\end{eqnarray}}
\newcommand{\CR}{\nonumber \\}

\newcommand{\barsigma}{\overline{\sigma}}
\numberwithin{equation}{section}

 \newtheorem{dfn}{Definition}[section]
 
 \newtheorem{prp}[dfn]{Proposition}
 \newtheorem{lem}[dfn]{Lemma}
 \newtheorem{cor}[dfn]{Corollary}
 
 \newtheorem{rmk}[dfn]{Remark}
  \newtheorem{con}[dfn]{Conjecture}

\newtheorem*{ack}{Acknowledgements}

\begin{document}
%%%%%%%%%%%%%%%%%%%%%%%%%%%%%%%%%%%%%%%%%%%%%%%%%%%%%%%%%%%%%%%%%%%%%%%%%%%%%%%%%%%%%%%%%%%%%%%%%%%%%%%%%%%%%%%%%%%%%%%%%%

%%%%%%%%%%%%%%%%%%%%%%%%%%%%%%%%%%%%%%%%%%%%%%%%%%%%%%%%%%%%%%%%%%%%%

%%%%%%%%%%%%%%%%%%%%%%%%%%%%%%%%%%%%%%%%%%%%%%%%%%%%%%%%%%%%%%%%%%%%%
\begin{titlepage} 
%
%%
%\begin{flushright}
%version one
%\end{flushright}
%%
%
\begin{center}
\Large{\bf Shifted quantum toroidal algebra of type $\mathfrak{gl}_{1|1}$
and the Pieri rule of the super Macdonald polynomials}
\end{center} 

\bigskip
\bigskip

\begin{center}
\large 
Hiroaki Kanno$^{a,}$\footnote{kanno@math.nagoya-u.ac.jp}, 
Ryo Ohkawa$^{b,}$\footnote{ohkawa@kurims.kyoto-u.ac.jp}, and
Jun'ichi Shiraishi$^{c,}$\footnote{shiraish@ms.u-tokyo.ac.jp} \\

\bigskip

$^a${\small {\it Graduate School of Mathematics, Nagoya University,
Nagoya 464-8602, Japan}}\\
$^b${\small {\it Research Institute for Mathematical Sciences, Kyoto University, Kyoto
606-8502, Japan}} \\
$^c${\small {\it Graduate School of Mathematical Sciences, University of Tokyo, Komaba, Tokyo 153-8914, Japan}}
\end{center}
\bigskip

\begin{abstract}
The super Macdonald polynomials indexed by the super partitions form 
a basis of the level zero super Fock module (combinatorial representation) of the quantum toroidal algebra 
$\mathcal{U}_{q,t}(\widehat{\widehat{\mathfrak{gl}}}_{1|1})$. 
The action of the super charges of $\mathcal{U}_{q,t}(\widehat{\widehat{\mathfrak{gl}}}_{1|1})$
implies the Pieri rule of the super Macdonald polynomials. 
We can express the Pieri rule in terms of differential operators in the power sums $p_k$ and the fermionic power sums $\pi_k$,
which leads to the operators on the Fock space of a free boson and a free fermion. 
From the Pieri rule we compute the supersymmetric Hamiltonians given by the anti-commutator 
of the super charges and recover the results previously obtained in the literature. 
It is remarkable that we have to deal with a shifted quantum toroidal algebra.
\end{abstract}
\end{titlepage}

%%%%%%%%%%%%%%%%%%%%%%%%%%%%%%%%%%%%%%%%%%%%%%%%%%%%%%%

\setcounter{tocdepth}{1}
\tableofcontents

\setcounter{footnote}{0}

%%%%%%%%%%%%%%%%%%%%%%%%%%%%%%%%%%%%%%%%%%%%%%%%%%%%%%%%%

\section{Introduction and Summary}

The super Macdonald polynomials $\mathcal{M}_{\Lambda}(x, \theta;q,t)$ generalize the Macdonald polynomials
to the super space with additional Grassmann coordinates $\theta_i$ \cite{Blondeau-Fournier:2011sft}.\footnote{
There is another kind of the super Macdonald polynomials associated with the root system of 
Lie super algebra $A(m\vert n)$ \cite{Sergeev-Veselov1,Sergeev-Veselov2}
 (see also \cite{Cheewaphutthisakun:2025zoc,Cheewaphutthisakun:2025ovm}).}
They are invariant under {\it simultaneous} permutations of the commuting and the anti-commuting coordinates;
\begin{equation}
\mathcal{M}_{\Lambda}(x_i, \theta_i ;q,t) =  \mathcal{M}_{\Lambda}(x_{\sigma(i)}, \theta_{\sigma(i)} ;q,t), \quad \sigma \in S_N.
\end{equation}
One may regard $(x_i, \theta_i)$ as the coordinates of $N$-particles on the super space. 
The super Macdonald polynomials $\mathcal{M}_{\Lambda}(x, \theta;q,t)$ are indexed by the set of super partitions.
A super partition is a non-increasing sequence of non-negative elements in $\mathbb{Z}/2$;
\begin{equation}
\Lambda_1 \geq \Lambda_2 \geq \ldots \geq \Lambda_i \geq  \ldots \geq \Lambda_{\ell(\Lambda)} >0 , \qquad \Lambda_{\ell(\Lambda)+1} =0,
\end{equation}
where $\ell(\Lambda)$ is the number of non-zero components in $\Lambda$. %$\vert \Lambda \vert = \sum_{i=1}^{\ell(\Lambda)} \Lambda_i$ is called level of $
We define integral elements in $\mathbb{Z}/2$ are even and non-integral elements are odd, which gives a $\mathbb{Z}_2$ grading.
We require that if $\Lambda_k$ is odd, the inequality is strict $\Lambda_{k-1} > \Lambda_k > \Lambda_{k+1}$.

The Macdonald polynomials $P_\lambda(x;q,t)$ labelled by the partitions $\lambda$ are closely related to
the representation theory of the $\mathfrak{gl}_1$ quantum toroidal algebra \cite{FT,FFJMM,Awata:2011ce}. 
In a similar manner we can investigate the properties of the super Macdonald polynomials 
in terms of the quantum toroidal algebra $\mathcal{U}_{q,t}(\widehat{\widehat{\mathfrak{gl}}}_{1|1})$
of type $\mathfrak{gl}_{1|1}$ \cite{Galakhov:2024cry,Galakhov:2024zqn}, 
which has the super (odd) currents $E_i(z), F_i(z)~i=1,2$. However, there is also
a crucial difference from the case of the Macdonald polynomials. As we will explain below the toroidal algebra is
shifted, which gives us several technical challenges in the representation theory.
In \cite{Kanno:2025ifd} we investigated the relation between the super Macdonald polynomials and 
the Nekrasov-type factors arising from the moduli space of instantons (framed perverse coherent sheaves) on the blow-up of
$\mathbb{C}^2$ or its compactified version $\mathbb{P}^2$.\footnote{See \cite{Filoche:2026xix}, for a recent development 
on the relation of the wall-crossing formula of instanton counting on the blow-up and the combinatorics of the super partitions.}
It was motivated by the fact that the super Macdonald polynomials
form a basis of the level zero Fock representation of $\mathcal{U}_{q,t}(\widehat{\widehat{\mathfrak{gl}}}_{1|1})$,
which in turn arises as the BPS algebra of $D$-branes on the resolved conifold. It is expected that the BPS algebra
acts on the equivariant $K$-theory group of the moduli space of BPS states. 
The torus fixed points on the moduli space are labelled by the super partitions.
Assuming the equivariant localization, we can introduce the Nekrasov-type factor for a pair of fixed points. 
Then one of the main results in  \cite{Kanno:2025ifd} is that the change of the Nekrasov-type factor 
under the addition or removal of a half-box in the super Young diagram is related the the Pieri rule 
of the super Macdonald polynomials which is obtained by the representation theory of the quantum toroidal algebra.

The Macdonald polynomials $P_\lambda(x;q,t)$ are simultaneous eigenstates of the
Ruijsenaars-Macdonald Hamiltonians \cite{Ruijs,Mac1988,MacD}
\begin{equation}\label{RM-Hamiltonian}
\hat H_r(x):=t^{r(r-1)/2}\sum_{|I|=r}\prod_{i\in I;j\notin I}\frac{tx_i-x_j}{x_i-x_j}\prod_{i\in I}\hat T_{q,x_i}, \qquad 1 \leq r \leq N,
\end{equation}
where 
\begin{equation}
\hat T_{q,x_i}f(x_1,\ldots,x_i,\ldots,x_N):=f(x_1,\ldots,qx_i,\ldots,x_N)
\end{equation}
is the $q$-difference operator on $x=(x_1, \ldots, x_N)$. The eigenvalues are
\begin{equation}
\hat D^x(u) P_{\lambda}(x;q,t) =  P_{\lambda}(x;q,t) \prod_{i=1}^N (1- u t^{N-i} q^{\lambda_i}),
\end{equation} 
where we have introduced the generating function of the mutually commuting Hamiltonians;
\begin{equation}
\hat D^x(u):=\sum_{r=0}^N (-u)^r\hat H_r(x).
\end{equation}
Since the Macdonald polynomials are invariant under the involution of the parameters $\iota : (q,t) \to (q^{-1}, t^{-1})$,
namely $P_\lambda(x;q,t) = P_\lambda(x;q^{-1},t^{-1})$, they are also eigenfunctions of the $(q,t)$ inverted Hamiltonians $\iota(\hat H_r)$. 
Consequently $\iota(\hat H_r)$ are mutually commuting with the Hamiltonians \eqref{RM-Hamiltonian}.
In the level zero Fock representation of the quantum toroidal algebra 
$\mathcal{U}_{q,t}(\widehat{\widehat{\mathfrak{gl}}}_{1|1})$, the original Hamiltonians $\hat H_r$ and 
the $(q,t)$ inverted Hamiltonians $\iota(\hat H_r)$ come from the positive and the negative Cartan currents $K^\pm(z)$, respectively. 

The supersymmetric Hamiltonians for the super Macdonald polynomials have been already 
worked out in \cite{Blondeau-Fournier:2012exj,Alarie-Vezina:2019ohz} and \cite{Galakhov:2025phf}. 
In this paper we show that the same Hamiltonians are obtained from the Pieri rule for the super Macdonald polynomials, 
which follows from the level zero Fock representation of $\mathcal{U}_{q,t}(\widehat{\widehat{\mathfrak{gl}}}_{1|1})$ 
with the generating currents (see section \ref{section:shifted-algebra});
\begin{align}
E_i(z) &= \sum_{k \in \mathbb{Z}} E_{i,k} z^{-k}, \qquad F_i(z) = \sum_{k \in \mathbb{Z}} F_{i,k} z^{-k}, \\
K_i^{\pm}(z) &= \sum_{r \geq 0} K_{i, \pm r}^{\pm} z^{\mp r}
= K_{i,0}^{\pm} \exp 
\left( \pm \sum_{r=1}^\infty H_{i, \pm r}z^{\mp r} \right).
\end{align}
A significant aspect of our derivation is that the algebra we employ is a shifted quantum toroidal algebra. 
Namely in the commutation relation
\begin{align}
\label{EF-com}
\left[ E_i(z), F_j(w) \right]_{+} &= \delta_{ij} \delta \left( \frac{w}{z} \right)  
\left( z^{r_i} K_i^{+}(z) - K_i^{-}(z) \right), \\
\left[ E_i(z), F_{j,k}\right]_{+} &= \delta_{ij} z^k 
\left( z^{r_i} K_i^{+}(z) - K_i^{-}(z) \right), \qquad (r_1=-1,~r_2=1),
\end{align}
we have shift parameters $r_i$ which affect the mode expansion of the Cartan currents $K_i^{+}(z)$.\footnote{There is
no shift for the Cartan currents $K_i^{-}(z)$.}
In the level zero representation $K_i^{\pm}(z)$ are mutually commuting and the super Macdonald polynomials 
$\mathcal{M}_{\Lambda}(x, \theta;q,t)$ are simultaneous eigenstates of the Hamiltonians $H_{i, \pm 1}$
(see Proposition \ref{eigenvalues} for explicit formulas of the eigenvalues).
The shifted anti-commutation relation implies that the first Hamiltonians $H_{i, \pm 1}$ are expressed 
as the anti-commutator of the appropriate modes of the currents $E(z)$ and $F(z)$;
\begin{align}
\label{H2-1-anti-com}
\left[ E_{2,0}, F_{2,-1} \right]_{+} &= K_{2,0}^{+} + K_{2,0}^{-}H_{2,-1}
= 1 -(t/q)^{\frac{1}{2}}u H_{2,-1}, \\
\label{H1-1-anti-com}
\left[ E_{1,-1}, F_{1,0} \right]_{+} &= K^{-}_{1,0} H_{1,-1} 
= -u^{-1} H_{1,-1}, \\
\label{H21-anti-com}
\left[ E_{2, +1}, F_{2,-1} \right]_{+} &= K_{2,0}^{+}H_{2,+1} - K_{2,0}^{-}
= (t/q)^{\frac{1}{2}}u (1+ u^{-1}(q/t)^{\frac{1}{2}}H_{2,+1}), \\
\label{H11-anti-com}
\left[ E_{1,0}, F_{1,2} \right]_{+} &= K_{1,0}^{+} H_{1,+1} = H_{1,+1}.
\end{align}

The generating function of the numbers of super partitions with a fixed $|\Lambda|$;
\begin{equation}
\sum_{\Lambda} q^{2|\Lambda|} =  \prod_{k=1}^\infty \frac{1+ q^{2k-1}}{1-q^{2k}},
\qquad |\Lambda| := \displaystyle{\sum_{i=1}^{\ell(\Lambda)}} \Lambda_i \in \mathbb{Z}/2,
\end{equation}
agrees with the character of the tensor product of the Fock space $\mathcal{F}_B$ 
of a free boson $a_n~(n \in \mathbb{Z})$ with the commutation relation
$\left[ a_n, a_m \right] = \delta_{n+m,0}$
and the Fock space $\mathcal{F}_F$ of a free (NS) fermions $\psi_r~(r \in \mathbb{Z} + \frac{1}{2})$ 
with the anti-commutation relation $\left\{ \psi_{r}, \psi_{s} \right\} = \delta_{r+s, 0}$.
Hence, defining the vacuum by $a_{n}\vert \varnothing \rangle= \psi_r \vert \varnothing \rangle=0~(n,r>0)$,
we have a one to one correspondence between super partitions and the states in the Fock space $\mathcal{F}_B \otimes \mathcal{F}_F$.
Explicitly, by separating a super partition $\Lambda$ to the odd part $\lambda^{\mathsf{a}}$
and the even part $\lambda^{\mathsf{s}}$;
\begin{equation}
\Lambda = (\lambda^{\mathsf{a}}; \lambda^{\mathsf{s}}) 
= (\lambda_1, \ldots, \lambda_m; \lambda_{m+1}, \ldots, \lambda_{m+\ell}),
\end{equation}
with $\lambda_1 > \lambda_2 \cdots > \lambda_m \geq 0,~
\lambda_{m+1} \geq \lambda_{m+2} \geq \cdots \geq \lambda_{m+\ell} > 0$,
we define
\begin{equation}
\mathcal{F}_B \otimes \mathcal{F}_F \ni \vert \Lambda \rangle := a_{-\lambda_{m+1}} \cdots a_{-\lambda_{m+\ell}}  \cdot
\psi_{-\lambda_1} \cdots  \psi_{-\lambda_{m}} \vert \varnothing \rangle. 
\end{equation}

As generators of the algebra of the supersymmetric polynomials we employ 
the bosonic and the fermionic power sum polynomials defined by
\begin{equation}
p_k := \sum_{i=1}^N x_i^k, \qquad \pi_k := \sum_{i=1}^N \theta_i x_i^{k-1}.
\end{equation}
In this paper we take the projective limit of the graded ring of the supersymmetric polynomials 
with $2N$ variables. Then we can assume that the generators $p_k$ and $\pi_k$ are all independent
and define a linear isomorphism between the space of supersymmetric polynomials and the Fock space 
$\mathcal{F}_B \otimes \mathcal{F}_F$ given by 
\begin{equation}
\vert \Lambda \rangle \longleftrightarrow 
p_{\Lambda}:= \pi_{\lambda_1+\frac{1}{2}} \cdots \pi_{\lambda_m +\frac{1}{2}} p_{\lambda_{m+1}} \cdots p_{\lambda_{m+\ell}}.
\end{equation}
Note that $p_{\Lambda}$, which are labelled by the set of super partitions, give a basis of the space of supersymmetric polynomials. 
The super Macdonald polynomials $\mathcal{M}_{\Lambda}(x, \theta;q,t)$ give another basis
which simultaneously diagonalizes the Cartan modes of the quantum toroidal algebra 
$\mathcal{U}_{q,t}(\widehat{\widehat{\mathfrak{gl}}}_{1|1})$. 
The Pieri rule of the super Macdonald polynomials follows from the action of the creation and the annihilation currents 
$E_i(z)$ and $F_i(z)$ of $\mathcal{U}_{q,t}(\widehat{\widehat{\mathfrak{gl}}}_{1|1})$. 
By expanding $\mathcal{M}_{\Lambda}(x, \theta;q,t)$ in terms of $p_{\Lambda}$,\footnote{See Appendix \ref{App:Lower-Mac}
for explicit examples.} we can express the Pieri rule by linear differential operators in $p_k$ and $\pi_k$.
It turns out that among the modes of the super currents $E_i(z)$ and $F_i(z)$ 
there is a particular mode that takes simple form;
\begin{con}\label{Pieri-con}
\begin{equation}
E_{1,0} = \pi_1, \qquad
E_{2,0} = \sum_{k=1}^\infty c_k[p] \frac{\partial}{\partial \pi_k},
\end{equation}
\begin{equation}
F_{1,+1} = \frac{\partial}{\partial \pi_1}, \qquad
F_{2,-1} = -\sum_{k=1}^\infty \pi_k \cdot \widetilde{c}_k \left[ \partial/ \partial p \right],
\end{equation}
where the generating functions for the coefficients are
\begin{equation}\label{c-gen}
\sum_{k=0}^\infty c_k[p] z^k = \exp \left(\sum_{r=1}^\infty \frac{1-t^r}{r} p_r z^r \right)
= \prod_{i=1}^\infty \frac{1-tu x_i}{1-ux_i},
\end{equation}
and
\begin{equation}\label{tc-gen}
\sum_{k=0}^\infty \widetilde{c}_k \left[ \partial/ \partial p \right] z^{-k}
= \exp \left( \sum_{n=1}^\infty (q^{-n}-1) \frac{\partial}{\partial p_n} z^{-n} \right).
\end{equation}
\end{con}
For the super currents $F_i(z)$ the particular mode is different from the zero mode,
which is related to the fact that the toroidal algebra is shifted. 
Conjecture \ref{Pieri-con} is the starting point of the computations in this paper. 
For example, from Conjecture \ref{Pieri-con} we can derive
\begin{equation}
 E_{1, -1} = u^{-1}\sum_{k=1}^\infty \pi_k \cdot \widetilde{c}_{k-1} \left[\partial/\partial p \right], \qquad
F_{1, 0} = u^{-1} \sum_{k=1}^\infty c_{k-1}[p] \frac{\partial}{\partial \pi_k}.
\end{equation}
Combining these formulas and \eqref{H2-1-anti-com}--\eqref{H1-1-anti-com}, we obtain the negative mode Hamiltonians 
\begin{equation}
\label{H2-bilinear}
1 -(t/q)^{\frac{1}{2}}u H_{2,-1}
= - \sum_{k=1}^\infty
\left( c_k[p] \widetilde{c}_k [\partial/\partial p]  +
 \sum_{k=1}^\infty \sum_{\ell=1}^\infty \widetilde{C}_{k, \ell}[p, \partial/ \partial p]
\pi_k \frac{\partial}{\partial \pi_\ell} \right),
\end{equation}
and 
\begin{equation}
\label{H1-bilinear}
 - u H_{1,-1} =  1 + \sum_{k=1}^\infty c_k[p] \widetilde{c}_k[\partial/\partial p] + 
\sum_{\ell=1}^\infty \sum_{k=1}^\infty \widetilde{C}_{k,\ell}[p, \partial/ \partial p]
 \pi_{k+1} \frac{\partial}{\partial \pi_{\ell+1}}.
\end{equation}
The coefficients of the fermion bi-linear terms are
\begin{align}
\widetilde{C}_{k, \ell}[p, \partial/ \partial p] &:= \Big[\widetilde{c}_k 
\left[ \partial/ \partial p \right] , c_\ell [p] \Big] \CR
 &=  (1-t)(q^{-1}-1) \sum_{n=\ell-k}^{\ell-1} [[\ell -n ]]_{(t,q^{-1})} 
 \cdot c_n[p] \cdot \widetilde{c}_{k-\ell+n} \left[ \partial/ \partial p \right] \CR
 &= (1-t)(q^{-1}-1) \sum_{m=0}^{k-1} [[ k-m ]]_{(t,q^{-1})} 
 \cdot c_{m+\ell-k}[p] \cdot \widetilde{c}_{m} \left[ \partial/ \partial p \right],
\end{align}
where $[[ n ]]_{(t,q^{-1})}$ is defined below. Note that $c_n[p]=0$ for $n \leq 0$. 
The bosonic parts of $H_{2,-1}$ and $H_{1,-1}$ are the same and agree with the $(q,t)$ inverted Hamiltonian $\iota(\hat{H}_1)$, 
where we substitute $(q,t) \to (q^{-1}, t^{-1})$ for the first Hamiltonian of \eqref{RM-Hamiltonian}. 
This gives a consistency check of Conjecture \ref{Pieri-con}. 

Though the eigenvalues of $H_{i, -1}$ and $H_{i,+1}$ are related by the involution of the parameters 
$\iota: (q,t) \to (q^{-1}, t^{-1})$ (see Proposition \ref{eigenvalues}), this does not mean
 $H_{i, -1}$ and $H_{1,+1}$ are related by the same involution. This is due to the fact that
 the super Macdonald polynomials are not invariant under the involution $\iota$
\begin{equation}
 \mathcal{M}_\Lambda(x, \theta;q^{-1}, t^{-1}) \neq  \mathcal{M}_\Lambda(x, \theta;q, t).
\end{equation}
This is in sharp contrast to the case of the Macdonald polynomials $P_\lambda(x; q,t)$ which are invariant 
under the involution $\iota$ and 
the original Hamiltonians $\hat{H}_r$ and the $(q,t)$-inverted $\iota(\hat{H}_s)$ are mutually commuting.
On the other hand, though the bosonic parts of $H_{i, -1}$ and $H_{i,+1}$ are in fact
related by the involution $\iota$, the fermionic parts are not. 
If we denote the $(q,t)$ inverted form of $H_{i,-1}$ by $H_{i,-1}^\vee$,
we do not have the simple relation $H_{i, +1} = H_{i, -1}^\vee$ any more
and $H_{i,-1}$ and $H_{i,-1}^\vee$ are not mutually commuting,
while  $H_{i,-1}$ and $H_{i,+1}$ should mutually commute.

Consequently, the computation of the positive mode Hamiltonians $H_{1,1}$ and $H_{2,1}$ gets more involved,
which may be related to the fact that the shift parameter $r_i$ affects $K_i^{+}(z)$, but not $K_i^{-}(z)$. 
To evaluate $H_{1,1}$ and $H_{2,1}$ by the anti-commutation relations \eqref{H21-anti-com} and  \eqref{H11-anti-com}, 
we need a formula for $E_{2,1}$ and $F_{1,2}$.
Motivated by the formula for $H_{1,1}$ and $H_{2,1}$ in \cite{Galakhov:2025phf}, we propose
the following integral representation of $E_{2,1}$ and $F_{1,2}$;
\begin{con}\label{Pieri-con2}
\begin{align}
u^{-1} (q/t)^{\frac{1}{2}} E_{2,1} &=
- \sum_{k=1}^\infty \oint \frac{dw}{w} w^{-2k} V_B^{(-)}(w) V_B^{(+)}(w)  
\langle \varnothing \vert \widetilde{V}_F^{(-)}(w) \widetilde{V}_F^{(+)}(w) \vert \varnothing\rangle_{F} q^{k} \frac{\partial}{\partial \pi_k},
\\
u^{-1} F_{1,2} &=
\sum_{k=1}^\infty\oint \frac{dw}{w}  w^{-2k+2} V_B^{(-)}(w) V_B^{(+)}(w) 
\langle \varnothing \vert \widetilde{V}_F^{(-)}(w) \widetilde{V}_F^{(+)}(w) \vert \varnothing\rangle_{F} q^{k-1} \frac{\partial}{\partial \pi_k}.
\end{align}
\end{con}
For the definitions of the vertex operators $V_B^{(\pm)}(w)$ and $\widetilde{V}_F^{(\pm)}(w)$, 
see section \ref{section;integral-formula}. The meaning of the vacuum expectation value is
also explained there. Though only $\frac{\partial}{\partial \pi_k}$ appears explicitly in the above formula,
the vacuum expectation value is expanded in terms of the fermion bi-linears $\pi_\ell \frac{\partial}{\partial \pi_m}$.
Consequently, if we expand $E_{2,1}$ and $F_{1,2}$ in fermionic variables
$\pi_k$, the expansion has any higher order terms. The Hamiltonians $H_{1,1}$ and $H_{2,1}$ also have the same property. 
We would like to emphasize that this is qualitatively different from $H_{1,1}$ and $H_{2,1}$ whose fermionic parts are bi-linear 
in fermionic variables (see \eqref{H2-bilinear} and \eqref{H1-bilinear}). 

Though the formulas for $E_{2,1}$ and $F_{1,2}$ in Conjecture \ref{Pieri-con2} look rather involved, 
they actually satisfy the required properties (see \eqref{EF-com} with $i\neq j$);
\begin{prp}\label{Prop-1.3}
Conjectures \ref{Pieri-con} and \ref{Pieri-con2} imply the anti-commutation relations
\begin{equation}
\left[ E_{2,1}, F_{1,0} \right]_{+} = \left[ E_{2,0} , F_{1,2} \right]_{+} = 0.
\end{equation}
\end{prp}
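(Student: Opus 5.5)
The plan is to compute both anti-commutators directly in the realization of Conjectures \ref{Pieri-con} and \ref{Pieri-con2}. Every operator involved is then an odd differential operator in the fermionic variables $\pi_k$, with coefficients that are bosonic operators in $p_k$ and $\partial/\partial p_k$. Throughout I use the basic relation $[\partial/\partial\pi_k, \pi_\ell]_+ = \delta_{k\ell}$. Two structural facts drive the argument. First, $F_{1,0} = u^{-1}\sum_k c_{k-1}[p]\,\partial/\partial\pi_k$ and $E_{2,0} = \sum_k c_k[p]\,\partial/\partial\pi_k$ contain only fermionic annihilators, with purely multiplicative bosonic coefficients. Second, $E_{2,1}$ and $F_{1,2}$ are linear combinations of operators of the form $B(w)\,G(w)\,\partial/\partial\pi_k$. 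Here $B(w)=V_B^{(-)}(w)V_B^{(+)}(w)$ is a bosonic vertex operator, and $G(w)=\langle\varnothing|\widetilde V_F^{(-)}(w)\widetilde V_F^{(+)}(w)|\varnothing\rangle_F$ is an even fermionic operator, built from bilinears $\pi_\ell\,\partial/\partial\pi_m$.

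\textbf{Step 1: reduce to a generating function.} I would write
\begin{equation*}
E_{2,0}=\oint\frac{dz}{z}\,\Big(\sum_k c_k[p] z^k\Big)\Big(\sum_k z^{-k}\partial/\partial\pi_k\Big)=\oint\frac{dz}{z}\,\Phi(z)\,\Psi^*(z),
\end{equation*}
and similarly $F_{1,0}=u^{-1}\oint \frac{dz}{z}\, z\,\Phi(z)\Psi^*(z)$. Here $\Phi(z)=\exp\sum_r \frac{1-t^r}{r}p_r z^r$ is a bosonic half-vertex operator and $\Psi^*(z)=\sum_k z^{-k}\partial/\partial\pi_k$. Since the operators $\partial/\partial\pi_k$ anticommute among themselves, the anti-commutator has two contributions:
\begin{itemize}
\item the bosonic commutator $[B(w),\Phi(z)]$, multiplied by $\partial/\partial\pi\,\partial/\partial\pi$ terms;
\item the fermionic term $[G(w),\Psi^*(z)]$, which is again linear in $\partial/\partial\pi$.
\end{itemize}

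\textbf{Step 2: bosonic part.} The operator $B(w)$ has the normal-ordered form (creation part)$\times$(annihilation part in $\partial/\partial p$). Moving $\Phi(z)$ through the annihilation part produces a scalar factor $f(z/w)$, a ratio of factors of the type $(1-q^a t^b z/w)$. This factor multiplies a product of two odd annihilators, which is antisymmetric in the two fermionic indices $k,\ell$.

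\textbf{Step 3: fermionic part.} I would compute $[G(w),\Psi^*(z)]$ using the explicit bilinear expansion of $\widetilde V_F^{(\pm)}$. Acting on $\Psi^*(z)$, the operator $G(w)$ should rescale $z$, or multiply $\Psi^*(z)$ by a rational function of $z/w$. This is the fermionic counterpart of the boson factor.

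\textbf{Step 4: show the total vanishes.} Collecting terms, each anti-commutator becomes a double contour integral of a kernel times an operator antisymmetric under $k\leftrightarrow\ell$, plus a linear term. I expect the kernel to combine into something symmetric in the relevant variables, or into a total difference, so that its residues cancel. The different powers $w^{-2k}q^k$ versus $w^{-2k+2}q^{k-1}$, and $z^k$ versus $z^{k-1}$, should be exactly what makes the two cases work: they match the shifts $r_1=-1$, $r_2=1$.

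\textbf{Main obstacle.} The hard step is Step 3 together with the contour bookkeeping in Step 4. The operator $G(w)$ contains arbitrary powers of fermion bilinears, so its commutator with $\partial/\partial\pi_k$ is a genuinely nonlinear computation. I would first verify the statement at low degree, acting on $p_\Lambda$ with $|\Lambda|\le 3$ using Appendix \ref{App:Lower-Mac}, to fix signs. I would then look for a closed form $G(w)\Psi^*(z)G(w)^{-1}=g(z/w)\Psi^*(z)$ and prove it by expanding $\widetilde V_F^{(\pm)}$ as exponentials of bilinears. Cancellation would then follow from the identity $g(z/w)f(z/w)=$ (a function with no poles inside the contour), or one invariant under $z\to w^2/z$, so that the contour integral vanishes.
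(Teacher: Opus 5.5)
There is a genuine gap. Steps 3--4 are the entire proof, and the mechanism you propose for them does not exist.

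Your bookkeeping in Steps 1--2 is fine. Write $\partial_{\pi_k}=\partial/\partial\pi_k$, $B=V_B^{(-)}V_B^{(+)}$ and $G=\langle\varnothing|\widetilde V_F^{(-)}\widetilde V_F^{(+)}|\varnothing\rangle_F$. Then $[BG\,\partial_{\pi_k},\,c\,\partial_{\pi_\ell}]_+=[B,c]\,G\,\partial_{\pi_k}\partial_{\pi_\ell}+c\,B\,[\partial_{\pi_\ell},G]\,\partial_{\pi_k}$. Note that the second contribution also carries two annihilators; it is not a ``linear term''.

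The closed form $G(w)\Psi^*(z)G(w)^{-1}=g(z/w)\Psi^*(z)$ is false, with or without a rescaling of $z$:
\begin{itemize}
\item Since $\widetilde V_F^{(\pm)}$ contain neither $\pi_1$ nor $\partial_{\pi_1}$, conjugation by $G$ fixes $\partial_{\pi_1}$.
\item Matching the coefficient of $z^{-1}$ then kills every positive-power coefficient of $g$. The coefficient of $z^{-2}$ could therefore only be a combination of $\partial_{\pi_2}$ and $\partial_{\pi_1}$.
\item But $G(w)$ contains the bilinear term $(1-t^{-1})(q-1)\sum_{\ell\ge2}q^{\ell-2}w^{4-2\ell}\,\pi_2\,\partial_{\pi_\ell}$. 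Hence the $\pi$-free part of $G\,\partial_{\pi_2}G^{-1}$ contains $\partial_{\pi_\ell}$ for every $\ell\ge3$.
\end{itemize}
The underlying reason is that the coefficient matrix of $G$ is not Toeplitz, because of the cutoffs $k,\ell\ge2$ and $m\le\min(k,\ell)-2$.

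Nor is the cancellation a pole-free or symmetric-kernel effect. Take the $\pi$-free part of $[u^{-1}(q/t)^{1/2}E_{2,1},\,uF_{1,0}]_+$ and look at the coefficient of $\partial_{\pi_1}\partial_{\pi_2}$, with $C_\ell$ as in \eqref{C-def}:
\begin{itemize}
\item The boson commutator gives $-q[C_1,c_1]=qt(1-t^{-1})(q-1)C_2\neq0$.
\item The bilinear part of $G$ gives $qt(1-t^{-1})(q-1)\sum_{j\ge1}t^{-j}c_jC_{2-j}$.
\end{itemize}
These cancel only because $\sum_{n\ge0}t^{-n}c_nC_{2-n}=0$ (Lemma \ref{C-lemma}). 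That is, the cancellation rests on $\sum_k t^{-k}c_k[p]z^k$ being the inverse of $\sum_\ell c^\vee_\ell[p]z^\ell$, which is relation \eqref{bi-linear}. Your plan gives no way to organize this cancellation to all orders in the fermion bilinears.

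The missing idea is to avoid pairing the nonlinear $G(w)$ with two annihilators at all. By Lemma \ref{lemma-EF}, up to the constant $(q^{1/2}-q^{-1/2})(t^{1/2}-t^{-1/2})$, $E_{2,1}=[H_{1,+1},E_{2,0}]$ and $F_{1,2}=[H_{2,+1},F_{1,1}]$. Here $H_{i,+1}$ are the explicit operators \eqref{H_1+-formula} and \eqref{H_2+-formula}. The super Jacobi identity then gives $[[H_{1,+1},E_{2,0}],F_{1,0}]_+=[H_{1,+1},[E_{2,0},F_{1,0}]_+]-[E_{2,0},[H_{1,+1},F_{1,0}]]_+$, and likewise with the triple $(H_{2,+1},F_{1,1},E_{2,0})$. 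The right-hand sides vanish term by term:
\begin{itemize}
\item $[E_{2,0},F_{1,0}]_+$ and $[F_{1,1},E_{2,0}]_+$ vanish trivially, since each operator is linear in $\partial_\pi$ with multiplicative coefficients.
\item $[H_{1,+1},F_{1,0}]$ and $[H_{2,+1},E_{2,0}]$ vanish by Proposition \ref{Prop:commutativity+}.
\end{itemize}
In this route $G(w)$ is only ever commuted with operators linear in the fermions. The cancellation then reduces to reordering triple sums together with Lemma \ref{C-lemma}.
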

%%%
Moreover, the positive mode Hamiltonians $H_{1,1}$ and $H_{2,1}$ are generated 
by the following anti-commutators (see \eqref{H21-anti-com} and \eqref{H11-anti-com}); 
\begin{prp}\label{Prop-1.4}
\begin{align}
\left[ E_{2, 1}, F_{2,-1} \right]_{+} &= (t/q)^{\frac{1}{2}}u (1+ u^{-1}(q/t)^{\frac{1}{2}}H_{2,+1}), \\
\left[ E_{1,0}, F_{1,2} \right]_{+} &= H_{1,+1},
\end{align}
where $H_{2,+1}$ and $H_{1,+1}$ are Hamiltonians obtained in  \cite{Galakhov:2025phf}.
\end{prp}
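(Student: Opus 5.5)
The plan is to compute both anticommutators directly from the operator formulas of Conjectures \ref{Pieri-con} and \ref{Pieri-con2}. Then I compare the results with the explicit Hamiltonians $H_{2,+1}$ and $H_{1,+1}$ of \cite{Galakhov:2025phf}, rewritten in the variables $p_k,\pi_k$. I would do the easier identity $\left[ E_{1,0}, F_{1,2} \right]_{+} = H_{1,+1}$ first.

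\textbf{The identity for $H_{1,+1}$.} Since $E_{1,0}=\pi_1$ is a multiplication operator, the anticommutator with $F_{1,2}$ comes from two sources. One is the canonical relation $\left[\pi_1,\partial/\partial\pi_k\right]_+=\delta_{k,1}$. The other is the contribution of $\pi_1$ passing through the fermionic vacuum expectation value $\langle \varnothing \vert \widetilde{V}_F^{(-)}(w) \widetilde{V}_F^{(+)}(w) \vert \varnothing\rangle_{F}$. That expectation value is a generating series in the fermion bilinears $\pi_\ell\,\partial/\partial\pi_m$, so it is even. Hence $\pi_1$ commutes with it, up to the terms where $\partial/\partial\pi_1$ inside it hits $\pi_1$.

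Organising this gives
\[
\left[ E_{1,0}, F_{1,2} \right]_{+} = u\oint \frac{dw}{w}\, V_B^{(-)}(w) V_B^{(+)}(w)\, \mathcal{G}(w) ,
\]
where $\mathcal{G}(w)$ is the fermionic generating function, suitably reordered. I would then normal order $V_B^{(-)}V_B^{(+)}$, whose product has no contraction because both factors sit at the same point $w$. I would also write the fermionic factor as a determinant-like or exponential expression in $\pi_\ell\,\partial_{\pi_m}$ and compare it term by term with the integral (constant-term) expression for $H_{1,1}$ in \cite{Galakhov:2025phf}. Since that formula motivated Conjecture \ref{Pieri-con2}, this comparison should amount to matching normalisations ($u$, powers of $q$) and the shift $w^{-2k+2}$ against $\pi_1$.

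\textbf{The identity for $H_{2,+1}$.} Here $F_{2,-1}=-\sum_k \pi_k\,\widetilde{c}_k[\partial/\partial p]$ has two kinds of interaction with $E_{2,1}$:
\begin{itemize}
\item the bosonic action of $\widetilde{c}_k[\partial/\partial p]$, a shift operator $p_n\mapsto p_n+(q^{-n}-1)z^{-n}$ via \eqref{tc-gen}, on $V_B^{(\pm)}(w)$;
\item the fermionic contraction of $\pi_k$ with $\partial/\partial\pi_j$ and with the bilinears in the expectation value.
\end{itemize}
I would assemble $\sum_k \pi_k\widetilde{c}_k z^{-k}$ into a vertex operator $\oint \frac{dz}{z}\,\pi(z)\, \exp(\sum (q^{-n}-1)z^{-n}\partial_{p_n})$, with $\pi(z)=\sum_k\pi_k z^k$. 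Commuting it past $V_B^{(-)}(w)$ then produces a rational factor of the form $\prod$ $(1-tw/z)/(1-w/z)$-type ratios, read off from \eqref{c-gen}. This turns the anticommutator into a double contour integral.

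The $\delta_{jk}$ part of the fermionic anticommutator collapses the double integral to a residue at $z=w$-type poles. That residue reproduces $K^+_{2,0}H_{2,+1}$, which is the Galakhov et al.\ operator. The constant term $-K^-_{2,0}=(t/q)^{1/2}u$ comes from the $z\to\infty$ (or $0$) contribution.

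\textbf{Main obstacle.} The hardest step is the contribution of $\pi_k$ anticommuting through $\langle \widetilde{V}_F^{(-)}\widetilde{V}_F^{(+)}\rangle_F$, which contains all higher fermion powers. I would first try to show that this expectation value is a fermionic coherent-state kernel, i.e.\ an exponential of a bilinear. Then $\pi(z)$ conjugates through it by a linear substitution $\pi(z)\mapsto \pi(z)+(\text{kernel})\cdot\partial_\pi$. The extra terms would then combine with the bosonic residue into the same structure as in \cite{Galakhov:2025phf}. As a fallback, I would verify the identity on the low-degree super Macdonald polynomials of Appendix \ref{App:Lower-Mac}, checking the eigenvalues of Proposition \ref{eigenvalues}, and use that to fix any remaining normalisation.
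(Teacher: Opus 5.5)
Your argument for $[E_{1,0},F_{1,2}]_+=H_{1,+1}$ is correct but more elaborate than it needs to be. The operators $\widetilde V_F^{(\pm)}(w)$ involve only $\pi_k$ and $\partial/\partial\pi_k$ with $k\ge 2$. So the kernel $\langle\varnothing\vert\widetilde V_F^{(-)}\widetilde V_F^{(+)}\vert\varnothing\rangle_F$ simply commutes with $\pi_1$; there are no terms in which $\partial/\partial\pi_1$ hits $\pi_1$. Only the $k=1$ term of $F_{1,2}$ survives, and the result is \eqref{H_1+-formula} verbatim, with no reordering or term-by-term comparison needed.

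For $[E_{2,1},F_{2,-1}]_+$ there is a genuine gap, and your central claim about the $\delta_{jk}$ term is wrong. Your split into a bosonic commutator part and fermionic contractions is the same as the paper's. The $\delta_{jk}$ contribution is
$\sum_{n\ge1}\oint\frac{dw}{w}\,w^{-2n}q^n\,V_B^{(-)}V_B^{(+)}\,\widetilde c_n[\partial/\partial p]\,\langle\varnothing\vert\widetilde V_F^{(-)}\widetilde V_F^{(+)}\vert\varnothing\rangle_F$.
Since $\sum_{n\ge0}q^nw^{-2n}\widetilde c_n=V_B^{(+)}(w)^{-1}$ (the generating form of the second relation in \eqref{bi-linear}), this equals $\oint\frac{dw}{w}V_B^{(-)}(w)\bigl(1-V_B^{(+)}(w)\bigr)\langle\varnothing\vert\widetilde V_F^{(-)}\widetilde V_F^{(+)}\vert\varnothing\rangle_F$.

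Its Hamiltonian part is therefore $-\oint\frac{dw}{w}V_B^{(-)}V_B^{(+)}\langle\widetilde V_F^{(-)}\widetilde V_F^{(+)}\rangle_F=-u^{-1}H_{1,+1}$. This is built from the tilded kernel. The target $1+u^{-1}(q/t)^{1/2}H_{2,+1}$ instead contains $\langle\varnothing\vert V_F^{(-)}V_F^{(+)}\vert\varnothing\rangle_F$, by \eqref{H_2+-formula}. So the $\delta_{jk}$ residue does not reproduce the operator of \cite{Galakhov:2025phf}.

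Two smaller points about your contour picture. First, the pole at $z=w^2/q$, where $\widetilde c(z)$ becomes $V_B^{(+)}(w)^{-1}$, produces the $V_B^{(-)}$-only piece that carries the constant. The large-$z$ part produces the Hamiltonian piece. This is the reverse of your assignment. Second, moving $\widetilde c(z)$ past $V_B^{(-)}(w)$ gives the factor $\frac{(1-w^2/z)(1-(qt)^{-1}w^2/z)}{(1-q^{-1}w^2/z)(1-t^{-1}w^2/z)}$, which is not read off from \eqref{c-gen}.

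The whole content of the proposition therefore sits in the step you only assert, that the extra terms ``combine into the same structure''. Two things are needed.
\begin{enumerate}
\item Show that the bosonic terms from $[\widetilde c_n,V_B^{(-)}(w)]$, together with the contractions of $\pi_n$ against the $\partial/\partial\pi$'s inside the kernel, sum after exchanging the order of summation to exactly $-(q-1)(1-t^{-1})\sum_{k,\ell}\oint\frac{dw}{w}w^{2(\ell-k)}V_B^{(-)}V_B^{(+)}q^{k-1}t^{1-\ell}\pi_\ell\langle\widetilde V_F^{(-)}\widetilde V_F^{(+)}\rangle_F\frac{\partial}{\partial\pi_k}$. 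This is the content of Lemma \ref{Lemma6}.
\item Invoke \eqref{H_1andH_2}, obtained by splitting off the $\psi_0,\psi_0^\dagger$ modes. It says precisely that this dressing converts the tilded kernel into $\langle V_F^{(-)}V_F^{(+)}\rangle_F$.
\end{enumerate}

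Your coherent-state observation is a legitimate way to compute the contraction terms. The kernel is a normal-ordered exponential of a fermion bilinear, so $\pi_n$ does conjugate through it linearly. But it does not supply this $\psi_0$-mode bridge between the two kernels. Checking low levels from Appendix \ref{App:Lower-Mac} cannot substitute for it either.
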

%%%
%
The quantum toroidal algebra $\mathcal{U}_{q,t}(\widehat{\widehat{\mathfrak{gl}}}_{1|1})$ is invariant under the involution $\iota$. 
But the level zero representation that gives the super Macdonald polynomials $\mathcal{M}_\Lambda(x, \theta; q,t)$ is
asymmetric for $\iota$. We believe this is closely related to the fact that the algebra is shifted. 
In \cite{Alarie-Vezina:2019ohz} the operator $\mathsf{T}_q$ that satisfies 
\begin{equation}
\mathsf{T}_q \mathcal{M}_{\Lambda}(x, \theta;q,t) = q^{|\Lambda^{\mathsf{a}}|} \mathcal{M}_{\Lambda}(x, \theta;q^{-1},t^{-1}),
\end{equation}
was introduced, where $|\Lambda^{\mathsf{a}}|$ denotes the total number of boxes in the odd rows. 
 The operator $\mathsf{T}_q$ was defined as a linear operator on each fermion sector 
in the space of supersymmetric polynomials. It is desirable to work out a realization of $\mathsf{T}_q$
as an involution of the quantum toroidal algebra. We leave it to a future work.

The paper is organized as follows;
in section 2 we review the shifted quantum toroidal algebra of type $\mathfrak{gl}_{1\vert 1}$. 
The super Macdonald polynomials are identified with a basis which diagonalizes the mutually commuting Cartan modes 
in the super Fock representation. We work out the generating functions of the 
eigenvalues. By looking at the action of the super charges of the shifted quantum toroidal algebra, 
we present the Pieri formula of the super Macdonald polynomials in section 3. The validity of our conjecture is checked
for lower levels of  the super Macdonald polynomials. In section 4 we obtain the first pair of negative mode Hamiltonians 
by computing the anti-commutator of the super charges. We also make comparisons with the results in the literatures 
\cite{Alarie-Vezina:2019ohz} and \cite{Galakhov:2025phf}. As we explained above, the positive modes of the Hamiltonians are more involved. 
Some of the leading terms are computed in section 5, based on the Pieri rule for the higher modes 
of the super currents $E_i(z)$ and $F_i(z)$. 
Finally motivated with the integral formula of the Hamiltonians in  \cite{Galakhov:2025phf},
we propose the corresponding integral formula for the super charges $E_{2,1}$ and $F_{1,2}$ and discuss some of the consequences. 
In particular we prove Propositions \ref{Prop-1.3} and \ref{Prop-1.4}.

There are several appendices to this paper. 
In appendix A we provide a list of the super Macdonald polynomials up to level 4. 
We have explicitly checked the validity of our conjecture 
based on these data. In appendix B, we work out the Pieri rule of the higher modes. In particular the Pieri rule for the positive modes
is used to compute the lower term of the expansion of positive Hamiltonians in  fermionic derivatives. 
In Appendix C we review the integral formula for the positive Hamiltonians proposed by \cite{Galakhov:2025phf}
and show that they commute with the appropriate super charges. 
This is a consistency check of the results in \cite{Galakhov:2025phf}
and plays a crucial role in the related computations in section 6. 
Finally in Appendix D, we review the basic properties of the involution 
operator $\mathsf{T}_q$ introduced in \cite{Alarie-Vezina:2019ohz}. 

%%%%%%%%%%%%%%%%%%%%%%%%%%%%%%%%%%%%%%%%%%%%%%%%%%%%%%%%%%%%%%%%%%%%%%%%%%%%%%%%%%%%%%%%%

\subsection{Some notation}

It is convenient to introduce $[[n]]_{(t_1,t_2)}$ in terms of the generating function.
\begin{equation}\label{qt-integer1}
\sum_{n=1}^\infty [[n]]_{(t_1,t_2)} z^{n-1} =\frac{1}{(1-t_1z)(1-t_2z)} .
\end{equation}
Since
\begin{equation*}
\frac{1}{(1-t_1z)(1-t_2z)} = \frac{1}{(t_1-t_2)z} \left( \frac{1}{1-t_1z} - \frac{1}{1-t_2z}  \right) 
= \frac{1}{t_1 - t_2} \sum_{n=1}^\infty (t_1^n -t_2^n) z^{n-1},
\end{equation*}
we have
\begin{equation}\label{qt-integer2}
[[n]]_{(t_1,t_2)} = \frac{t_1^n - t_2^n}{t_1 - t_2}.
\end{equation}
Note that
\begin{equation}\label{qt-integer3}
[[n]]_{(t_1,t_2)} = \sum_{k=0}^{n-1} t_1^k t_2^{n-k-1} =t_2^{n-1} \sum_{k=0}^{n-1} (t_1/t_2)^k. 
\end{equation}

%%%%%%%%%%%%%%%%%%%%%%%%%%%%%%%%%%%%%%%%%%%%%%%%%%%%%%%%%%%%

\section{Shifted quantum toroidal algebra of type $\mathfrak{gl}_{1\vert 1}$}
\label{section:shifted-algebra}

The shifted quantum toroidal algebra $\mathcal{U}_{q,t}(\widehat{\widehat{\mathfrak{gl}}}_{1|1})$
of type $\mathfrak{gl}_{1\vert 1}$ is generated by $E_{i,k}, F_{i,k}, K_{i, \pm r}^{\pm}$
and a central element $C$, where $i \in \{1,2 \}, k \in \mathbb{Z}$ and $r \in \mathbb{Z}_{\geq 0}$. 
The algebra is $\mathbb{Z}_2$-graded. The generators $E_{i,k}, F_{i,k}$ are odd and other generators are even.
To write down the defining relations of the algebra it is convenient to introduce the generating currents;
\begin{align}\label{EF-modes}
E_i(z) &= \sum_{k \in \mathbb{Z}} E_{i,k} z^{-k}, \qquad F_i(z) = \sum_{k \in \mathbb{Z}} F_{i,k} z^{-k}, \\
K_i^{\pm}(z) &= \sum_{r \geq 0} K_{i, \pm r}^{\pm} z^{\mp r}
= K_{i,0}^{\pm} \exp 
\left( \pm \sum_{r=1}^\infty H_{i, \pm r}z^{\mp r} \right).
\end{align}
One of the canonical ways to write down the commutation relations of the quantum toroidal algebra 
is based on the deformed Cartan matrix.
However, for the Lie super algebra $\mathfrak{gl}_{1\vert 1}$ the Cartan matrix vanishes and this does not work.
We employ the idea of the quiver quantum toroidal algebra to deduce the commutation relations 
\cite{Galakhov:2021xum, Galakhov:2021vbo, Noshita:2021dgj},
which are given by\footnote{Usually we impose $K_{i,0}^{+} K_{i,0}^{-} = K_{i,0}^{-} K_{i,0}^{+} =1$ 
for the zero modes of the unshifted algebra. But in the shifted case we do not assume it.
There are also Serre relations, which we omit in this paper.}
\begin{align}\label{K-zero-mode}
K_i^{\pm}(z) K_j^{\pm}(w) &= K_j^{\pm}(w) K_i^{\pm}(z),
\\
\label{KK-com}
K_i^{-}(z) K_j^{+}(w) &= \frac{\varphi^{j \Rightarrow i}(z,Cw)}{\varphi^{j \Rightarrow i}(Cz,w)}
K_j^{+}(w) K_i^{-}(z),
\\
\label{KE-com}
K_i^{\pm}(C^{\frac{1\mp 1}{2}}z) E_j(w) &= \varphi^{j \Rightarrow i}(z,w)  
E_j(w) K_i^{\pm}(C^{\frac{1\mp 1}{2}}z), 
\\
\label{KF-com}
K_i^{\pm}(C^{\frac{1\pm 1}{2}}z) F_j(w) &= \varphi^{j \Rightarrow i}(z,w)^{-1}  
F_j(w) K_i^{\pm}(C^{\frac{1\pm 1}{2}}z),
\\
\label{shifted}
\left[ E_i(z), F_j(w) \right]_{+} &= \delta_{ij} \left[ \delta \left( \frac{Cw}{z} \right) z^{r_i} K_i^{+}(z) - 
\delta \left( \frac{Cz}{w} \right)K_i^{-}(w) \right], 
\\
E_i(z) E_j(w) &= (-1) \cdot \varphi^{j \Rightarrow i}(z,w)  E_j(w) E_i(z),  
\\
F_i(z) F_j(w) &= (-1) \cdot \varphi^{j \Rightarrow i}(z,w)^{-1}  F_j(w) F_i(z), 
\end{align}
where $r_i \in \mathbb{Z}$, which only appears in \eqref{shifted}, is the shift parameter.
The structure function $\varphi^{i \Rightarrow j}(z,w)$ is defined as follows;
\beqa\label{bond1}
 \varphi^{1 \Rightarrow 1}(z,w)  &=&  \varphi^{2\Rightarrow 2}(z,w)  = 1, 
 \\
 \varphi^{1 \Rightarrow 2}(z,w) &=&   \varphi^{2 \Rightarrow 1}(z,w)^{-1} \CR
 &=& \frac{\phi(q_2;z,w)  \phi(q_2^{-1};z,w)}{\phi(q_1;z,w)  \phi(q_1^{-1};z,w)}
 = \frac{(z-q_2 w)(z - q_2^{-1}w)}{(z-q_1 w)(z - q_1^{-1}w)},
\eeqa
where we introduce the notation\footnote{In our previous paper \cite{Kanno:2025ifd},
we employed the symmetric form $\phi(p;z,w) := (p^{\frac{1}{2}}z - p^{-\frac{1}{2}}w)$.
But it turns out that the Pieri formula will become simpler with \eqref{phi-def}.}
\begin{equation}\label{phi-def}
\phi(p;z,w) := (z - p^{-1}w).
\end{equation}

The commutation relations of the currents $K_i^{\pm}(z)$ are reproduced by 
the commutation relation of their modes
\begin{equation}
\left[ H_{i,r}, H_{j,s} \right] = \epsilon_{ij} \delta_{r+s, 0} \frac{C^r - C^{-r}}{r}
 (q^{\frac{r}{2}} - q^{-\frac{r}{2}})(t^{\frac{r}{2}} - t^{-\frac{r}{2}}),
\end{equation}
where
\begin{equation}
q_1^2 = q t^{-1}, \qquad q_2^2 = qt.
\end{equation}
In fact we obtain
\begin{align}
K_j^{+}(w) K_i^{-}(z) &= \exp \left( \epsilon_{ij} \sum_{r=1}^\infty \frac{1}{r}(C^r-C^{-r})
 (q^{\frac{r}{2}} - q^{-\frac{r}{2}})(t^{\frac{r}{2}} - t^{-\frac{r}{2}})
\left(\frac{z}{w}\right)^r \right)  K_i^{-}(z) K_j^{+}(w) \CR
&= \left[\frac{(1- q_1 (Cz/w) )(1- q_1^{-1}(Cz/w))}{(1- q_2 (Cz/w) )(1- q_2^{-1} (Cz/w) )}\right]^{\epsilon_{ij}} \CR
& \qquad \qquad \times \left[(Cz,w) \to (z,Cw) \right]^{-\epsilon_{ij}} K_i^{-}(z) K_j^{+}(w),
\end{align}
which agrees with \eqref{KK-com}. 

We can also check that the commutation relations with the modes of $E_i(w)$ and $F_i(w)$;
%\begin{align}
%\left[ H_{i,r}, E_j(w) \right] &= +\epsilon_{ij} \frac{C^{\frac{1}{2}(r-|r|)}}{r}
%(q^{\frac{r}{2}} - q^{-\frac{r}{2}})(t^{\frac{r}{2}} - t^{-\frac{r}{2}}) w^{r}E_j(w), \label{HE-com}
%\\
%\left[ H_{i,r}, F_j(w) \right] &= - \epsilon_{ij}  \frac{C^{\frac{1}{2}(r+|r|)}}{r}
%(q^{\frac{r}{2}} - q^{-\frac{r}{2}})(t^{\frac{r}{2}} - t^{-\frac{r}{2}}) w^{r}F_j(w), \label{HF-com}
%\end{align}
%Substituting the mode expansion \eqref{EF-modes} to \eqref{HE-com} and \eqref{HF-com}, we have 
\begin{align}
\left[ H_{i,r}, E_{j,s} \right] &= +\epsilon_{ij} \frac{C^{\frac{1}{2}(r-|r|)}}{r}
 (q^{\frac{r}{2}} - q^{-\frac{r}{2}})(t^{\frac{r}{2}} - t^{-\frac{r}{2}}) E_{j,r+s}, 
\\
\left[ H_{i,r}, F_{j,s} \right] &= - \epsilon_{ij}  \frac{C^{\frac{1}{2}(r+|r|)}}{r} 
(q^{\frac{r}{2}} - q^{-\frac{r}{2}})(t^{\frac{r}{2}} - t^{-\frac{r}{2}})F_{j,r+s},
\end{align}
recover \eqref{KE-com} and \eqref{KF-com}.

The algebra is a Hopf superalgebra with the following coproduct;\footnote{For the definition of the counit and the antipode see \cite{Noshita:2021dgj}.}
\begin{align}
\Delta E_i(z) &= E_i(z) \otimes 1 + K_i^{-}(C_1 z ) \otimes E_i(C_1 z), 
\\
\Delta F_i(z) &= F_i(C_2z) \otimes K_i^{+}(C_2 z) + 1 \otimes F_i(z), 
\\
\Delta K_i^{+} &= K_i^{+}(z) \otimes K_i^{+}(C_1^{-1}z),
\\
\Delta K_i^{-} &= K_i^{-}(C_2^{-1} z) \otimes K_i^{-}(z),
\\
\Delta C &= C \otimes C,
\end{align}
where $C_1 = C \otimes 1$ and $C_2 = 1 \otimes C$. 

%%%%%%%%%%%%%%%%%%%%%%%%%%%%%%%%%%%%%%%%%%%%%%%%%%%%%%%%%%%%%%%%%%%%%%%%%%%%%%%%%%%%%%%%%%%%%%%%%%%%%%%%%%%

\subsection{Level zero representations}

When $C=1$, we say the representation has level zero. In this case all the Cartan modes $K_{i,\pm r}^\pm$ are
commuting and there is a basis consisting of simultaneous eigenvectors of $K_{i,\pm r}^\pm$.
A basic example of the level zero representation is the vector representation $V(u)$ with the spectral parameter $u$. 
Let us introduce the vector space spanned by $[u]_{j, \barsigma}$, where $ j \in \mathbb{Z}, \barsigma \in \mathbb{Z}_2 = \{ 0, 1\}$.
It is convenient to identify $E_2(z) \equiv E_0(z)$ and similarly for other currents
and to use the notation $\overline{s}$ when $s$ is regarded as an element in $\mathbb{Z}_2$. 
With this convention the representation $V(u)$ is defined as follows;
\begin{align}
E_{\overline{s}}(z) \cdot [u]_{k, \barsigma} &= \mathcal{E}_{\overline{s}}~
\delta \left( \frac{z}{u q_1^{k+1} q_2^{k+1 - \barsigma}} \right) \overline{\delta}_{\overline{s}+\barsigma, 1} 
[u]_{k+\overline{s},1-\barsigma},
\\
F_{\overline{s}}(z) \cdot [u]_{k, \overline{\sigma}} &= \mathcal{F}_{\overline{s}}~
\delta \left( \frac{z}{u q_1^{k+1-\overline{s}} q_2^{k}} \right) \overline{\delta}_{\overline{s}+\barsigma, 0}
[u]_{k-\overline{s},1-\barsigma},
\\
K_{\overline{s}}^{\pm}(z) \cdot [u]_{k, \barsigma} 
&= \left[ \Psi^{({\overline{s}})}_{[u]_{k, \barsigma}}(z) \right]_{\pm} [u]_{k, \barsigma},
\end{align}
where
\begin{align}
\label{Cartan-eigen1a} 
\Psi^{(\overline{1})}_{[u]_{k, \barsigma}}(z) 
&= \frac{\phi(q_1^{-k-2+\barsigma} q_2^{-k+\barsigma}; z,u)}
{\phi(q_1^{-k-1+\barsigma} q_2^{-k-1+\barsigma}; z,u)},
\\
\label{Cartan-eigen2a} 
\Psi^{(\overline{2})}_{[u]_{k, \barsigma}}(z)
&= \frac{\phi(q_1^{-k} q_2^{-k-1};z,u)}{\phi(q_1^{-k-1} q_2^{-k};z,u)}.
\end{align}
For a rational function $g(z)$, $[g(z)]_\pm$ denotes the expansion around $z=\infty, 0$.

The normalization factors $\mathcal{E}_{\overline{s}}$ and $\mathcal{F}_{\overline{s}}$ are fixed
by the commutation relation \eqref{shifted}.
By using the formula
\begin{equation}
\left[ \frac{z - \beta u}{z - \alpha u} \right]_{+} - \left[ \frac{z - \beta u}{z - \alpha u} \right]_{-}
= \left( 1- \frac{\beta}{\alpha} \right) \delta\left( \frac{z}{\alpha u} \right),
\end{equation}
we obtain
\begin{equation}
\mathcal{E}_1 \mathcal{F}_1 = 1- q_1 q_2^{-1}= 1 - t^{-1},
\qquad
\mathcal{E}_2 \mathcal{F}_2 = 1- q_1^{-1} q_2= 1 -t.
\end{equation}
For later convenience we choose
\begin{equation}
\mathcal{E}_1 = 1, \quad \mathcal{F}_1 = 1-t^{-1},
\qquad \mathcal{E}_2 = 1-t, \quad \mathcal{F}_2 = 1.
\end{equation}
The vector representation is unshifted. Namely we have the commutation relation \eqref{shifted} with $r_i=0$.

Let us consider $N$-times tensor product $V(u_1) \otimes V(u_2) \otimes \cdots \otimes V(u_N)$ of the vector 
representations with spectral parameters $u_r = (q_1 q_2^{-1})^{r-1} u$. 
We assign $(\lambda_i, \overline{\sigma}_i),~\lambda_i \in \mathbb{Z}_{\geq 0},
\overline{\sigma}_i \in \mathbb{Z}_2$ for the $i$-th component.
The states in the tensor product are denoted 
\begin{equation}
\vert \lambda, \overline{\sigma} \rangle := \prod_{i=1}^N [(q_1q_2^{-1})^{i-1} u ]_{\lambda_i -1, \overline{\sigma}_i}
\in V(u_1) \otimes V(u_2) \otimes \cdots \otimes V(u_N). 
\end{equation}
For the tensor product of the level zero representations we use the (Drinfeld) coproduct of the algebra;
\begin{align}
\Delta^{N-1} (E_{\overline{s}}(z)) &= \sum_{k=1}^N \overbrace{K^{-}_{\overline{s}}(z) 
\otimes \cdots \otimes K^{-}_{\overline{s}}(z)}^{k-1} \otimes E_{\overline{s}}(z) \otimes 
\overbrace{1 \otimes \cdots \otimes 1}^{N-k}, \label{copro-E} \\
\Delta^{N-1} (F_{\overline{s}}(z)) &= \sum_{k=1}^N \overbrace{1 \otimes \cdots \otimes 1}^{k-1}
 \otimes F_{\overline{s}}(z) \otimes \overbrace{K^{+}_{\overline{s}}(z) 
 \otimes \cdots \otimes K^{+}_{\overline{s}}(z)}^{N-k}, \label{copro-F} \\
 \Delta^{N-1} (K^{\pm}_{\overline{s}} (z)) &= \overbrace{K^{\pm}_{\overline{s}} (z) \otimes 
 \cdots \otimes K^{\pm}_{\overline{s}}(z)}^{N}. \label{copro-K}
\end{align}
With the help of the coproduct, we can construct the super Fock representation 
by taking an infinite tensor product of the vector representations.
There is a basis of the super Fock representation which is labelled by the set of super Young diagrams.
By using the fact that the adjacent spectral parameters are shifted as $u_r = (q_1 q_2^{-1})^{r-1} u$,
we can check that the subspace spanned by the states $\vert \lambda, \overline{\sigma} \rangle$ corresponding 
to a super partition is invariant under the action of $E_{\overline{s}}(z)$ and $F_{\overline{s}}(z)$.
In particular, the state $\vert \varnothing, \overline{0} \rangle$ corresponding to the empty partition is
the highest weight state annihilated by $F_{\overline{s}}(z)$.

From the coproduct \eqref{copro-E} we obtain the action of $E_{\overline{s}}(z)$ as follows;
\begin{align}\label{E-action}
E_{\overline{s}}(z) \cdot \vert \lambda, \barsigma \rangle 
&= \mathcal{E}_{\overline{s}} \sum_{k=1}^{\ell(\lambda)+1}
 (-1)^{F(k)} \cdot \overline{\delta}_{s + \barsigma_k,1} 
\prod_{i=1}^{k-1} \left[  \Psi^{(s)}_{[u (q_1 q_2^{-1})^{i-1}]_{\lambda_i -1, \overline{\sigma_i}}}(z) \right]_{-}
\CR
& \qquad \times 
 \delta\left(\frac{z}{u (q_1q_2^{-1})^{k-1} q_1^{\lambda_k} q_2^{\lambda_k - \overline{\sigma_k}}} \right) 
\vert \lambda + 1_{k} \cdot \delta_{\overline{s},1}, \barsigma + \overline{1_k} \rangle,
\end{align}
where $F(k):= \displaystyle{\sum_{i=1}^{k-1}} \barsigma_i $ is the number of fermionic rows above the $k$-th row and 
the sign factor $ (-1)^{F(k)}$ comes from the fermionic nature of $E_{\overline{s}}(z)$.

The $N \to \infty$ limit of the $E_{\overline{s}}(z)$ action is well-defined,
since it is bounded by $\ell(\lambda)+1$. On the other hand we have to regularize the action of the Cartan 
currents $K^\pm_{\overline{s}}(z)$, since the coproduct \eqref{copro-K} leads to the infinite product. 
The infinite product is regularized by specifying the order of taking the product, such that the cancellation of 
infinitely many factors takes place. We define
\begin{align}
\prod_{i=\ell(\lambda) +1}^\infty \Psi^{(\overline{1})}_{[u(q_1q_2^{-1})^{i-1}]_{-1,0}}
&= \prod_{i=\ell(\lambda) +1}^\infty \frac{\phi(q_1^{-1} q_2; z, u(q_1q_2^{-1})^{i-1})}
{\phi(1; z, u(q_1q_2^{-1})^{i-1})}= \frac{1}{z - u(q_1q_2^{-1})^{\ell(\lambda)}},
\end{align}
and
\begin{align}
\prod_{i=\ell(\lambda) +1}^\infty \Psi^{(\overline{2})}_{[u(q_1q_2^{-1})^{i-1}]_{-1,0}}
&=  \prod_{i=\ell(\lambda) +1}^\infty \frac{\phi(q_1; z, u(q_1q_2^{-1})^{i-1})}
{\phi(q_2; z, u(q_1q_2^{-1})^{i-1})}= z- uq_1^{-1}(q_1q_2^{-1})^{\ell(\lambda)}.
\end{align}
Substituting the definition \eqref{Cartan-eigen1a} and \eqref{Cartan-eigen2a}, 
we obtain the following generating functions of the eigenvalues of the Cartan modes;
\begin{align}\label{K1-eigen}
K_1(z) &= \frac{1}{z - t^{-\ell(\lambda)}u}\prod_{i=1}^{\ell(\lambda)}
\frac{z-q^{\lambda_i - \barsigma_i}t^{-i}u}{z-q^{\lambda_i - \barsigma_i}t^{1-i}u},
\\
\label{K2-eigen}
K_2(z) &=  (z- (t/q)^{\frac{1}{2}}t^{-\ell(\lambda)}u)
\prod_{i=1}^{\ell(\lambda)} \frac{z-q^{\lambda_i -\frac{1}{2}}t^{\frac{3}{2}-i}u}{z-q^{\lambda_i -\frac{1}{2}}t^{\frac{1}{2}-i}u},
\end{align}
where we have defined $q_1q_2^{-1}=t^{-1}$ and $q_1q_2= q$.

The leading coefficients of the Cartan currents $K_{i}^\pm(z)$ are\footnote{In \cite{Kanno:2025ifd}
the symmetric definition of $\phi(p;z,w)$ led to the $\ell(\lambda)$ dependent leading coefficients. 
An advantage of the present definition \eqref{phi-def} is that the leading coefficients
become $\ell(\lambda)$ independent.}
\begin{equation}
 z^{-1}(K_1^{+})_{0} = z^{-1}, \qquad (K_1^{-})_0 = - u^{-1}, 
\end{equation}
and 
\begin{equation}
z (K_2^{+})_{0} = z, \qquad (K_2^{-})_0 = -(t/q)^{\frac{1}{2}}u.
\end{equation}
We see that the shift parameters of the super Fock representation are $r_1=-1$ and $r_2=1$.

Finally from the coproduct \eqref{copro-F} the action of $F_{\overline{s}}(z)$ is
\begin{align}\label{F-action}
F_{\overline{s}}(z) \vert \lambda, \barsigma \rangle &= \mathcal{F}_{\overline{s}}
 \sum_{k=1}^{\ell(\lambda)} (-1)^{F(k)} \cdot \overline{\delta}_{\overline{s},\sigma_k}
\prod_{i=k+1}^{\ell(\lambda)} \left[ \Psi^{(s)}_{[u(q_1q_2^{-1})^{i-1}]_{\lambda_i-1, \barsigma_i}}(z) \right]_{+} \CR
& \times  \prod_{i=\ell(\lambda)+1}^{\infty} \left[ \Psi^{(s)}_{[u(q_1q_2^{-1})^{i-1}]_{-1, 0}}(z) \right]_{+}  
\delta\left( \frac{z}{uq_1^{\lambda_k +k -1 -\overline{s}}q_2^{\lambda_k -k}} \right) 
\vert \lambda - 1_k \cdot \overline{\delta}_{s,1}, \barsigma - \overline{1_k} \rangle.
\end{align}
The infinite product is regularized in the same manner as $K_{\overline{s}}(z)$.

%%%%%%%%%%%%%%%%%%%%%%%%%%%%%%%%%%%%%%%%%%%%%%%%%%%%%%%%%%%%%%%%%%%%%%

\subsection{Eigenvalues of a \lq\lq quartet\rq\rq\ of Hamiltonians}

In the level zero representation $K_i^{\pm}(z)$ are mutually commuting and the super Macdonald polynomials 
$\mathcal{M}_{\Lambda}(x, \theta;q,t)$ are simultaneous eigenstates of the Hamiltonians $H_{i, \pm 1}$.
In this subsection we evaluate the eigenvalues of the Hamiltonians $H_{i, \pm 1}$.
\begin{prp}
\label{eigenvalues}
\begin{align}
\label{eigen-1}
1 - u(t/q)^{\frac{1}{2}}H_{2,-1} &=  (t-1)(q^{-1}-1) \sum_{(i,j) \in \Lambda^{\circledast}}q^{1-j} t^{i-1},
\\
1 + u H_{1,-1} &= (t-1)(q^{-1}-1) \sum_{(i,j) \in \Lambda^{*}} q^{1-j}t^{i-1},
\\
1+ u^{-1}(q/t)^{\frac{1}{2}}H_{2,+1} &=  (t^{-1}-1)(q-1) \sum_{(i,j) \in \Lambda^{\circledast}}q^{j-1} t^{1-i},
\\
\label{eigen-4}
1- u^{-1} H_{1,+1} &= (t^{-1}-1)(q-1) \sum_{(i,j) \in \Lambda^{*}} q^{j-1} t^{1-i},
\end{align}
where $u$ is the spectral parameter of the representation.
We define $\sigma_i \in \{0,1 \} $ according to the $\mathbb{Z}_2$ grading of $\Lambda_i$.
Then $\Lambda^{\circledast}$ and $\Lambda^{*}$ are the ordinary partitions whose components are defined 
by $\Lambda + \frac{1}{2}\sigma$ and  $\Lambda - \frac{1}{2}\sigma$, respectively. 
\end{prp}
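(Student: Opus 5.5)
The plan is to read off $H_{i,\pm1}$ directly from the generating functions \eqref{K1-eigen} and \eqref{K2-eigen}. For $H_{i,+1}$ we expand around $z=\infty$, and for $H_{i,-1}$ around $z=0$. We then convert the resulting sums over rows into sums over boxes by summing a geometric series in $j$ along each row. First we fix the dictionary between the state $\vert\lambda,\barsigma\rangle$ and the super partition $\Lambda$. A row carrying a fermion ($\barsigma_i=1$) corresponds to $\Lambda_i=\lambda_i-\frac12$. Hence the exponent $\lambda_i$ appearing in $K_2$ is the row length of $\Lambda^{\circledast}=\Lambda+\frac12\sigma$. Likewise $\lambda_i-\barsigma_i$ appearing in $K_1$ is the row length of $\Lambda^{*}=\Lambda-\frac12\sigma$. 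I will check this identification against the actions \eqref{E-action} and \eqref{F-action}.

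For $K_2$, write
\begin{equation*}
K_2(z)=(z-(t/q)^{\frac12}t^{-\ell}u)\prod_{i=1}^{\ell}\frac{z-tb_iu}{z-b_iu},\qquad b_i=q^{\lambda_i-\frac12}t^{\frac12-i}.
\end{equation*}
Expanding at $z=\infty$ gives $z\,(1+z^{-1}c_1+\cdots)$, whose leading coefficient $z$ confirms $(K_2^+)_0$ and $r_2=1$. The first-order coefficient is
\begin{equation*}
H_{2,+1}=-u\Big((t/q)^{\frac12}t^{-\ell}+(t-1)\sum_i b_i\Big).
\end{equation*}
It follows that $1+u^{-1}(q/t)^{\frac12}H_{2,+1}=1-t^{-\ell}+(1-t)\sum_i q^{\lambda_i}t^{-i}$. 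On the other side of the claimed identity, summing over the boxes of row $i$ gives $\sum_{j=1}^{\lambda_i}q^{j-1}=(q^{\lambda_i}-1)/(q-1)$. The right-hand side of the third identity of Proposition~\ref{eigenvalues} therefore becomes
\begin{equation*}
(1-t)\sum_i q^{\lambda_i}t^{-i}-(t^{-1}-1)\sum_{i=1}^{\ell}t^{1-i}.
\end{equation*}
The last sum telescopes to $-(1-t^{-\ell})$, which matches. For $K^-_2$ we expand at $z=0$. Dividing by the constant term $(K_2^-)_0=-(t/q)^{\frac12}u\,t^{-\ell}\prod_i t$ and reading off the $z^1$ coefficient gives $H_{2,-1}$ as $-\sum$ of the reciprocals of the zeros plus $\sum$ of the reciprocals of the poles. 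The same telescoping, now with $\sum_{j}q^{1-j}$, yields \eqref{eigen-1}. In effect this is the $\iota$-image of the previous computation.

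$K_1$ is handled identically. Its prefactor $1/(z-t^{-\ell}u)$ gives $(K_1^+)_0$ with $r_1=-1$. The product has zeros at $q^{\lambda_i-\barsigma_i}t^{-i}u$ and poles at $q^{\lambda_i-\barsigma_i}t^{1-i}u$. The first-order terms at $\infty$ and at $0$ then produce, after the geometric sum over the boxes of $\Lambda^{*}$, the second identity and \eqref{eigen-4}. One has to watch the signs carefully: the prefactor now sits in the denominator, and $(K_1^-)_0=-u^{-1}$ is normalized using the telescoping identity $\prod_i t^{-i}/t^{1-i}=t^{-\ell}$.

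The main obstacle I anticipate is bookkeeping of the length $\ell$. Here $\ell=\ell(\lambda)$ counts every row, including a fermionic row with $\Lambda_i=0$, that is with $\lambda_i=\barsigma_i=1$, which contributes no box to $\Lambda^{*}$. The constant terms $t^{-\ell}$ produced by the prefactors must cancel against the $j$-independent parts of the row sums. So I will verify that a row with $\lambda_i-\barsigma_i=0$ contributes a factor $(z-t^{-i}u)/(z-t^{1-i}u)$ that shifts the effective prefactor from $t^{-\ell}$ to $t^{-\ell(\Lambda^*)}$ by telescoping. With that settled, the identities hold with $\ell$ replaced by the lengths of $\Lambda^{\circledast}$ and $\Lambda^{*}$ respectively. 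This is precisely what is needed for the box sums to be $\ell$-independent, consistent with the regularization chosen in \eqref{phi-def}.
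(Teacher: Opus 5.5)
Your proposal is correct and is essentially the paper's own proof. Like the paper, you extract $H_{i,\pm1}$ as the first-order coefficients of $\log K_i^{\pm}$ at $z=\infty$ and $z=0$ from \eqref{K1-eigen}--\eqref{K2-eigen}, and then turn row sums into box sums using $(t-1)\sum_{i=1}^{\ell}t^{i-1}=t^{\ell}-1$ and its $(q,t)\to(q^{-1},t^{-1})$ image, with the dictionary $\lambda_i=\Lambda^{\circledast}_i$, $\lambda_i-\barsigma_i=\Lambda^{*}_i$.

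There are two slips to fix. First, since $K_2^-(z)=K^-_{2,0}\exp(-\sum_{r}H_{2,-r}z^r)$, the combination $-\sum(\text{reciprocals of zeros})+\sum(\text{reciprocals of poles})$ equals $-H_{2,-1}$, not $H_{2,-1}$; equivalently $H_{2,-1}=-\frac{d}{dz}\log K_2^-(0)$, and with your sign \eqref{eigen-1} would fail. Second, the row with $\lambda_i=\barsigma_i=1$ has $\Lambda_i=\frac12$, not $0$: it is $\Lambda^{*}_i$ that vanishes, and that row contributes $q^{0}-1=0$ to the row sum, so no separate telescoping of the prefactor is needed.
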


From \eqref{K1-eigen} and \eqref{K2-eigen} we can compute the eigenvalues of the first Hamiltonians
\begin{equation}
 H_{1,-1} = -\frac{d}{dz} \log K_1^{-}(0), \qquad
H_{2,-1} = - \frac{d}{dz}\log K_2^{-}(0).
\end{equation}
We have
\begin{equation}
\frac{d}{dz}\log K_2^{-} (z)
= \frac{1}{z- (t/q)^{\frac{1}{2}} t^{-\ell(\lambda)}u}+ \sum_{i=1}^{\ell(\lambda)}
\left( \frac{1}{z-q^{\lambda_i - \frac{1}{2}}t^{\frac{3}{2}-i}u}-\frac{1}{z-q^{\lambda_i - \frac{1}{2}}t^{\frac{1}{2}-i}u} \right),
\end{equation}
and
\begin{align}\label{First-eigen-1}
1 - u(t/q)^{\frac{1}{2}}H_{2,-1} &=  1 -t^{\ell(\lambda)} + (t-1) \sum_{i=1}^{\ell(\lambda)}q^{-\lambda_i} t^{i-1} \CR
&= (t-1) \sum_{i=1}^{\ell(\lambda)}(q^{-\lambda_i}-1) t^{i-1} =  (t-1) \sum_{i=1}^{\infty}(q^{-\lambda_i}-1) t^{i-1} \CR
&=  (t-1)(q^{-1}-1) \sum_{(i,j) \in \Lambda^{\circledast}}q^{1-j} t^{i-1} .
\end{align}
Similarly from
\begin{equation}
\frac{d}{dz} \log K_1^{-}(z) = - \frac{1}{z- t^{-\ell(\lambda)}u} + \sum_{i=1}^{\ell(\lambda)} 
\left( \frac{1}{z- q^{\lambda_i - \sigma_i}t^{-i}u} - \frac{1}{z- q^{\lambda_i - \sigma_i}t^{1-i}u} \right),
\end{equation}
we obtain
\begin{align}\label{First-eigen-2}
1 + u H_{1,-1} &= (t-1) \sum_{i=1}^{\ell(\lambda)} (q^{-\lambda_i+ \sigma_i}-1)t^{i-1} \CR
&= (t-1)(q^{-1}-1) \sum_{(i,j) \in \Lambda^{*}} q^{1-j}t^{i-1}.
\end{align}

We can also compute the eigenvalues of the Hamiltonians $H_{1,+1}$ and $H_{2,+1}$ on the positive side. 
Since the algebra is shifted, we have
\begin{align}
\log (zK_1^{+}(z)) &= \log K_{1,0}^{+} + \sum_{r=1}^\infty H_{1,r}z^{-r},
\\
\log (z^{-1}K_2^{+}(z)) &= \log K_{2,0}^{+} + \sum_{r=1}^\infty H_{2,r}z^{-r}.
\end{align}
Recall that $K_{1,0}^{+}=K_{2,0}^{+}=1$. Setting $w=z^{-1}$ we have
\begin{align}
\frac{d}{dw} \left( \sum_{r=1}^\infty H_{1,r}w^r \right)
&= \frac{t^{-\ell(\lambda)}u}{1-t^{-\ell(\lambda)}uw}
- \sum_{i=1}^{\ell(\lambda)} \left( \frac{q^{\lambda_i - \barsigma_i}t^{-i}u}{1-q^{\lambda_i - \barsigma_i}t^{-i}uw} 
- \frac{q^{\lambda_i - \barsigma_i}t^{1-i}u}{1-q^{\lambda_i - \barsigma_i}t^{1-i}uw} \right),
\\
\frac{d}{dw} \left( \sum_{r=1}^\infty H_{2,r}w^r \right)
&= - \frac{(t/q)^{\frac{1}{2}}t^{-\ell(\lambda)}u}{1-(t/q)^{\frac{1}{2}}t^{-\ell(\lambda)}uw}
- \sum_{i=1}^{\ell(\lambda)} \left( \frac{q^{\lambda_i - \frac{1}{2}}t^{\frac{3}{2}-i}u}{1-q^{\lambda_i - \frac{1}{2}}t^{\frac{3}{2}-i}uw}
- \frac{q^{\lambda_i - \frac{1}{2}}t^{\frac{1}{2}-i}u}{1-q^{\lambda_i - \frac{1}{2}}t^{\frac{1}{2}-i}uw}
\right).
\end{align}
Hence, we obtain
\begin{align}
1- u^{-1} H_{1,+1} &= 1-  t^{-\ell(\lambda)} + (t^{-1}-1) \sum_{i=1}^{\ell(\lambda)} q^{\lambda_i - \barsigma_i}t^{1-i} \CR
&= (t^{-1}-1) \sum_{i=1}^{\ell(\lambda)} (q^{\lambda_i - \barsigma_i} -1) t^{1-i},
\\
1+ u^{-1}(q/t)^{\frac{1}{2}}  H_{2,+1} &= 1 - t^{-\ell(\lambda)} + (t^{-1}-1) \sum_{i=1}^{\ell(\lambda)}  q^{\lambda_i}t^{1-i} \CR
&= (t^{-1}-1) \sum_{i=1}^{\ell(\lambda)}  (q^{\lambda_i}-1) t^{1-i}.
\end{align}
Thus the comparison of the eigenvalues implies the correspondence
\begin{align}
1+ uH_{1,-1} &\longleftrightarrow 1 - u^{-1} H_{1,+1},
\\
1- u(t/q)^{\frac{1}{2}} H_{2,-1} &\longleftrightarrow 1 + u^{-1}(q/t)^{\frac{1}{2}} H_{2,+1},
\end{align}
under the involution $(q,t) \to (q^{-1}, t^{-1})$. 
Recall that the (odd) super Macdonald polynomials are not invariant under the involution $(q,t) \to (q^{-1}, t^{-1})$.
But the above result means that they are actually eigenfunctions of four mutually commuting Hamiltonians $H_{1, \pm 1}$ and $H_{2, \pm 1}$.
It also means though eigenvalues are related by the involution $(q,t) \to (q^{-1}, t^{-1})$,
the Hamiltonians are not simply related by the inversion.

It is natural to identify the finite variable version of the Hamiltonians with $\mathcal{D}_{1,N}, \mathcal{D}_{2,N}$ and 
$\overline{\mathcal{D}}_{1,N}, \overline{\mathcal{D}}_{2,N}$ in \cite{Alarie-Vezina:2019ohz}. 
We may obtain the eigenvalues for $N$ variables case by rewriting the summation $\displaystyle{\sum_{(i,j) \in \lambda}}$
to $\displaystyle{\sum_{i=1}^N}\displaystyle{\sum_{j=1}^{\lambda_i}}$. Then we find the following relations;
\begin{align}
(t-1)\mathcal{D}_{2,N} - t^N &=- u(t/q)^{\frac{1}{2}}H_{2,-1}, \\
\label{finite-H}
(t-1)\mathcal{D}_{1,N} -t^N &= u H_{1,-1}, \\
(t^{-1}-1) \overline{\mathcal{D}}_{2,N} - t^{-N} &= u^{-1}(q/t)^{\frac{1}{2}}H_{2,+1}, \\
(t^{-1}-1)  \overline{\mathcal{D}}_{1,N} - t^{-N} &=- u^{-1} H_{1,+1},
\end{align}
where the additional term $-t^{\pm N}$ is due to the truncation of the length of partitions by $N$. 

Similarly the comparison of the eigenvalues implies the relations to the Hamiltonians in \cite{Galakhov:2025phf};
\begin{align}
\label{R-dictionary+}
\widehat{\mathcal{H}}^{\mathsf{t}, +} &= - u^{-1}(q/t)^{\frac{1}{2}} H_{2,+1},
\qquad 
\widehat{\mathcal{H}}^{\overline{\mathsf{t}}, +} = u^{-1} H_{1,+1}, \\
\label{R-dictionary-}
\widehat{\mathcal{H}}^{\mathsf{t}, -} &= u (t/q)^{\frac{1}{2}} H_{2,-1},
\qquad 
\widehat{\mathcal{H}}^{\overline{\mathsf{t}}, -} = - u H_{1,-1},
\end{align}
where in  \cite{Galakhov:2025phf} $\mathsf{t}$ and $\overline{\mathsf{t}}$ are denoted by the upper left triangle and its compliment in the square $\square$. 
%\begin{picture}(5,4)
%\setlength{\unitlength}{0.8mm}
%\thicklines
%\put(0,4){\line(1,0){5}}
%\put(0,-1){\line(0,1){5}}
%\put(0,-1){\line(1,1){5}}
%\end{picture}
%%%%%%%%%%%%%%%%%%%%%%%%%%%%%%%%%%%%%%%%%%%%%%%%%%%%%%%%%%%%%%%%%%%%%%%%%%%%%

\section{Pieri rule for the super Macdonald polynomials}
\label{section:Pieri}

Let us define the Pieri coefficients $\psi_{k}^{(\overline{s})}(q,t)$ by the action of the zero modes
of $E_{\overline{s}}(z)$;
\begin{align}
E_{\overline{s},0} \vert \lambda, \barsigma \rangle &= \sum_{k=1}^{\ell(\lambda)+1}\psi_{k}^{(\overline{s})}(q,t)
\cdot \overline{\delta}_{s + \barsigma_k,1} 
\vert \lambda + 1_{k} \cdot \delta_{\overline{s},1}, \barsigma + \overline{1_k} \rangle.
\end{align}

From \eqref{E-action}, we obtain
\begin{align}
\psi_{k}^{(1)}(q,t) &=
 \mathcal{E}_{1} \cdot (-1)^{F(k)} \prod_{i=1}^{k-1} \Psi^{(1)}_{[u t^{1-i}]_{\lambda_i -1, \overline{\sigma_i}}}(u t^{1-k} q^{\lambda_k})  \CR
%&= (-1)^{F(k)} \prod_{i=1}^{k-1} \frac{t^{1-k}q^{\lambda_k} - q^{\lambda_i - \barsigma_i}t^{-i}}{t^{1-k}q^{\lambda_k} - q^{\lambda_i - \barsigma_i}t^{1-i}} \CR
&= (-1)^{F(k)} \prod_{i=1}^{k-1} \frac{1- q^{\lambda_i -\lambda_k - \barsigma_i}t^{k-i-1}}{1- q^{\lambda_i -\lambda_k - \barsigma_i}t^{k-i}},
\end{align}
and
\begin{align}
\psi_{k}^{(2)}(q,t) &= \mathcal{E}_{2} \cdot
 (-1)^{F(k)} \prod_{i=1}^{k-1} \Psi^{(2)}_{[u t^{1-i}]_{\lambda_i -1, \overline{\sigma_i}}}
(u t^{\frac{1}{2}-k} q^{\lambda_k-\frac{1}{2}}) \CR
&= (1-t) (-1)^{F(k)} \prod_{i=1}^{k-1} \frac{1- q^{\lambda_i - \lambda_k}t^{1+k-i}}
 {1- q^{\lambda_i -\lambda_k}t^{k-i}}. 
\end{align}
Since the action of the current $E_{\overline{s}}(z)$ involves the delta function
\begin{equation}
\delta \left(\frac{z}{u(q_1q_2^{-1})^{k-1} q_1^{\lambda_k}q_2^{\lambda_k - (1-\overline{s})}}\right)
= \delta \left(\frac{z}{ut^{1-k-\frac{1}{2}(1-\overline{s})}q^{\lambda_k-\frac{1}{2}(1-\overline{s})}}\right),
\end{equation} 
the action of the modes $E_{\overline{s},n}$ is obtained by multiplying the factor
\begin{equation}\label{En-Pieri}
(ut^{1-k-\frac{1}{2}(1-\overline{s})}q^{\lambda_k-\frac{1}{2}(1-\overline{s})})^n.
\end{equation}
In the geometric construction of the level zero representation by the correspondence,
this is nothing but the multiplication of the first Chern class of an appropriate line bundle. 

Similarly we define the Pieri coefficients $\widetilde{\psi}_{k}^{(\overline{s})}(q,t)$ by the action of the zero modes
of $F_{\overline{s}}(z)$;
\begin{align}
F_{\overline{s},0} \vert \lambda, \barsigma \rangle &= \sum_{k=1}^{\ell(\lambda)}\widetilde{\psi}_{k}^{(\overline{s})}(q,t)
\cdot \overline{\delta}_{\overline{s} + \barsigma_k,0} 
\vert \lambda - 1_{k} \cdot \delta_{\overline{s},1}, \barsigma - \overline{1_k} \rangle.
\end{align}
From \eqref{F-action}, we obtain
\begin{align}\label{F1-coeff}
\widetilde{\psi}_k^{(1)}(q,t) 
&=  \mathcal{F}_{\overline{1}} \cdot(-1)^{F(k)} u^{-1} \frac{1}{q^{\lambda_k-1} t^{1-k}-t^{-\ell(\lambda)}}
\prod_{i=k+1}^{\ell(\lambda)} \Psi^{(1)}_{[u t^{1-i}]_{\lambda_i-1, \barsigma_i}}(u q^{\lambda_k-1} t^{1-k}) \CR
&=  (-1)^{F(k)} u^{-1} \frac{t^{k-1}(1-t)}{1-q^{\lambda_k-1}t^{\ell(\lambda)-k+1}}
\prod_{i=k+1}^{\ell(\lambda)} \frac{1 - q^{\lambda_k -\lambda_i -1 + \barsigma_i}t^{i-k+1}}
{1 - q^{\lambda_k -\lambda_i -1 + \barsigma_i}t^{i-k}},
\end{align}
and 
\begin{align}
\widetilde{\psi}_k^{(2)}(q,t)  
&=  \mathcal{F}_{\overline{2}} \cdot(-1)^{F(k)}u(t/q)^{\frac{1}{2}} (q^{\lambda_k} t^{-k}-  t^{-\ell(\lambda)}) 
\prod_{i=k+1}^{\ell(\lambda)} \Psi^{(2)}_{[ut^{1-i}]_{\lambda_i-1, \barsigma_i}}(u q^{\lambda_k -\frac{1}{2}} t^{\frac{1}{2}-k}) \CR
&=(-1)^{F(k)+1}u(t/q)^{\frac{1}{2}} t^{-k} (1-  q^{\lambda_k} t^{-k+\ell(\lambda)}) 
\prod_{i=k+1}^{\ell(\lambda)} \frac{1 - q^{\lambda_k - \lambda_i}t^{i-k-1}}{1 - q^{\lambda_k -\lambda_i}t^{i-k}}.
\end{align}

The factor $t^{\pm k}$ may be eliminated by considering not the zero modes but the shifted modes $F_{1, +1}$ and $F_{2,-1}$.
Recall that the action of the current $F_{\overline{s}}(z)$ involves the delta function 
\begin{equation}
\delta \left( \frac{z}{u q_1^{\lambda_k+k-1-\overline{s}} q_2^{\lambda_k - k}} \right)
= \delta \left( \frac{z}{u q^{\lambda_k -\frac{1}{2}(1+\overline{s})} t^{- k + \frac{1}{2}(1+\overline{s})}} \right).
\end{equation}
Hence the Pieri coefficients for $F_{1, +1}$ and $F_{2,-1}$ are
\begin{align}
\widetilde{\psi}_k^{(1,+1)}(q,t) 
&= (-1)^{F(k)} \frac{q^{\lambda_k-1}(1-t)}{1-q^{\lambda_k-1}t^{\ell(\lambda)-k+1}}
\prod_{i=k+1}^{\ell(\lambda)} \frac{1 - q^{\lambda_k -\lambda_i -1 + \barsigma_i}t^{i-k+1}}
{1 - q^{\lambda_k -\lambda_i -1 + \barsigma_i}t^{i-k}},
\\
\widetilde{\psi}_k^{(2,-1)}(q,t)  
&= (-1)^{F(k)+1}{q^{-\lambda_k}} (1-  q^{\lambda_k} t^{-k+\ell(\lambda)}) 
\prod_{i=k+1}^{\ell(\lambda)} \frac{1 - q^{\lambda_k - \lambda_i}t^{i-k-1}}{1 - q^{\lambda_k -\lambda_i}t^{i-k}}.
\end{align} 

By looking at the Pieri rule of the super Macdonald polynomials at lower levels explicitly, 
we have arrived at the following formula;
\begin{equation}\label{E-rep}
E_{1,0} = \pi_1, \qquad
E_{2,0} = \sum_{k=1}^\infty c_k[p] \frac{\partial}{\partial \pi_k},
\end{equation}
where
\begin{equation}\label{c_k-def}
\sum_{k=0}^\infty c_k[p] u^k = \exp \left(\sum_{r=1}^\infty \frac{1-t^r}{r} p_ru^r \right)
= \prod_{i=1}^\infty \frac{1-tu x_i}{1-ux_i}.
\end{equation}
Note that $E_{2,0}$ is linear in the derivatives in the fermionic power sum $\pi_k$.

Interestingly it is not the zero modes, but the shifted modes of $F_{\overline{s}}(z)$
that have a nice expression in terms of the multiplication and the derivative of the fermionic power sums. 
\begin{equation}\label{F-rep}
F_{1,+1} = \frac{\partial}{\partial \pi_1}, \qquad
F_{2,-1} = -\sum_{k=1}^\infty \pi_k \cdot \widetilde{c}_k \left[ \partial/ \partial p \right],
\end{equation}
where
\begin{equation}\label{tc_k-def}
\sum_{k=0}^\infty \widetilde{c}_k \left[ \partial/ \partial p \right] z^{-k}
= \exp \left( \sum_{n=1}^\infty (q^{-n}-1) \frac{\partial}{\partial p_n} z^{-n} \right).
\end{equation}
In fact among the modes of the currents $F_{\overline{s}}(z)$, $F_{1,+1}$ and $F_{2,-1}$ are chosen
by the property that they are independent of the spectral parameter $u$. 

%%%%%%%%%%%%%%%%%%%%%%%%%%%%%%%%%%%%%%%%%%%%%%%%%%%%%%%%%%%%%%%%%%%%%%%%%%%%

%%%%%%%%%%%%%%%%%%%%%%%%%%%%%%%%%%%%%%%%%%%%%%%%%%%%%%%%%%%%%%%%%%%%%%%%%%%%%%%%%%

\section{Hamiltonians $H_{1, -1}$ and $H_{2,-1}$ from the Pieri formula}
\label{section:Hamiltonians}

We first note that the mode expansion of the shifted anti-commutation relation \eqref{shifted} implies
\begin{align}
\left[ E_{1,0}, F_{1,+1} \right]_{+} &= z(z^{-1}K_1^{+}(z) - K_1^{-}(z))\vert_{z^0} = K_{1,0}^{+} =1,
\\
\left[ E_{2,0}, F_{2,-1} \right]_{+} &= z^{-1}(z K_2^{+}(z) - K_2^{-}(z))\vert_{z^0} = K_{2,0}^{+} + K_{2,0}^{-}H_{2,-1} \CR
&= 1 -(t/q)^{\frac{1}{2}}u H_{2,-1}.
\end{align}

\begin{prp}
Let us define\footnote{This is motivated by \eqref{First-eigen-1}.}
\begin{equation}
\mathcal{D}_2 := \frac{1}{1-t} \left((t/q)^{\frac{1}{2}}u H_{2,-1} -1 \right) 
=\mathcal{D}_2^{B} + \mathcal{D}_2^{F},
\end{equation}
where we have decomposed $\mathcal{D}_2$ to the bosonic term $\mathcal{D}_2^{B}$ and the fermionic one $\mathcal{D}_2^{F}$. 
Then $\mathcal{D}_2^{B}$ agrees with the Ruijsenaars-Macdonald Hamiltonian\footnote{
Compared with the standard definition, the parameters $(q,t)$ are inverted to $(q^{-1}, t^{-1})$.}
and $\mathcal{D}_2^{F}$ is bi-linear in fermions;
\begin{equation}
\mathcal{D}_2^{F} = - \sum_{k=1}^\infty \sum_{\ell=1}^\infty \widetilde{C}_{k, \ell}[p, \partial/ \partial p] 
%\Big[ \overline{c}_k[p],  \widetilde{c}_\ell \left[ \partial/ \partial p \right] \Big]  
\pi_k \frac{\partial}{\partial \pi_\ell},
\end{equation}
where
\begin{equation}\label{Fbi-linear}
 \widetilde{C}_{k, \ell}[p, \partial/ \partial p] 
 =  (1-t)(q^{-1}-1) \sum_{n=\ell -k}^{\ell-1} [[\ell -n ]]_{(q^{-1},t)} \cdot c_n[p] \cdot \widetilde{c}_{k-\ell+n} \left[ \partial/ \partial p \right].
\end{equation}
\end{prp}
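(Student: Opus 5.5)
We compute the anti-commutator $\left[ E_{2,0}, F_{2,-1} \right]_{+}$ directly from the realization \eqref{E-rep}, \eqref{F-rep}. The mode expansion shown just above gives $\left[ E_{2,0}, F_{2,-1} \right]_{+} = 1 -(t/q)^{\frac{1}{2}}u H_{2,-1}$, hence $\mathcal{D}_2 = -\frac{1}{1-t}\left[ E_{2,0}, F_{2,-1} \right]_{+}$. Writing $E_{2,0}=\sum_\ell c_\ell[p]\,\partial/\partial\pi_\ell$ and $F_{2,-1}=-\sum_k \pi_k\,\widetilde{c}_k[\partial/\partial p]$, the bosonic coefficients are even and commute with all fermionic symbols. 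The odd operators obey $\{\partial/\partial\pi_\ell,\pi_k\}=\delta_{k\ell}$. We therefore get
\begin{equation*}
\left[ E_{2,0}, F_{2,-1} \right]_{+} = -\sum_{k}c_k[p]\,\widetilde{c}_k[\partial/\partial p] \;-\; \sum_{k,\ell}\Big(\widetilde{c}_k[\partial/\partial p]\,c_\ell[p]-c_\ell[p]\,\widetilde{c}_k[\partial/\partial p]\Big)\pi_k\frac{\partial}{\partial \pi_\ell}.
\end{equation*}
To obtain this, we move $\partial/\partial\pi_\ell$ past $\pi_k$ in the term $E_{2,0}F_{2,-1}$, which produces a sign and reordered bosonic factors $c_\ell\widetilde{c}_k$ versus $\widetilde{c}_k c_\ell$. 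The first sum is the purely bosonic part. The second sum is bilinear in fermions, with coefficient $\widetilde{C}_{k,\ell}=[\widetilde{c}_k,c_\ell]$. After dividing by $-(1-t)$, this gives the stated form of $\mathcal{D}_2^F$, up to the normalization absorbed into the definition of $\widetilde{C}$. I will track the factor $(1-t)$ carefully here, since it is the main bookkeeping risk.

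For the bosonic part, I would use generating functions. We have $\widetilde{c}(z):=\sum_k\widetilde{c}_k z^{-k}=\exp(\sum_n (q^{-n}-1)\partial_{p_n}z^{-n})$ and $c(w)=\exp(\sum_r\frac{1-t^r}{r}p_rw^r)$. The operator $\sum_k c_k\widetilde{c}_k$ is the constant term in $z$ of the normally ordered product $c(z)\widetilde{c}(z)$. Acting on $p_\mu$ in the finite-variable realization, $\widetilde{c}(z)$ is a shift operator: $\exp(\sum_n(q^{-n}-1)z^{-n}\partial_{p_n})$ replaces $p_n$ by $p_n+(q^{-n}-1)z^{-n}$. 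This amounts to inserting the variable $q^{-1}/z$ and deleting $1/z$. The standard computation then expresses $\mathrm{CT}_z\, c(z)\widetilde{c}(z)$ as a contour integral of $\prod_i\frac{1-tzx_i}{1-zx_i}$ times the shift. Taking residues at $z=x_i^{-1}$ gives $\sum_i\prod_{j\ne i}\frac{t x_i - x_j}{x_i-x_j}$-type coefficients multiplying $T_{q^{-1},x_i}$. This is $\iota(\hat H_1)$ up to constants. Matching the constants $1$, $t^{N}$, $(1-t)$ against \eqref{First-eigen-1} fixes the identification with the inverted Ruijsenaars--Macdonald operator.

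For the coefficient $\widetilde{C}_{k,\ell}$, I would use the exchange relation of the two exponentials, which is a Heisenberg computation:
\begin{equation*}
\widetilde{c}(z)\,c(w)=\exp\Big(\sum_n\frac{(q^{-n}-1)(1-t^n)}{n}(w/z)^n\Big)c(w)\,\widetilde{c}(z)=\frac{(1-tw/z)(1-q^{-1}w/z)}{(1-w/z)(1-q^{-1}tw/z)}\,c(w)\widetilde{c}(z).
\end{equation*}
Subtracting $c(w)\widetilde{c}(z)$, the prefactor minus one equals $(1-t)(q^{-1}-1)\frac{w/z}{(1-w/z)(1-q^{-1}tw/z)}$. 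Expanding in $w/z$ by \eqref{qt-integer1} gives $(1-t)(q^{-1}-1)\sum_{m\ge1}[[m]]_{(1,q^{-1}t)}(w/z)^m$. Extracting the coefficient of $w^\ell z^{-k}$ yields a sum over $n$ of $c_n\widetilde{c}_{k-\ell+n}$ weighted by $[[\ell-n]]$, with $n$ ranging over $\ell-k\le n\le\ell-1$.

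The main obstacle is this last matching. A naive expansion gives $[[\cdot]]_{(1,q^{-1}t)}$ rather than the stated $[[\cdot]]_{(q^{-1},t)}$. So I must recheck the exchange factor, in particular whether $\partial_{p_n}$ acting on $\frac{1-t^r}{r}p_r$ carries an extra $n$. I would verify this against a low-level example from Appendix~\ref{App:Lower-Mac}, and reconcile the two forms through the finite range of $n$, possibly by using the vanishing of $c_n$ for $n\le0$ and of $\widetilde{c}_m$ for $m<0$.
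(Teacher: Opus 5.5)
Your strategy is the paper's own. The anticommutator expansion you write is correct and is exactly \eqref{H2}. The bosonic part is handled the same way: the paper cites the free-field form of the inverted Macdonald operator, and the Corollary after Lemma~\ref{lem:powersum-rep} gives it in one line, $\sum_{k\ge1}c_k\widetilde{c}_k=(1-t)\sum_i\prod_{j\neq i}\frac{x_i-tx_j}{x_i-x_j}\,\tau_i^{-1}-(1-t^N)$. Note that the residue factor is $\prod_{j\neq i}\frac{x_i-tx_j}{x_i-x_j}=t^{N-1}\prod_{j\neq i}\frac{t^{-1}x_i-x_j}{x_i-x_j}$, not $\frac{tx_i-x_j}{x_i-x_j}$; this is exactly why the result is the $(q,t)$-inverted operator.

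The gap is in the one real computation, the exchange relation, and as written your argument does not reach \eqref{Fbi-linear}. The commutator of the exponents is the $c$-number $\sum_n\frac{(q^{-n}-1)(1-t^n)}{n}(w/z)^n$, with no extra factor of $n$. The error is in exponentiating it. Since $\sum_n\frac{a^n}{n}x^n=-\log(1-ax)$, the terms $+q^{-n}$ and $+t^n$ produce the denominators, so
\begin{equation*}
\widetilde{c}(z)\,c(w)=\frac{(1-w/z)(1-q^{-1}t\,w/z)}{(1-q^{-1}w/z)(1-t\,w/z)}\,c(w)\,\widetilde{c}(z),
\end{equation*}
which is the reciprocal of your factor. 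Subtracting $1$ gives $(1-t)(q^{-1}-1)\frac{w/z}{(1-q^{-1}w/z)(1-t\,w/z)}$. Even with your factor the difference would carry $(1-t)(1-q^{-1})$, so your sign was also off. By \eqref{qt-integer1} this expands as $(1-t)(q^{-1}-1)\sum_{j\geq1}[[j]]_{(q^{-1},t)}(w/z)^j$. Reading off the coefficient of $z^{-k}w^{\ell}$ with $j+n=\ell$, $j+m=k$, $m\geq0$ gives \eqref{Fbi-linear} immediately, and from there your proof coincides with the paper's.

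The repair you proposed could not have worked. We have $[[2]]_{(1,q^{-1}t)}=1+q^{-1}t$ but $[[2]]_{(q^{-1},t)}=q^{-1}+t$. For fixed $(k,\ell)$ the operators $c_n[p]\,\widetilde{c}_{k-\ell+n}[\partial/\partial p]$ with distinct $n$ are linearly independent, so no vanishing of $c_n$ or $\widetilde{c}_m$ can turn one coefficient set into the other. The low-level check you mention would have exposed this. The constant coefficient of $\pi_2\,\partial/\partial\pi_2$ in $\mathcal{D}_2$ (Section~\ref{section:positive-modes}) is $\frac12\big((1+t)(q^{-2}-1)+(1-t)(q^{-1}-1)^2\big)=(q^{-1}-1)(q^{-1}+t)$. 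That value is exactly what makes $\mathcal{M}_{(1,\frac12)}=p_1\pi_1-\pi_2$ an eigenvector with eigenvalue $(q^{-1}-1)(1+t)$.

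Finally, do not absorb the normalization into $\widetilde{C}$, which is fixed by \eqref{Fbi-linear}. Your computation actually gives $\mathcal{D}_2^F=\frac{1}{1-t}\sum_{k,\ell}\widetilde{C}_{k,\ell}\,\pi_k\,\partial/\partial\pi_\ell$. This, and not the displayed prefactor $-1$, is what agrees with \eqref{First-eigen-1}. For example, on $\mathcal{M}_{(\frac12)}=\pi_1$ only $\widetilde{C}_{1,1}=(1-t)(q^{-1}-1)$ acts, and the eigenvalue must be $q^{-1}-1$. You should state this discrepancy with the displayed formula explicitly rather than hedge.
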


\begin{proof}
Substituting \eqref{E-rep} and \eqref{F-rep} we obtain
\begin{equation}\label{H2}
1 -(t/q)^{\frac{1}{2}}u H_{2,-1}
= - \sum_{k=1}^\infty \sum_{\ell=1}^\infty
\left( c_k[p] \widetilde{c}_\ell \left[ \partial/ \partial p_i \right] 
\left[  \frac{\partial}{\partial \pi_k}, \pi_\ell\right] 
+ \Big[ \widetilde{c}_\ell \left[ \partial/ \partial p \right],  c_k[p] \Big]  
\pi_\ell\frac{\partial}{\partial \pi_k} \right).
\end{equation}
By the commutation relation of the fermionic power sum,\footnote{The relation to
the mode expansion of the NS fermion field $\psi(z) = \sum_{r \in \mathbb{Z} + \frac{1}{2}} \psi_r z^{-r}$
should be $\pi_\ell = \psi_{-\ell + \frac{1}{2}}$ and $\frac{\partial}{\partial \pi_k} = q^{-k} \psi_{k- \frac{1}{2}}$.}
\begin{equation}
\left[  \frac{\partial}{\partial \pi_k}, \pi_\ell\right]_{+} = \delta_{k, \ell}
\end{equation}
The bosonic part $\mathcal{D}_2^{B}$ is
\begin{equation}
\mathcal{D}_2^{B} = \sum_{k=1}^\infty \overline{c}_k[p] \widetilde{c}_k\left[ \partial/ \partial p \right]
= \frac{1}{1-t} \oint \frac{dz}{z} \exp \left(\sum_{n=1}^\infty \frac{(1-t^n)p_n}{n} z^n \right)
 \exp \left( \sum_{n=1}^\infty (q^{-n} -1) \frac{\partial}{\partial p_n} z^{-n} \right),
\end{equation}
which agrees with eq.(47) of \cite{Awata:1995zk} (see also \cite{Awata:1994xd}),
if we make the inversion $(q,t) \to (q^{-1}, t^{-1})$.\footnote{In \cite{Alarie-Vezina:2019ohz}
they employed the $(q,t)$-inverted version of the Macdonald-Ruijsenaars operator.}

On the other hand the fermionic part $\mathcal{D}_2^{F}$ is bi-linear in fermions and 
%\begin{equation}
%\mathcal{D}_2^{F} = - \sum_{k=1}^\infty \sum_{\ell=1}^\infty 
%\widetilde{C}_{k, \ell}[p, \partial/ \partial p]  \pi_k \frac{\partial}{\partial \pi_\ell},
%\end{equation}
%where the coefficients of the fermion bi-linear terms are
the generating function of the coefficients
$\widetilde{C}_{k, \ell}[p, \partial/ \partial p] := \Big[\widetilde{c}_k \left[ \partial/ \partial p \right] , c_\ell [p] \Big]$
 is computed as follows;
\begin{align}
\sum_{k, \ell=1}^\infty \widetilde{C}_{k, \ell}[p, \partial/ \partial p] z^{-k} w^{\ell} 
&= \left[ \exp \left( \sum_{n=1}^\infty (q^{-n}-1) \frac{\partial}{\partial p_n} z^{-n} \right), 
\exp \left(\sum_{m=1}^\infty \frac{1-t^m}{m} p_m w^m \right)\right] \CR
&= \frac{(1-t)(q^{-1}-1)(w/z)}{(1-t(w/z))(1-q^{-1}(w/z)}  \CR
& \qquad \times \exp \left(\sum_{m=1}^\infty \frac{1-t^m}{m} p_m w^m \right)
\exp \left( \sum_{n=1}^\infty (q^{-n}-1) \frac{\partial}{\partial p_n} z^{-n} \right) \CR
&= (1-t)(q^{-1}-1) \sum_{j=1}^\infty \sum_{m,n=0}^\infty
[[j]]_{(q^{-1},t)} \cdot c_n[p] \cdot \widetilde{c}_m \left[ \partial/ \partial p \right] z^{-(j+m)} w^{j+n}.
\end{align}
Hence, taking the coefficient of $j+m=k$ and $j+n=\ell$, we obtain \eqref{Fbi-linear}.
\end{proof}
Later in subsection \ref{subsec:VS-Galakhov} we will see that 
the fermion bi-linear terms with the coefficients \eqref{Fbi-linear} agrees with 
the proposal in \cite{Galakhov:2025phf}.

%%%%%%%%%%%%%%%%%%%%%%%%%%%%%%%%%%%%%%%%%%%%%%%%%%%%%%%%%%%%%%%%%%%%%%%%%

\subsection{The second Hamiltonian $H_{1, -1}$}

From the commutation relations
\begin{align}
\left[ H_{2,-1}, E_{1,0} \right] &= \left[ H_{2,-1}^{F}, \pi_1 \right] 
= (q^{\frac{1}{2}} - q^{-\frac{1}{2}})(t^{\frac{1}{2}}-t^{-\frac{1}{2}}) E_{1,-1}, \\
\left[ H_{2,-1}, F_{1,1} \right] &= \left[ H_{2,-1}^{F}, \frac{\partial}{\partial \pi_1} \right] 
= - (q^{\frac{1}{2}} - q^{-\frac{1}{2}})(t^{\frac{1}{2}}-t^{-\frac{1}{2}}) F_{1,0},
\end{align}
and \eqref{H2} we have
\begin{align}
u (1-q^{-1})(t-1) E_{1, -1} &= (-1)(1-t) \sum_{\ell=1}^\infty \left[ p_1, \widetilde{c}_\ell \left[\partial/\partial p \right] \right] \pi_\ell, \\
u (1-q^{-1})(t-1) F_{1, 0} &= \sum_{k=1}^\infty \left[ c_k[p],  (q^{-1}-1) \frac{\partial}{\partial p_1}  \right] \frac{\partial}{\partial \pi_k}.
\end{align}

By looking at the commutation relation with the generating function, we obtain
\begin{align}
\Big[ p_1, \widetilde{c}_\ell \left[\partial/\partial p \right] \Big] 
&=  (1-q^{-1}) \widetilde{c}_{\ell-1} \left[\partial/\partial p \right], \\
\left[  \frac{\partial}{\partial p_1}, c_{k} [p] \right]
&= (1-t) c_{k-1}[p],
\end{align}
which implies
\begin{equation}
 E_{1, -1} = u^{-1}\sum_{\ell=1}^\infty \pi_\ell \cdot \widetilde{c}_{\ell-1} \left[\partial/\partial p \right], \qquad
F_{1, 0} = u^{-1} \sum_{k=1}^\infty c_{k-1}[p] \frac{\partial}{\partial \pi_k}.
\end{equation}
From the commutation relation
\begin{equation}
\left[ E_{1,-1}, F_{1,0} \right] = (z^{-1}K_1^{+}(z) - K_1^{-}(z)) \vert_{z^{+1}} 
= K^{-}_{1,0} H_{1,-1} = -u^{-1} H_{1,-1},
\end{equation}
we have
\begin{align}
 - H_{1,-1} &= u \left[ E_{1,-1}, F_{1,0} \right] = u^{-1}
 \left[\sum_{\ell=1}^\infty \widetilde{c}_{\ell-1} \left[\partial/\partial p \right] \cdot \pi_\ell, 
 \sum_{k=1}^\infty c_{k-1}[p] \frac{\partial}{\partial \pi_k} \right] \CR
&=  u^{-1}\left[ 1 + \sum_{k=1}^\infty c_k[p] \widetilde{c}_k[\partial/\partial p] + 
\sum_{\ell=2}^\infty \sum_{k=2}^\infty \widetilde{C}_{\ell,k} \pi_\ell \frac{\partial}{\partial \pi_k} \right], \label{H1}
\end{align}
where we have used $c_0[p]= \widetilde{c}_0 [\partial/\partial p] =1$.

Now we define\footnote{This is motivated by \eqref{First-eigen-2}.}
\begin{equation}
\mathcal{D}_1 := \frac{1}{t-1} \left( 1 + uH_{1,-1} \right) =\mathcal{D}_1^{B} + \mathcal{D}_1^{F}. 
\end{equation}
Then the bosonic part $\mathcal{D}_1^{B}$ is exactly the same as $\mathcal{D}_2^{B}$. 
Note that the fermionic part $\mathcal{D}_1^{F}$  does not involve $\pi_1$ and its derivative. 
In fact the second Hamiltonian should not have the derivative of $\pi_1$,
since its eigenvalue on $\mathcal{M}_{(\frac{1}{2})} = \pi$ vanishes. 
Up to level 2, the total Hamiltonian on the fermionic Macdonald polynomials is
\begin{equation}
\mathcal{D}_1 \sim (q^{-1}-1) \left[ p_1 \frac{\partial}{\partial p_1}+\pi_2\frac{\partial}{\partial \pi_2} \right].
\end{equation}
Since all the fermionic Macdonald polynomials are linear in $p_1$ and $\pi_2$, their eigenvalues are $q^{-1}-1$.
This is consistent, because $\Lambda^{*}=(1)$ for all the super partitions 
$\Lambda = (\frac{3}{2}), (1, \frac{1}{2}), (\frac{3}{2}, \frac{1}{2})$.

\subsection{Commutativity with the super charges}
We have obtained 
\begin{equation}
(t/q)^{\frac{1}{2}}u H_{2,-1}
= 1 + \sum_{k=1}^\infty
c_k[p] \widetilde{c}_k [\partial/\partial p]  +
 \sum_{k=1}^\infty \sum_{\ell=1}^\infty \widetilde{C}_{k, \ell}[p, \partial/ \partial p]
\pi_k \frac{\partial}{\partial \pi_\ell},
\end{equation}
and 
\begin{equation}
 - u H_{1,-1} =  1 + \sum_{k=1}^\infty c_k[p] \widetilde{c}_k[\partial/\partial p] + 
\sum_{\ell=1}^\infty \sum_{k=1}^\infty \widetilde{C}_{k,\ell}[p, \partial/ \partial p]
 \pi_{k+1} \frac{\partial}{\partial \pi_{\ell+1}},
\end{equation}
where
\begin{equation}
\widetilde{C}_{k, \ell}[p, \partial/ \partial p] := \Big[\widetilde{c}_k 
\left[ \partial/ \partial p \right] , c_\ell [p] \Big].
\end{equation}

These results are consistent with the commutation relation of the quantum 
toroidal algebra. Namely we have\footnote{Since $H_{1,-1}$ does not involve $\pi_1$,
$\left[ H_{1,-1}, E_{1,0} \right] = \left[ H_{1,-1}, F_{1,+1} \right] = 0$ is trivial.}
\begin{prp}\label{Prop;commutativity}
\begin{align}
\left[ H_{2,-1}, E_{2,0} \right] &= \left[ H_{2,-1}, F_{2,-1} \right] = 0, \\
\left[ H_{1,-1}, E_{1,-1} \right] &= \left[ H_{1,-1}, F_{1,0} \right] = 0.
\end{align}
\end{prp}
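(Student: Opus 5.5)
Since $H_{2,-1}$ and $H_{1,-1}$ are themselves anti-commutators of odd operators, I will prove the commutativity algebraically from the explicit forms (\ref{E-rep}), (\ref{F-rep}) and their descendants $E_{1,-1}$, $F_{1,0}$. For $X,Y$ odd we have $[XY+YX, X] = [X^2, Y]$. With $A:=(t/q)^{\frac12}uH_{2,-1} - 1 = -[E_{2,0},F_{2,-1}]_+$ this gives
\begin{equation*}
[A, E_{2,0}] = -[E_{2,0}^2, F_{2,-1}], \qquad [A, F_{2,-1}] = -[E_{2,0}, F_{2,-1}^2].
\end{equation*}
So it suffices to show $E_{2,0}^2 = 0$ and $F_{2,-1}^2=0$. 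The same identity with $-uH_{1,-1}=u[E_{1,-1},F_{1,0}]_+$ reduces the second line to $E_{1,-1}^2 = F_{1,0}^2 = 0$.

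Each nilpotency is immediate from the explicit forms.
\begin{itemize}
\item $E_{2,0}^2=\sum_{k,\ell} c_k[p]c_\ell[p]\,\partial_{\pi_k}\partial_{\pi_\ell}$, since the $c_k[p]$ are bosonic and commute with each other and with the fermionic derivatives. The coefficient is symmetric in $(k,\ell)$ while $\partial_{\pi_k}\partial_{\pi_\ell}$ is antisymmetric, so the sum vanishes.
\item $F_{2,-1}^2=\sum_{k,\ell}\pi_k\pi_\ell\,\widetilde c_k\widetilde c_\ell$. The $\widetilde c_k[\partial/\partial p]$ mutually commute, being built only from derivatives, so the same symmetric-times-antisymmetric argument applies.
\item $E_{1,-1}^2$ and $F_{1,0}^2$ vanish identically, with $c_{k-1},\widetilde c_{\ell-1}$ in place of $c_k,\widetilde c_\ell$.
\end{itemize}

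Here one uses that the bosonic coefficients are even, so they commute past $\pi$ and $\partial_\pi$. This is legitimate because the bosonic and fermionic variables are independent in the projective limit. The resulting identities hold as operators on the whole Fock space $\mathcal F_B\otimes\mathcal F_F$, so the use of Conjecture \ref{Pieri-con} is only through these explicit formulas.

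The one point to check carefully is the super-commutator bookkeeping. The claim is that $[[X,Y]_+,X]=[X^2,Y]$ holds for odd $X,Y$ with the ordinary commutator on the left. Expanding gives $XYX+YX^2-X^2Y-XYX = YX^2-X^2Y$, which is $-[X^2,Y]$. The overall sign is irrelevant once $X^2=0$. I also need to confirm that the constant shifts ($1$ and the $K_{i,0}^\pm$ factors) commute with everything, which is clear.

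I expect no real obstacle. A direct check via the explicit bilinear formula with $\widetilde C_{k,\ell}$ would be much messier, and the nilpotency route sidesteps it entirely.
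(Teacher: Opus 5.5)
Your argument is correct, and it takes a genuinely different and shorter route than the paper's.

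\textbf{The paper's route.} The paper works with the expanded form of $H_{2,-1}$: the bosonic piece $\sum_k c_k[p]\,\widetilde c_k[\partial/\partial p]$ plus the fermion bilinear with coefficients $\widetilde C_{k,\ell}=[\widetilde c_k,c_\ell]$. It commutes this term by term with $E_{2,0}$ and $F_{2,-1}$. The terms with a single $\frac{\partial}{\partial\pi}$ (resp.\ a single $\pi$) cancel. What remains is $\sum[[\widetilde c_k,c_\ell],c_m]\,\pi_k\frac{\partial}{\partial\pi_\ell}\frac{\partial}{\partial\pi_m}$ (resp.\ the analogous cubic expression $\pi_m\pi_k\frac{\partial}{\partial\pi_\ell}$). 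This vanishes because the Jacobi identity and $[c_\ell,c_m]=0$ make the coefficient symmetric in $\ell,m$.

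\textbf{Your route.} You use the identities $[[X,Y]_+,X]=-[X^2,Y]$ and $[[X,Y]_+,Y]=[X,Y^2]$ for odd $X,Y$, together with $E_{2,0}^2=F_{2,-1}^2=E_{1,-1}^2=F_{1,0}^2=0$. The paper's symmetric-in-$(\ell,m)$ cancellation is exactly your nilpotency reappearing after expansion.

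\textbf{What each buys.} Your version needs neither the formula for $\widetilde C_{k,\ell}$ nor any property of $c_k,\widetilde c_k$ beyond mutual commutativity within each family. It also makes clear that the result is a formal consequence of $H_{i,-1}$ being an anticommutator of square-zero charges. The paper's computation works directly on the explicit bilinear Hamiltonian. That is what underlies its remark that the fermionic part is fixed by commutativity once the bosonic Ruijsenaars--Macdonald part is given. It is also the template reused for Proposition~\ref{Prop:commutativity+}. There $H_{i,+1}$ comes from the integral formula of \cite{Galakhov:2025phf}. Relations such as $[H_{2,+1},E_{2,0}]=0$ involve a charge that is not one of the pair producing $H_{i,+1}$, so nilpotency alone would not suffice.

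\textbf{Minor slips.} Two small errors do not affect the conclusion. With $A=-[E_{2,0},F_{2,-1}]_+$ one has $[A,E_{2,0}]=+[E_{2,0}^2,F_{2,-1}]$. From $[E_{1,-1},F_{1,0}]_+=-u^{-1}H_{1,-1}$ one gets $-uH_{1,-1}=u^2[E_{1,-1},F_{1,0}]_+$, not $u[E_{1,-1},F_{1,0}]_+$.
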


\begin{proof}
We show the relation for $H_{2,-1}$. The relation for $H_{1,-1}$ can be proved similarly. 
Recall that 
\begin{equation}
E_{2,0} = \sum_{m=1}^\infty c_m[p] \frac{\partial}{\partial \pi_m}, \qquad
F_{2,-1} = -\sum_{m=1}^\infty \pi_m \cdot \widetilde{c}_m \left[ \partial/ \partial p \right].
\end{equation}
We have
\begin{align}
& (t/q)^{\frac{1}{2}}u \left[ H_{2,-1}, E_{2,0} \right] \CR
&= \sum_{k=1}^\infty \sum_{m=1}^\infty c_k[p] \Big[ \widetilde{c}_k[\partial/\partial p]
, c_m[p] \Big] \frac{\partial}{\partial \pi_m} \CR
& \qquad +  \sum_{\ell=1}^\infty \sum_{k=1}^\infty \sum_{m=1}^\infty 
\left( c_m[p] \widetilde{C}_{k,\ell}[p, \partial/ \partial p] 
\left[ \pi_k \frac{\partial}{\partial \pi_\ell}, \frac{\partial}{\partial \pi_m} \right] 
+ \left[ \widetilde{C}_{k,\ell}[p, \partial/ \partial p] , c_m[p] \right]  
\pi_k \frac{\partial}{\partial \pi_\ell}\frac{\partial}{\partial \pi_m} \right) \CR
&= \sum_{\ell=1}^\infty \sum_{k=1}^\infty \sum_{m=1}^\infty 
\Big[ \left[ \widetilde{c}_k[\partial/\partial p], c_\ell[p] \right] , c_m[p] \Big]  
\pi_k \frac{\partial}{\partial \pi_\ell}\frac{\partial}{\partial \pi_m}.
\end{align}
Since $\Big[c_\ell[p], c_m[p] \Big]=0$, the Jacobi identity implies 
$\Big[ \left[ \widetilde{c}_k[\partial/\partial p], c_\ell[p] \right] , c_m[p] \Big]$
is symmetric in $\ell$ and $m$. Hence we have the desired result. 
We also have 
\begin{align}
& u \left[ H_{2,-1}, F_{2,-1} \right] \CR
&= \sum_{k=1}^\infty \sum_{m=1}^\infty  \Big[ c_k[p], \widetilde{c}_m [\partial/\partial p] \Big] 
\widetilde{c}_k[\partial/\partial p]\pi_m \CR
& \qquad +  \sum_{\ell=1}^\infty \sum_{k=1}^\infty \sum_{m=1}^\infty 
\left( \widetilde{C}_{k,\ell}[p, \partial/ \partial p] \widetilde{c}_m [\partial/\partial p]
\left[ \pi_k \frac{\partial}{\partial \pi_\ell}, \pi_m \right] 
+ \left[ \widetilde{C}_{k,\ell}[p, \partial/ \partial p] , c_m[p] \right]  
 \pi_m  \pi_k \frac{\partial}{\partial \pi_\ell}\right) \CR
 &= - \sum_{\ell=1}^\infty \sum_{k=1}^\infty \sum_{m=1}^\infty 
 \Big[ \left[ c_\ell[p], \widetilde{c}_{k} [\partial/\partial p]\right], 
 \widetilde{c}_m [\partial/\partial p]  \Big]
 \pi_m  \pi_k \frac{\partial}{\partial \pi_\ell}.
\end{align}
By the same argument above we see that the right hand side vanishes. 
\end{proof}

The proof of Prop \ref{Prop;commutativity} tells that assuming that the bosonic part of $H_{2,-1}$
and $H_{1,-1}$ is the same as the $(q,t)$-inverted form of Ruijsenaars-Macdonald Hamiltonian,
we can fix the fermionic part by the commutativity with the super charges. 

%%%%%%%%%%%%%%%%%%%%%%%%%%%%%%%%%%%%%%%%%%%%%%%%%%%%%%%%%%%%%%%%%%%%%%%%%%%%%

\subsection{Comparison of the super charges with \cite{Alarie-Vezina:2019ohz}}

In \cite{Alarie-Vezina:2019ohz} they defined the super charges
\begin{equation}
\mathcal{Q}_1 := \sum_{i=1}^N \theta_i \tau_i^{-1}, \qquad
\mathcal{Q}_2 := \sum_{i=1}^N A_i(t^{-1}) \frac{\partial}{\partial \theta_i},
\end{equation}
where $\tau_i$ is the $q$-shift operator of $x_i \to qx_i$ and 
\begin{equation}
A_i(t) := \prod_{j \neq i} \frac{tx_i - x_j}{x_i - x_j}.
\end{equation}
One of the super Hamiltonian is given by the anti-commutator
\begin{equation}
\mathcal{D}_{1,N} = t^{N-1} \left[ \mathcal{Q}_1, \mathcal{Q}_2 \right]_{+}.
\end{equation}

\begin{prp}\label{finiteN-supercharge}
\begin{equation}
 u E_{1,-1} = \mathcal{Q}_1, \qquad
u F_{1,0} = t^{N-1} (1-t)\mathcal{Q}_2 + t^N \frac{\partial}{\partial \pi_1}.
\end{equation}
\end{prp}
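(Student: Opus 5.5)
The plan is to verify both identities directly in $N$ variables. We restrict the power-sum realizations of $E_{1,-1}$ and $F_{1,0}$ obtained above to supersymmetric polynomials in $(x_1,\ldots,x_N;\theta_1,\ldots,\theta_N)$, and compare them with the explicit operators $\mathcal{Q}_1,\mathcal{Q}_2$. Throughout, a differential operator in $p_n,\pi_k$ acts on a supersymmetric polynomial by the chain rule. The basic dictionary is
\begin{equation*}
\frac{\partial}{\partial \theta_i} = \sum_{k\geq 1} x_i^{k-1}\frac{\partial}{\partial \pi_k}
\end{equation*}
on functions of the $\pi_k$, together with the fact that exponentials of $\partial/\partial p_n$ act as translation operators on the $p_n$.

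For $E_{1,-1}$ we start from $uE_{1,-1}=\sum_{\ell\ge1}\pi_\ell\,\widetilde{c}_{\ell-1}[\partial/\partial p]$ and rewrite it as
\begin{equation*}
\sum_{i=1}^N \theta_i \sum_{m\geq 0} x_i^m\, \widetilde{c}_m[\partial/\partial p] .
\end{equation*}
By \eqref{tc_k-def}, the inner sum equals $\exp\bigl(\sum_n (q^{-n}-1)x_i^n \partial/\partial p_n\bigr)$, i.e.\ the generating function of $\widetilde{c}_m$ evaluated at $z^{-1}=x_i$. This exponential is the shift $p_n\mapsto p_n+(q^{-n}-1)x_i^n$, which is exactly the effect of $\tau_i^{-1}$ on the bosonic power sums.

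Under $\tau_i^{-1}$ the fermionic power sums also change, $\pi_k\mapsto \pi_k+(q^{1-k}-1)\theta_i x_i^{k-1}$. However, every correction term contains $\theta_i$, so it is annihilated after left multiplication by $\theta_i$. Hence $\theta_i\tau_i^{-1}=\theta_i\exp\bigl(\sum_n(q^{-n}-1)x_i^n\partial/\partial p_n\bigr)$ on supersymmetric functions, and summing over $i$ gives $uE_{1,-1}=\mathcal{Q}_1$. The only delicate point here is justifying that the $\pi$-shifts drop out, which is the nilpotency argument just given.

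For $F_{1,0}$ we have $uF_{1,0}=\sum_{k\ge1}c_{k-1}[p]\,\partial/\partial\pi_k$. On the other side, the dictionary above gives
\begin{equation*}
\mathcal{Q}_2=\sum_{k\ge1}\Bigl(\sum_i A_i(t^{-1})x_i^{k-1}\Bigr)\frac{\partial}{\partial\pi_k}.
\end{equation*}
The claim therefore reduces to the finite-$N$ identity
\begin{equation*}
c_m[p]=t^N\delta_{m,0}+t^{N-1}(1-t)\sum_{i=1}^N A_i(t^{-1})\,x_i^m ,\qquad m\ge 0 .
\end{equation*}

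This identity is the main step, and I would prove it by partial fractions in $z$ applied to $G(z)=\prod_{j=1}^N\frac{1-tzx_j}{1-zx_j}$, the generating function \eqref{c_k-def} truncated to $N$ variables. The function $G$ has only simple poles, at $z=x_i^{-1}$, and $G(\infty)=t^N$. So we can write
\begin{equation*}
G(z)=t^N+\sum_i \frac{R_i}{1-zx_i},
\qquad
R_i=(1-t)\prod_{j\ne i}\frac{x_i-tx_j}{x_i-x_j}=(1-t)\,t^{N-1}A_i(t^{-1}).
\end{equation*}
Expanding each $1/(1-zx_i)$ around $z=0$ and reading off the coefficient of $z^m$ gives the identity, with the constant $t^N$ producing precisely the extra term $t^N\,\partial/\partial\pi_1$ at $k=1$. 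As a consistency check, at $m=0$ the identity reads $c_0=1=t^N+\sum_i R_i$, which is the same as evaluating at $z=0$.
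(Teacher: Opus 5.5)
Your proof is correct and follows the paper's route: you rewrite $\pi_\ell$ and $\partial/\partial\theta_i$ in terms of $\theta_i x_i^{\ell-1}$ and $x_i^{k-1}\partial/\partial\pi_k$, identify $\sum_m x_i^m\widetilde{c}_m$ with $\tau_i^{-1}$, and reduce the second identity to $c_m[p]=t^N\delta_{m,0}+(1-t)\sum_i\prod_{j\ne i}\frac{x_i-tx_j}{x_i-x_j}\,x_i^m$. The differences are minor: you derive this identity by partial fractions instead of citing Macdonald III-(2.9)--(2.10), and your nilpotency remark explains why the shift of the $\pi_k$ under $\tau_i^{-1}$ is harmless, which the paper leaves implicit when it applies the corollary $\tau_i^{-1}=\sum_n\widetilde{c}_n x_i^n$.
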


To see the agreement with the original form of the Ruijsenaars-Macdonald Hamiltonian \eqref{RM-Hamiltonian} 
and to prove the above proposition, we need the following formula.

\begin{lem}[\cite{Awata:1995zk}\footnote{Eqs.(48) and (49).}]
\label{lem:powersum-rep}
\begin{align}\label{q-shift}
\tau_i &=~:\!\exp \left( \sum_{n=1}^\infty (q^n-1)x_i^n \frac{\partial}{\partial p_n} \right)\!: \CR
&= \oint \frac{dz}{z} \sum_{n=0}^\infty x_i^n z^n \exp \left( \sum_{k=1}^\infty (q^k-1)\frac{\partial}{\partial p_k}z^{-k} \right).
\end{align}
\begin{equation}\label{t-identity}
\sum_{i=1}^N \prod_{j \neq i} \frac{tx_i - x_j}{x_i - x_j} \sum_{n=0}^\infty x_i^n z^n = \frac{t^N}{t-1}
\exp \left( \sum_{k=1}^\infty \frac{1-t^{-k}}{k} p_k z^k \right) - \frac{1}{t-1}.
\end{equation}
\end{lem}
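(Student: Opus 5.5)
The lemma consists of two independent identities, and I would prove them separately: \eqref{q-shift} by a Taylor-expansion argument in the power sums, and \eqref{t-identity} by a residue computation.

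\textbf{Proof of \eqref{q-shift}.} I regard a function $f$ of $x_1,\dots,x_N$ as a function of the power sums $p_n$.
\begin{itemize}
\item Under $x_i\to qx_i$ each power sum shifts as $p_n\mapsto p_n+(q^n-1)x_i^n$, while all other variables are unchanged.
\item Since only the $p_n$ move, Taylor's formula for a translation in the variables $p_n$ gives
\[
\tau_i f=\exp\Big(\sum_{n\ge1}(q^n-1)x_i^n\,\partial/\partial p_n\Big)f .
\]
\item The normal ordering symbol is harmless here, because the exponent contains only derivatives in $p_n$ and multiplication by the scalar $x_i^n$; the scalar commutes with the derivatives.
\end{itemize}
For the contour-integral form, I expand
\[
\exp\Big(\sum_k (q^k-1)\,\partial/\partial p_k\, z^{-k}\Big)=\sum_{m\ge0}\hat c_m z^{-m}.
\]
Pairing this with $\sum_n x_i^n z^n$ and taking the $z^0$ coefficient selects $\sum_m x_i^m\hat c_m$. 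This sum is exactly the previous exponential with $z^{-1}$ replaced by $x_i$, since $\hat c_m$ is the degree-$m$ part in the formal variable $z^{-1}$. No real obstacle arises in this part.

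\textbf{Proof of \eqref{t-identity}.} I would use the rational function in $y$
\[
R(y)=\prod_{j=1}^N\frac{ty-x_j}{y-x_j}\cdot\frac{1}{y(1-yz)},
\]
treating $z$ as a small parameter. I then apply the fact that the residues of a rational function on $\mathbb{P}^1$ sum to zero, computing each residue in turn.
\begin{itemize}
\item At $y=x_i$: the residue is $(t-1)x_i\prod_{j\ne i}\frac{tx_i-x_j}{x_i-x_j}\cdot\frac{1}{x_i(1-x_iz)}=(t-1)A_i(t)\sum_n x_i^nz^n$.
\item At $y=0$: the product equals $1$, so the residue is $1$.
\item At $y=1/z$: the factor $\frac{1}{y(1-yz)}$ has residue $-1$. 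The product evaluates to $\prod_j\frac{t-x_jz}{1-x_jz}=t^N\prod_j\frac{1-t^{-1}x_jz}{1-x_jz}$. Using $\log\frac{1-a z}{1-z x}$-type expansions, this equals $t^N\exp\big(\sum_k\frac{1-t^{-k}}{k}p_kz^k\big)$.
\item At $y=\infty$: $R(y)\sim -t^N/(zy^2)$, so there is no residue.
\end{itemize}
Setting the sum of the residues to zero gives
\[
(t-1)\sum_iA_i(t)\sum_n x_i^nz^n+1-t^N\exp(\cdots)=0,
\]
which is exactly \eqref{t-identity}. The only step requiring care is the choice of the auxiliary function $R(y)$, in particular the factor $1/y$ producing the constant term $-\frac{1}{t-1}$, together with the sign bookkeeping at $y=1/z$. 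Everything else is routine.
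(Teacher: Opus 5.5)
Your proof is correct. For \eqref{q-shift} you argue as the paper does: the paper records that $x_i\to qx_i$ shifts $p_n$ by $(q^n-1)x_i^n$ and reads off the Taylor exponential. Your remark that the normal ordering is harmless amounts to the convention that $\partial/\partial p_n$ never acts on $x_i$, which is exactly what the colons encode. Your $z^0$-coefficient extraction then gives the contour form.

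For \eqref{t-identity} your route is different. The paper instead quotes Macdonald's formula $g_r(x;t)=(1-t)\sum_i x_i^r\prod_{j\neq i}\frac{x_i-tx_j}{x_i-x_j}$ ($r\geq 1$) for the coefficients of $\prod_i\frac{1-tx_iu}{1-x_iu}$. It adds the separate identity $\sum_i\prod_{j\neq i}\frac{x_i-tx_j}{x_i-x_j}=\frac{1-t^N}{1-t}$ for the $n=0$ term, which gives $(1-t)\sum_i\prod_{j\neq i}\frac{x_i-tx_j}{x_i-x_j}\sum_n x_i^nu^n=\exp\bigl(\sum_n\frac{1-t^n}{n}p_nu^n\bigr)-t^N$. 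It then substitutes $t\to t^{-1}$, using $\prod_{j\neq i}\frac{x_i-t^{-1}x_j}{x_i-x_j}=t^{1-N}A_i(t)$.

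Your single residue computation for $R(y)$ produces both ingredients at once:
\begin{itemize}
\item the poles at $y=x_i$ and $y=1/z$ are the partial-fraction decomposition underlying Macdonald's formula;
\item the pole at $y=0$ supplies the constant $-\frac{1}{t-1}$ without a separate identity;
\item since you work with $A_i(t)$ directly, no inversion of $t$ is needed.
\end{itemize}
Your version is self-contained. The paper's is shorter given the reference, and it makes explicit the link to $g_r(x;t)$. That link is reused immediately in the Corollary expressing $c_k[p]$ as $(1-t)\sum_i\prod_{j\neq i}\frac{x_i-tx_j}{x_i-x_j}x_i^k$.

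The only thing worth adding is a genericity caveat. Apply the residue theorem for distinct nonzero $x_j$ and $z\neq 0$ small enough that $1/z\neq x_j$. Then extend to all $x$, since each coefficient of $z^n$ is a polynomial identity in $x$.
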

\begin{proof}
From $\tau_i p_n = (p_n + (q^n-1)x_i) \tau_i$, we see the representation \eqref{q-shift}. 
To prove the second formula we use Macdonald III-(2.9) and (2.10);
\begin{equation}
\sum_{r=0}^\infty g_r(x,t) u^r = \prod_{i=1}^N \frac{1-x_i tu}{1-x_i u}
= \exp \left( \sum_{n=1}^\infty \frac{1 - t^n}{n} p_n(x) u^n \right), 
\end{equation}
where
\begin{equation}
g_0(x,t)  =1, \qquad g_r(x,t) = (1-t) \sum_{i=1}^N x_i^r \prod_{j \neq i} \frac{x_i -t x_j}{x_i-x_j}, \quad r \geq 1.  
\end{equation}
We also have\footnote{All the residues on the left hand side vanish and hence it only depends on $t$.}
\begin{equation}
\sum_{i=1}^N \prod_{j \neq i} \frac{x_i-tx_j}{x_i-x_j} = \frac{1-t^N}{1-t}. 
\end{equation}
Combining these, we obtain 
\begin{equation}
(1-t) \sum_{i=1}^N \prod_{j \neq i} \frac{x_i-tx_j}{x_i-x_j} \sum_{n=0}^\infty  x_i^n u^n 
= \exp \left( \sum_{n=1}^\infty \frac{1 - t^n}{n} p_n(x) u^n \right) - t^N,
\end{equation}
which implies \eqref{t-identity} after $t \to t^{-1}$.
\end{proof}
With the definitions \eqref{c_k-def} and \eqref{tc_k-def}  we obtain
\begin{cor}
\begin{align}
\label{q-shift-powersum}
\tau_i^{-1} &= \sum_{n=0}^\infty \widetilde{c}_n[\partial/ \partial p] x_i^n,
\\
c_k[p] &= (1-t)  \sum_{i=1}^N \prod_{j \neq i} \frac{x_i-tx_j}{x_i-x_j} x_i^k, \qquad k \geq 1.
\end{align}
\end{cor}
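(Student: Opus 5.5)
The corollary follows by specializing the two ingredients of Lemma~\ref{lem:powersum-rep} to the generating functions \eqref{tc_k-def} and \eqref{c_k-def}. I treat the two formulas separately.

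For \eqref{q-shift-powersum} I start from the first formula \eqref{q-shift} and replace $q$ by $q^{-1}$. The operator $\tau_i^{-1}$ satisfies $\tau_i^{-1}p_n=(p_n+(q^{-n}-1)x_i^n)\tau_i^{-1}$. The argument in the proof of the lemma then gives
\begin{equation*}
\tau_i^{-1}=\;:\!\exp\Big(\sum_{n=1}^\infty (q^{-n}-1)x_i^n\frac{\partial}{\partial p_n}\Big)\!: .
\end{equation*}
This is the shift operator $p_n\mapsto p_n+(q^{-n}-1)x_i^n$ acting on functions of the $p_n$. That shift is exactly the effect of $x_i\mapsto q^{-1}x_i$ on $p_n=\sum_j x_j^n$.

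The exponent contains only derivatives, which mutually commute, so the normal ordering is trivial. I then compare with the generating function \eqref{tc_k-def}. The exponential above is obtained from \eqref{tc_k-def} by the formal substitution $z^{-1}\mapsto x_i$, which is legitimate because $x_i$ commutes with every $\partial/\partial p_n$. Reading off the coefficient of $x_i^n$ gives $\tau_i^{-1}=\sum_{n\ge0}\widetilde c_n[\partial/\partial p]\,x_i^n$. Equivalently, one can apply the contour-integral form in \eqref{q-shift} with $q\to q^{-1}$ and extract the $z^0$ term.

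For the second formula I reuse the Macdonald identity quoted in the proof of Lemma~\ref{lem:powersum-rep}:
\begin{equation*}
\sum_{r\ge0}g_r(x,t)u^r=\prod_{i=1}^N\frac{1-x_itu}{1-x_iu}=\exp\Big(\sum_{n\ge1}\frac{1-t^n}{n}p_nu^n\Big).
\end{equation*}
The right-hand side is precisely the generating function \eqref{c_k-def} of the $c_k[p]$, restricted to $N$ variables. Comparing coefficients of $u^k$ gives $c_k[p]=g_k(x,t)$. For $k\ge1$ the explicit expression for $g_r$ is exactly the claimed formula.

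The only point needing care is the first step. There I must check that identifying the $p_n$-shift operator with the $q^{-1}$-dilation of $x_i$ is compatible with treating the $p_n$ as independent generators in the projective limit. In that limit the operator acts on polynomials in the $p_n$, and the substitution rule $p_n\mapsto p_n+(q^{-n}-1)x_i^n$ is well defined there. So this step reduces to the commutation relation already used in the lemma.
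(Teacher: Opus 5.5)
Your proposal is correct and follows essentially the paper's route. The paper obtains the first formula by taking \eqref{q-shift} with $q\to q^{-1}$ and comparing it with the generating function \eqref{tc_k-def}. It obtains the second by identifying \eqref{c_k-def} with Macdonald's generating function for $g_r(x,t)$, which is quoted in the proof of Lemma~\ref{lem:powersum-rep}; this gives $c_k[p]=g_k(x,t)$ for $k\ge 1$.
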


Now we can show Prop \ref{finiteN-supercharge}.
\begin{align}
 u E_{1,-1} &= \sum_{n=1}^\infty \widetilde{c}_{n-1}[p] \pi_n \CR
 &= \sum_{i=1}^N \theta_i \left( \sum_{n=1}^\infty \widetilde{c}_{n-1}[p] x_i^{n-1} \right) \CR
 &= \sum_{i=1}^N \theta_i \tau_i^{-1} = \mathcal{Q}_1.
\end{align}
For the second equation, we compute
\begin{align}
\mathcal{Q}_2 &= t^{1-N} \sum_{i=1}^N \prod_{j \neq i} \frac{x_i -t x_j}{x_i - x_j} 
\left( \sum_{n=1}^\infty x_i^{n-1} \frac{\partial}{\partial \pi_n} \right) \CR
&= t^{1-N} (1-t)^{-1} \sum_{k=1}^\infty c_k[p] \frac{\partial}{\partial \pi_{k+1}}
+ t^{1-N} \frac{1-t^N}{1-t} \frac{\partial}{\partial \pi_{1}}.
\end{align}
Hence
\begin{align}
(1-t) t^{N-1} \mathcal{Q}_2 &= \sum_{k=1}^\infty c_k[p] \frac{\partial}{\partial \pi_{k+1}}
+ (1-t^N)  \frac{\partial}{\partial \pi_{1}} \CR
&= u F_{1,0} - t^N   \frac{\partial}{\partial \pi_{1}}. 
\end{align}

Prop \ref{finiteN-supercharge} implies
\begin{align}
u H_{1,-1} &= - \Big[ uE_{1,-1}, uF_{1,0} \Big] \CR
&= - \Big[ \mathcal{Q}_1, t^{N-1}(1-t)\mathcal{Q}_2 + t^N \frac{\partial}{\partial \pi_1} \Big] \CR
&= (t-1) \mathcal{D}_{1,N} - t^N.
\end{align}
Thus, we recover \eqref{finite-H}. 

In \cite{Alarie-Vezina:2019ohz} the Hamiltonian $\mathcal{D}_{1,N}$ is expressed as an anti-commutator of the super charges, but 
the second Hamiltonian $\mathcal{D}_{2,N}$ is not. In this paper we have shown that both Hamiltonians 
are in fact anti-commutator of the super charges
based on the structure of the quantum toroidal algebra. 

%%%%%%%%%%%%%%%%%%%%%%%%%%%%%%%%%%%%%%%%%%%%%%%%%%%%%%%%%%%%%%%%%%%%%%%%%%%%%%%%%%%%%%

\subsection{Comparison with \cite{Galakhov:2025phf} }
\label{subsec:VS-Galakhov}

In \cite{Galakhov:2025phf} the four Hamiltonians for the super Macdonald polynomials 
were proposed. Among them two Hamiltonians are bi-linear in fermions. Let us look at them (See eq.(100) in  \cite{Galakhov:2025phf});
\begin{align}
\widehat{\mathcal{H}}_{2}^{-} &= \oint \frac{dw}{w} \langle \varnothing \vert V^{(+)}(w)  V^{(-)}(w) \vert \varnothing \rangle_B, 
\\
\widehat{\mathcal{H}}_{1}^{-} &= \oint \frac{dw}{w} \langle \varnothing \vert \widetilde{V}^{(+)}(w)  \widetilde{V}^{(-)}(w) \vert \varnothing \rangle_B, 
\end{align}
where\footnote{We change the convention $(q^2, t^2) \to (q,t)$ from \cite{Galakhov:2025phf}
and the notation of the fermionic power sum from $\theta_k$ to $\pi_k$.}
\begin{align}
V^{(+)}(w) &= \exp \left[ \sum_{k=1}^\infty \left( \frac{1-t^k}{k}p_k w^{2k} s^{-k} + (1-t)w^{2k-1}\pi_k \nu \right) \right],
\\
V^{(-)}(w) &= \exp \left[ \sum_{k=1}^\infty \left( (q^{-k} -1) \frac{\partial}{\partial p_k} w^{-2k}
+ (q^{-1}-1) w^{-2k+1} \frac{\partial}{\partial \pi_k} \nu^\dagger s^{k-1} \right) \right].
\end{align}
$s$ is an auxiliary bosonic operator and $\nu$ and $\nu^\dagger$ are anti-commuting fermionic operators. 
We have $\widetilde{V}^{(+)}(w)= V^{(+)}(w)$ and $\widetilde{V}^{(-)}(w)$ is defined by 
replacing the factor $s^{k-1}$ in the last term of $V^{(-)}(w)$ with $s^{k-2}$. 
The rule of computing the vacuum expectation values is
\begin{equation}\label{VEV}
\langle \varnothing \vert s^a \vert \varnothing \rangle =1, \qquad
\langle \varnothing \vert \nu \nu^\dagger s^b \vert \varnothing \rangle =
\begin{cases}
\frac{1-(qt)^{b+1}}{q^b(1-qt)}, \quad b \geq 0, \\
0, \quad \hbox{otherwise}.
\end{cases}
\end{equation}
Note that  the operator $s$ appears only in the coefficients of $p_k$ and $\frac{\partial}{\partial \pi_k}$. 
Hence, the selection rule in \eqref{VEV} means we cannot have bosonic rows of higher degree
after removing a fermionic row. Expanding the exponentials in the anti-commuting variables $\nu$ and $\nu^\dagger$,
we have
\begin{align}
V^{(+)}(w) &= \left( 1  +  (1-t) \sum_{k=1}^\infty w^{2k-1}\pi_k \nu \right)
\cdot \exp \sum_{k=1}^\infty \left( \frac{1-t^k}{k}p_k w^{2k} s^{-k} \right),
\\
V^{(-)}(w) &= \left( 1+ (q^{-1}-1) \sum_{k=1}^\infty  w^{-2k+1} \frac{\partial}{\partial \pi_k} \nu^\dagger s^{k-1}\right)
\cdot \exp \sum_{k=1}^\infty \left( (q^{-k} -1) \frac{\partial}{\partial p_k} w^{-2k}\right).
\end{align}

Then the fermion bi-linear terms of $\hat{\mathcal{H}}_{2}^{-}$ are
\begin{equation}\label{F-bi-linear}
\hat{\mathcal{H}}_{2}^{-,F} = (1-t)(q^{-1}-1) \sum_{k,\ell =1}^\infty 
C_{k, \ell}[p, \partial/ \partial p] \left( \pi_k \frac{\partial}{\partial \pi_\ell} \right),
\end{equation}
with
\begin{align}\label{bi-linear-coeff}
C_{k,\ell}[p, \partial/ \partial p]
&= \sum_{n,m=0}^\infty \oint \frac{dw}{w} \langle \varnothing \vert w^{2(k-\ell+n-m)}s^{\ell-1-n}
\cdot c_n[p] \cdot \widetilde{c}_m \left[ \partial/ \partial p \right] \cdot
\nu \nu^\dagger\vert \varnothing \rangle_B \CR
&= \sum_{n=0}^{\ell-1} [[\ell-n]]_{(q^{-1},t)} \cdot c_n[p] \cdot \widetilde{c}_{k-\ell+n} \left[ \partial/ \partial p \right],
\end{align}
where we have used \eqref{VEV} and the fact that $\widetilde{c}_{m}=0$ for $m <0$.  
Thus we see that \eqref{bi-linear-coeff} agrees with \eqref{Fbi-linear}.

Similarly we have
\begin{equation}
\hat{\mathcal{H}}_{1}^{-,F} = \sum_{k,\ell =1}^\infty  \widetilde{C}_{k, \ell}[p, \partial/ \partial p] 
\left( \pi_{k+1} \frac{\partial}{\partial \pi_{\ell+1}} \right).
\end{equation}
On the other hand the difference of $\hat{\mathcal{H}}_{1}^{-}$ and $\hat{\mathcal{H}}_{2}^{-}$ is that
the factor $s^{k-1}$ in $V^{(-)}(w)$ is replaced by $s^{k-2}$. 
Hence, the fermion bi-linear terms of $\hat{\mathcal{H}}_{1}^{-}$ are
\begin{equation}
\hat{\mathcal{H}}_{1}^{-,F} = (1-t)(q^{-1}-1) \sum_{k,\ell =1}^\infty 
C^\prime_{k, \ell}[p, \partial/ \partial p] \left( \pi_k \frac{\partial}{\partial \pi_\ell} \right),
\end{equation}
with
\begin{align}
C^\prime_{k,\ell}[p, \partial/ \partial p]
&= \sum_{n,m=0}^\infty \oint \frac{dw}{w} \langle \varnothing \vert w^{2(k-\ell+n-m)}s^{\ell-2-n}
\cdot c_n[p] \cdot \widetilde{c}_m \left[ \partial/ \partial p \right] \cdot
\nu \nu^\dagger\vert \varnothing \rangle_B \CR
&= \sum_{n=0}^{\ell-2} [[\ell-n-1]]_{(q^{-1},t)} \cdot c_n[p] \cdot \widetilde{c}_{k-\ell+n} \left[ \partial/ \partial p \right].
\end{align}
By the shift of indices $k \to k+1$ and $\ell \to \ell \to \ell+1$, we see the agreement.

%%%%%%%%%%%%%%%%%%%%%%%%%%%%%%%%%%%%%%%%%%%%%%%%%%%%%%%%%%%%%%%%%%%%%%

\section{Pieri formula and the Hamiltonians $H_{1, +1}$ and $H_{2,+1}$}
\label{section:positive-modes}

\subsection{Hamiltonian $H_{2,+1}$} 

We have seen that the eigenvalues of $1- u(t/q)^{\frac{1}{2}} H_{2,-1}$ and $1 + u^{-1}(q/t)^{\frac{1}{2}} H_{2,+1}$
are related by the involution $(q,t) \to (q^{-1}, t^{-1})$. 
Since the bosonic Macdonald polynomials are invariant under the involution, 
the bosonic part of these Hamiltonians may be related by the involution $(q,t) \to (q^{-1}, t^{-1})$.
But the fermionic part is not.
Let us compute $1 + u^{-1}(q/t)^{\frac{1}{2}} H_{2,+1}$ by using the commutation relation;
\begin{align}
\left[ E_{2, +1}, F_{2,-1} \right] &= z^{-1}(z K_2^{+}(z) - K_2^{-}(z) \vert_{z^{-1}} = K_{2,0}^{+}H_{2,1} - K_{2,0}^{-} \CR
&= H_{2,+1} + (t/q)^{\frac{1}{2}}u = (t/q)^{\frac{1}{2}}u (1+ u^{-1}(q/t)^{\frac{1}{2}}H_{2,+1}).
\end{align}
Compare it with
\begin{equation}
\left[ E_{2, 0}, F_{2,-1} \right] = 1 - (t/q)^{\frac{1}{2}} u H_{2,-1},
\end{equation}
which we used to compute $H_{2,-1}$.

From the computation of the Pieri rule of the higher modes (see \eqref{E21-expansion}
in Appendix \ref{App:Higher-modes}), we find
\begin{align}
E_{2,+1} &= u(t/q)^{\frac{1}{2}} \cdot q (t^{-1}-1) \left( p_1 \frac{\partial}{\partial \pi_1} 
+ \frac{1}{2}q \left( (1- t^{-1}) p_1^2 + (1+ t^{-1}) p_2 \right) \frac{\partial}{\partial \pi_2} \right. \CR
& \qquad \left. 
+ \frac{1}{2}(q-1) \left( (1- t^{-1}) p_1^2 + (1+ t^{-1}) p_2 \right) 
\frac{\partial}{\partial p_1}\frac{\partial}{\partial \pi_1} 
%+ \epsilon_1 p_1 \frac{\partial}{\partial p_1}\frac{\partial}{\partial \pi_2}
+ \cdots \right). 
\end{align}
In contrast to \eqref{E-rep} a new feature here is the last term in $E_{2,+1}$ which involves the derivative $\frac{\partial}{\partial p_1}$. 
In fact the last term is necessary to reproduce the Pieri rule \eqref{E2nPieri-1} -- \eqref{E2nPieri-3}.
%
%Hence, 
By using \eqref{F-rep} the bosonic part of the commutator is
\begin{align}
& \left[ E_{2, +1}, F_{2,-1} \right]\vert_{\pi_k=0} \CR
&= u(t/q)^{\frac{1}{2}} \cdot (t^{-1}-1) \left( (q-1) p_1 \frac{\partial}{\partial p_1} 
 + \frac{1}{2}\left( (1- t^{-1}) p_1^2 + (1+ t^{-1}) p_2 \right)  (q^2-1)\frac{\partial}{\partial p_2} \right. \CR
& \qquad \left. - \frac{1}{4}\left( (1- t^{-1}) p_1^2 + (1+ t^{-1}) p_2 \right)(q-1)^2 \frac{\partial^2}{\partial p_1^2} 
 + \frac{1}{2} \left( (1- t^{-1}) p_1^2 + (1+ t^{-1}) p_2 \right)(q-1)^2 \frac{\partial^2}{\partial p_1^2} \right)
 + \cdots, 
\end{align}
which is the $(q,t)$-inverted version of $\left[ E_{2, 0}, F_{2,-1} \right]\vert_{\pi_k=0}$. 
The last term in $E_{2,+1}$ is responsible for the sign flip of the $\frac{\partial^2}{\partial p_1^2}$ term.

We can also compute the fermion bi-linear terms of $H_{2,-1}$ and up to level $\frac{3}{2}$ we obtain
\begin{align}
\overline{\mathcal{D}}_2 
&\simeq (q-1) p_1 \frac{\partial}{\partial p_1} 
+ (q-1) \left( 1 + (q-1)(1-t^{-1})p_1 \frac{\partial}{\partial p_1} \right)\pi_ 1 \frac{\partial}{\partial \pi_1} \CR
& \qquad + q(q-1)(1-t^{-1})p_1 \pi_1 \frac{\partial}{\partial \pi_2} 
+ t^{-1}(q-1)^2 \frac{\partial}{\partial p_1} \pi_2\frac{\partial}{\partial \pi_1} \CR
& \qquad \qquad + \frac{1}{2} \left( (q^2-1)(1+t^{-1}) - (1-q)^2 (1-t^{-1}) \right) \pi_2\frac{\partial}{\partial \pi_2}, 
\end{align}
which should be compared with
\begin{align}
\mathcal{D}_2 & \simeq (q^{-1}-1) p_1\frac{\partial}{\partial p_1} +
(q^{-1}-1) \pi_1\frac{\partial}{\partial \pi_1} + (q^{-1}-1)^2  \frac{\partial}{\partial p_1} \cdot 
\pi_2 \frac{\partial}{\partial \pi_1} \CR
& \qquad + (1-t)(q^{-1}-1) p_1 \cdot \pi_1 \frac{\partial}{\partial \pi_2}
+ \frac{1}{2} \left( (1+t)(q^{-2}-1) + (1-t)(q^{-1}-1)^2 \right)\pi_2 \frac{\partial}{\partial \pi_2}.
\end{align}
Up to level two the only super Macdonald polynomial which is not invariant under $(q,t) \to (q^{-1}, t^{-1})$ is
\begin{equation}
\mathcal{M}_{(\frac{3}{2})} =  \frac{q(1-t)}{1-qt} p_1 \pi_1 + \frac{1-q}{1-qt} \pi_2 
\sim q(1-t)p_1 \pi_1 + (1-q) \pi_2.
\end{equation}
It is instructive to check that both $\mathcal{D}_2$ and $\overline{\mathcal{D}}_2$ give the expected eigenvalues $q^{\mp 2}-1$. 
\begin{align}
\mathcal{D}_2 \cdot \mathcal{M}_{(\frac{3}{2})}
&\sim (q^{-1}-1)(1-t)\left(2q + (1-q) \right) p_1 \pi_1 + (q^{-1}-1)^2q(1-t)\pi_2 \CR
& \qquad + \frac{1}{2}((1+t)(q^{-2}-1) + (1-t)(q^{-1}-1)^2) (1-q) \pi_2 \CR
&= (q^{-2}-1)q(1-t)p_1 \pi_1 + (q^{-1}-1)\left( (1-t) + q^{-1}+ t \right) (1-q) \pi_2 \CR
&= (q^{-2}-1)(q(1-t)p_1 \pi_1 + (1-q) \pi_2). 
\end{align}
On the other hand 
\begin{align}
\overline{\mathcal{D}}_2 \cdot \mathcal{M}_{(\frac{3}{2})}
&\sim \left( 2(q-1) + (q-1)^2 (1-t^{-1}) + t^{-1}(q-1)^2 \right) q(1-t)p_1 \pi_1 \CR
& \qquad + \left( - (q-1)q(t^{-1}-1) + \frac{1}{2} \left( (q^2-1)(1+t^{-1}) - (1-q)^2 (1-t^{-1}) \right)\right) (1-q) \pi_2\CR
&= (q-1)(2+(q-1)) q(1-t)p_1 \pi_1 \CR
& \qquad + \left( q(q-1)(1-t^{-1}) + (q-1)  + t^{-1}q(q-1) \right) (1-q) \pi_2 \CR
&= (q^2-1)(q(1-t)p_1 \pi_1 + (1-q) \pi_2). 
\end{align}
%
%
%%%%%%%%%%%%%%%%%%%%%%%%%%%%%%%%%%%%%%%%%%%%%%%%%%%%%%%%%%%%%%%%%%%%%%%%%%%%%

\subsection{Hamiltonian $H_{1,+1}$}

$H_{1,+1}$ is given by the following  (anti)-commutators;
\begin{align}\label{H11-com}
\left[ E_{1,0}, F_{1,2} \right] &= z^2(z^{-1} K_1^{+}(z) - K_1^{-}(z) \vert_{z^{0}} \CR
&= K_{1,0}^{+} H_{1,+1} = H_{1,1},
\\
\left[ E_{1,1}, F_{1,1} \right] &= z (z^{-1} K_1^{+}(z) - K_1^{-}(z) \vert_{z^{-1}} \CR
&= K_{1,0}^{+} H_{1,+1} = H_{1,1} .  
\end{align}
Recall that $E_{1,0}=\pi$ and $F_{1,1}= \frac{\partial}{\partial \pi_1}$.
The computation of the eigenvalues motivates us to define
\begin{equation}
\overline{\mathcal{D}}_1 = \frac{1}{t^{-1}-1}(1- u^{-1}H_{1,1}). 
\end{equation}
First of all
\begin{equation}
\left[ E_{1,1}, F_{1,1} \right] = \left[ u(q + t^{-1}(1-q)) \pi_1 + \cdots , \frac{\partial}{\partial \pi_1}\right]
= u(q + t^{-1}(1-q)) + \cdots. 
\end{equation}
By the result \eqref{E11-level2}, which is valid up to level two;
\begin{equation}
\overline{\mathcal{D}}_1 = (q-1)p_1 \frac{\partial}{\partial p_1} + \cdots ,
\end{equation}
which gives the leading term of the expected result. Note that we cannot have such terms as 
$\pi_ 1 \frac{\partial}{\partial \pi_1}$, since $H_{1,1}$ is the anti-commutator with $E_{1,0}=\pi_1$.

Secondary,
\begin{align}\label{F21-level2}
F_{1,2} &= u \left( \frac{\partial}{\partial \pi_1} + q(1-t^{-1}) p_1 \frac{\partial}{\partial \pi_2} \right. \CR
& \qquad \left. + (1-t^{-1})(q-1) p_1 \frac{\partial}{\partial p_1}\frac{\partial}{\partial \pi_1}
- (1-t^{-1})(q-1) \pi_2 \frac{\partial}{\partial \pi_1}\frac{\partial}{\partial \pi_2}  + \cdots \right),
\end{align}
implies
\begin{equation}
u^{-1}H_{1,+1} = 1 + (1-t^{-1})(q-1) \left( p_1 \frac{\partial}{\partial p_1} +  \pi_2 \frac{\partial}{\partial \pi_2} \right).
\end{equation}
Hence,
\begin{equation}
\overline{\mathcal{D}}_1 = (q-1) \left( p_1 \frac{\partial}{\partial p_1} +  \pi_2 \frac{\partial}{\partial \pi_2} \right),
\end{equation}
which is the expected result up level $\frac{3}{2}$. It does not involve $\pi_1$ as is the case of $\mathcal{D}_1$.

Since the bosonic part of $H_{1,+1}$ is the $(q,t)$-inversion of the bosonic part of $H_{1,-1}$,
by taking account of \eqref{H11-com} we should have
\begin{equation}
F_{1,2} = u \sum_{k=0}^\infty  c_k^\vee[p] \widetilde{c}_k^\vee \left[ \partial/ \partial p \right] 
\frac{\partial}{\partial \pi_1} + \cdots,
\end{equation}
where
\begin{align}
\label{c-check-gen}
\sum_{k=0}^\infty c_k^\vee[p] z^k &:= \exp \left(\sum_{r=1}^\infty \frac{1-t^{-r}}{r} p_r z^r \right),
\\
\label{tc-check-gen}
\sum_{k=0}^\infty \widetilde{c}_k^\vee \left[ \partial/ \partial p \right] z^{-k}
&:= \exp \left( \sum_{n=1}^\infty (q^{n}-1) \frac{\partial}{\partial p_n} z^{-n} \right).
\end{align}
We can check up to level $\frac{5}{2}$ this is consistent with 
the following Pieri rules;\footnote{One may try to compute the higher level terms of $E_{1,+1}$.
But the Pieri rule is more involved.}
\begin{align*}
F_{1,n}  \cdot (1-q^2t)\mathcal{M}_{(\frac{5}{2})} &= u^{n-1} q^{2n}(1-t)\mathcal{M}_{(2)}, \\
F_{1,n} \cdot \mathcal{M}_{(2, \frac{1}{2})} &= u^{n-1}t^{1-n} \mathcal{M}_{(2)}, \\
F_{1,n} \cdot (1-qt^2)\mathcal{M}_{(\frac{3}{2},1)} &= u^{n-1} q^n (1-t^2) \mathcal{M}_{(1,1)}, \\
F_{1,n} \cdot \mathcal{M}_{(1, 1, \frac{1}{2})} &= u^{n-1}t^{2(1-n)} \mathcal{M}_{(1,1)}. 
\end{align*}
In fact these Pieri rules with $n=2$ imply that
\begin{align}\label{F12-level5/2}
u^{-1} F_{1,2} 
&= \left(1+ c_1^\vee[p] \widetilde{c}_1^\vee \left[ \partial/ \partial p \right]
+ c_2^\vee[p] \widetilde{c}_2^\vee \left[ \partial/ \partial p \right] 
+ (1-t^{-1})(q-1) \pi_2  \frac{\partial}{\partial \pi_2} \right) \frac{\partial}{\partial \pi_1} \CR
& \qquad \qquad + \left(c_1^\vee[p] + c_2^\vee[p]\widetilde{c}_1^\vee \left[ \partial/ \partial p \right]
\right) q \frac{\partial}{\partial \pi_2}
+ c_2^\vee[p] \cdot q^2 \frac{\partial}{\partial \pi_3} +\cdots . 
\end{align}

By taking the anti-commutator with $E_{1,0} = \pi_1$, we obtain the Hamiltonian $H_{1,+1}$ up to level 2;
\begin{align}
u^{-1}H_{1,+1} &= 1 + (1-t^{-1})(q-1) \left( p_1 \frac{\partial}{\partial p_1} 
+  \pi_2 \frac{\partial}{\partial \pi_2} \right) \CR
& \qquad \qquad + \frac{1}{2}\left( (1-t^{-2}) p_2 + (1-t^{-1})^2 p_1^2 \right)
\left( (q^2-1) \frac{\partial}{\partial p_2} + \frac{1}{2} (q-1)^2 \frac{\partial^2}{\partial p_1^2} \right). 
\end{align}

As a consistency check one can show that \eqref{F12-level5/2} anti-commutes with 
$E_{2,0}= \displaystyle{\sum_{k=1}^\infty} c_k[p] \frac{\partial}{\partial \pi_k}$;
\begin{equation}\label{E20-F12-com}
\left[ E_{2,0}, F_{1,2} \right]_{+} =0. 
\end{equation}
In fact the fermion cubic term 
$(1-t^{-1})(q-1) \pi_2  \frac{\partial}{\partial \pi_2} \frac{\partial}{\partial \pi_1}$
is necessary for the validity of \eqref{E20-F12-com}. 

%%%%%%%%%%%%%%%%%%%%%%%%%%%%%%%%%%%%%%%%%%%%%%%%%%%%%%%%%%%%%%%%%%%%%%

\subsection{Useful lemma}

Let us recall that $c_k^\vee[p]$ and $\widetilde{c}_k^\vee\left[ \partial/ \partial p \right] $ are defined 
by the generating functions \eqref{c-check-gen} and \eqref{tc-check-gen}. 
%\begin{align}
%\sum_{k=0}^\infty c_k^\vee[p] z^k &= \exp \left(\sum_{r=1}^\infty \frac{1-t^{-r}}{r} p_r z^r \right),
%\\
%\sum_{k=0}^\infty \widetilde{c}_k^\vee \left[ \partial/ \partial p \right] z^{-k}
%&= \exp \left( \sum_{n=1}^\infty (q^{n}-1) \frac{\partial}{\partial p_n} z^{-n} \right).
%\end{align}
For $n >0$ we have the bi-linear relations
\begin{equation}\label{bi-linear}
\sum_{\substack{k+\ell=n,\\ k, \ell \geq 0}} t^{-k} c_k [p] c_\ell^\vee [p] =0, \qquad
\sum_{\substack{k+\ell=n,\\ k, \ell \geq 0}} q^k \widetilde{c}_k \left[ \partial/ \partial p \right]
\widetilde{c}_\ell^\vee \left[ \partial/ \partial p \right] =0.
\end{equation}
In particular the second relation implies 
\begin{equation}\label{bi-linear2}
\widetilde{c}_n^\vee \left[ \partial/ \partial p \right]
= - \sum_{\ell=1}^{n-1} q^{n-\ell} \widetilde{c}_{n - \ell} \left[ \partial/ \partial p \right]
\widetilde{c}_\ell^\vee \left[ \partial/ \partial p \right].
\end{equation}

%%%%%%%%%%%%%%%%%%%%%%%%%%%%%%%%%%%%%%%%%%%%%%%%%%%%%%%%%%%%%%%%%%%%%%%%%%%%%%%%%%%%%%%%%%%%%%%%%%

The following summation formula is derived from the bi-linear relations \eqref{bi-linear}
and plays an important role in the computations in the next section. 
\begin{lem}\label{C-lemma}
For any $k \geq 0$,
\begin{align}
\label{C-sum1}
&\sum_{n=0}^\infty  t^{-n} c_n [p]  C_{k-n}[p, \partial/\partial p] =0, \\
\label{C-sum2}
&\sum_{n=0}^\infty C_{n-k}[p, \partial/\partial p]  q^n \widetilde{c}_n [\partial/\partial p] =0,
\end{align}
where
\begin{equation}\label{C-def}
C_\ell [p, \partial/\partial p] := \sum_{n=0}^\infty c_{n +\ell}^\vee[p]~\widetilde{c}_{n}^\vee \left[ \partial/ \partial p \right].
\end{equation}
\end{lem}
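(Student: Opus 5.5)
The plan is to expand $C_{k-n}$ through its definition \eqref{C-def}, exchange the order of summation so that the $\widetilde{c}^\vee$ factor is outermost, and recognize the remaining inner sum as one of the bi-linear relations \eqref{bi-linear}. Throughout I will use $c_j[p]=c^\vee_j[p]=\widetilde{c}_j=\widetilde{c}^\vee_j=0$ for $j<0$. With this convention $C_\ell$ is also defined for negative $\ell$, namely $C_{-r}=\sum_{m\ge r} c^\vee_{m-r}[p]\,\widetilde{c}^\vee_m[\partial/\partial p]$. This matters because in \eqref{C-sum1} the index $k-n$ becomes negative once $n>k$.

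For \eqref{C-sum1} I would first note an ordering point. The factor $c_n[p]$ is a multiplication operator placed to the left of $C_{k-n}$. Inside $C_{k-n}$ every $c^\vee$ already stands to the left of every $\widetilde{c}^\vee$. So no commutators arise, and the multiplication factors can be collected freely:
\begin{equation*}
\sum_{n\ge0} t^{-n}c_n[p]\,C_{k-n}
=\sum_{m\ge0}\Big(\sum_{n\ge0} t^{-n}c_n[p]\,c^\vee_{m+k-n}[p]\Big)\widetilde{c}^\vee_m[\partial/\partial p].
\end{equation*}
The inner sum is exactly the first relation in \eqref{bi-linear} of total degree $N=m+k$. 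Hence it vanishes whenever $m+k>0$. The exchange of sums is legitimate because, for fixed $m$, only finitely many $n$ contribute ($0\le n\le m+k$). On any fixed graded component only finitely many $m$ act nontrivially, since $\widetilde{c}^\vee_m$ lowers the degree by $m$.

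For \eqref{C-sum2} the argument is the mirror image. Now $q^n\widetilde{c}_n$ is a differential operator placed to the right of $C_{n-k}$, and every $\widetilde{c}^\vee$ in $C_{n-k}$ already sits to the right of every $c^\vee$. Since the $\widetilde{c}$ and $\widetilde{c}^\vee$ mutually commute (all are exponentials in the $\partial/\partial p_r$), I can write
\begin{equation*}
\sum_{n\ge0} C_{n-k}\,q^n\widetilde{c}_n
=\sum_{j\ge0} c^\vee_j[p]\Big(\sum_{n\ge0} q^n\widetilde{c}_n\,\widetilde{c}^\vee_{j+k-n}\Big).
\end{equation*}
Here I set $j=m+n-k$, so that $m=j+k-n$, and use the convention that the index of $\widetilde{c}^\vee$ is nonnegative. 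The inner sum is the second relation in \eqref{bi-linear} in degree $j+k$, and so it vanishes for $j+k>0$.

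The main obstacle, and the step I would check most carefully, is the boundary term of total degree zero, which occurs only when $k=0$ and $m=0$ (respectively $j=0$). In that case the inner sum reduces to $c_0c^\vee_0=1$ (respectively $\widetilde{c}_0\widetilde{c}^\vee_0=1$), because \eqref{bi-linear} is stated only for $n>0$. As literally worded, then, at $k=0$ both left-hand sides equal $1$ rather than $0$, while for every $k\ge1$ the argument above gives $0$ with no exceptional term. So for $k\ge1$ the proof is complete. For $k=0$ I would first re-check the index conventions of \eqref{C-def}, in particular whether the intended summation in \eqref{C-sum1} and \eqref{C-sum2} excludes $n=k$, which is the only term producing the constant. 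I would present the result as: the identity holds for all $k\ge0$ up to this explicit constant term $\delta_{k,0}$, which does not affect its use in the next section, where the inputs have positive degree.
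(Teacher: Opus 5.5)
Your argument is the paper's own proof: expand $C$ via \eqref{C-def}, collect the coefficient of $\widetilde{c}^\vee_m$ (resp.\ $c^\vee_j$), and kill it with \eqref{bi-linear}. Your inner sum $\sum_n q^n\widetilde{c}_n\widetilde{c}^\vee_{j+k-n}$ is the correct one; the paper's version drops the $\vee$.

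Your boundary observation is also correct, and it is an oversight in the paper, since \eqref{bi-linear} holds only in positive degree. The computation actually gives $\widetilde{c}^\vee_{-k}$ and $c^\vee_{-k}$ for the two sums, so both equal $1$ at $k=0$. Dropping the $n=k$ term would not fix this, since it leaves $1-C_0$. The lemma therefore holds exactly for $k\ge 1$, which is the range used in the bosonic-sector computation of Appendix~\ref{App:Integral-formula}.
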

\begin{proof}
By \eqref{C-def} the coefficient of $\widetilde{c}_m^\vee [\partial/\partial p]$ in \eqref{C-sum1} is
$$
\sum_{n=0}^{m+k}  t^{-n} c_n[p] c^\vee_{k-n+m}[p].
$$
Similarly the coefficient of $c_m^\vee [p]$ in \eqref{C-sum2} is
$$
\sum_{n=0}^{m+k} \widetilde{c}_{m+k-n}[\partial/\partial p]q^n\widetilde{c}_n [\partial/\partial p].
%\widetilde{c}_{m+k-n}[\partial/\partial p]q^n\widetilde{c}_n [\partial/\partial p]
$$
Both of them  vanish due to \eqref{bi-linear}. 
\end{proof}
%%%%%%%%%%%%%%%%%%%%%%%%%%%%%%%%%%%%%%%%%%%%%%%%%%%%%%%%%%%%%%%%%%%%%%%%%%%%%%%%%%%%%%%%%%%%%%%%%%%%%%%%%%%%%%%%%

\section{Integral formula for $E_{2, 1}$ and $F_{1, 2}$}
\label{section;integral-formula}

Motivated by the integral formula of the Hamiltonians $H_{2, +1}$ and $H_{1,+1}$ presented in \cite{Galakhov:2025phf},
we conjecture the following integral representation of $E_{2,1}$ and $F_{1,2}$;
\begin{con}
\label{E21-F12-con}
\begin{align}
u^{-1} (q/t)^{\frac{1}{2}} E_{2,1} &= 
- \sum_{k=1}^\infty \oint \frac{dw}{w} w^{-2k} V_B^{(-)}(w) V_B^{(+)}(w)  
\langle \varnothing \vert \widetilde{V}_F^{(-)}(w) \widetilde{V}_F^{(+)}(w) \vert \varnothing\rangle_{F} q^{k} \frac{\partial}{\partial \pi_k},
\\
u^{-1} F_{1,2} &=
\sum_{k=1}^\infty\oint \frac{dw}{w}  w^{-2k+2} V_B^{(-)}(w) V_B^{(+)}(w) 
\langle \varnothing \vert \widetilde{V}_F^{(-)}(w) \widetilde{V}_F^{(+)}(w) \vert \varnothing\rangle_{F} q^{k-1} \frac{\partial}{\partial \pi_k}.
\end{align}
\end{con}
In the above integral representation of $E_{2,1}$ and $F_{1,2}$, the bosonic vertex operator
\begin{equation}
V_B^{(-)}(w) := \exp \left( \sum_{k=1}^\infty \frac{1-t^{-k}}{k} p_k w^{2k} \right), 
\quad 
V_B^{(+)}(w) := \exp \left( \sum_{k=1}^\infty  (q^k -1) \frac{\partial}{\partial p_k} w^{-2k} \right),
\end{equation}
is the same as what was employed in the free field realization of the Ruijsenaars-Macdonald Hamiltonian \cite{Awata:1994xd},\cite{Awata:1995zk}. 
The fermionic vertex operator $\widetilde{V}_F^{(\pm)}(w)$ is
defined by\footnote{We assume $\psi_m, \psi_m^\dagger$ and $\pi_k$ are anti-commuting and make the products \lq\lq normal ordered\rq\rq.}
\begin{align}
% V_F^{(-)}(w) &= \exp \left(
% (1-t^{-1}) \sum_{k=1}^\infty t^{1-k} w^{2k-1} \pi_k \cdot \sum_{m=0}^{k-1}t^m \psi_m \right),
%\\
% V_F^{(+)}(w)&= \exp \left(
% (q-1) \sum_{k=1}^\infty q^{k-1} w^{-2k+1}  \left(\sum_{m=0}^{k-1} q^{-m} \psi_m^\dagger \right) \frac{\partial}{\partial \pi_k} \right), 
%\\
 \widetilde{V}_F^{(-)}(w) &= \exp \left(
 (1-t^{-1}) \sum_{k=2}^\infty t^{2-k} w^{2k-1} \pi_k \cdot \sum_{m=0}^{k-2}t^m \psi_m \right),
\\
 \widetilde{V}_F^{(+)}(w)&= \exp \left(
 (q-1) \sum_{k=2}^\infty q^{k-2} w^{-2k+1} \left( \sum_{m=0}^{k-2} q^{-m} \psi_m^\dagger \right) \frac{\partial}{\partial \pi_k}  \right).
\end{align} 
The vacuum expectation values are computed with respect to the charged fermions $\psi_m, \psi_m^\dagger$ with
the anti-commutation relation $\{ \psi_m, \psi_n^\dagger \} = \delta_{mn}$.  
The fermion vacuum is defined by $\psi_m \vert \varnothing \rangle_F =0$ and we have
$\langle \varnothing \vert \psi_m \psi_n^\dagger \vert \varnothing \rangle_F = \delta_{mn}$.

\begin{rmk}
Conjecture \ref{E21-F12-con} tells that by the shift $k \to k+1$ of the powers of $q$ and $w$, 
$F_{1,2}$ agrees with $E_{2,+1}$.\footnote{We 
have seen a similar shift of indices in the relation of $H_{1, -1}$ and $H_{2,-1}$.}  
\end{rmk}

\begin{prp}
We have the anti-commutation relations
\begin{equation}
\left[ E_{2,1}, F_{1,0} \right]_{+} = \left[ F_{1,2}, E_{2,0} \right]_{+} = 0.
\end{equation}
\end{prp}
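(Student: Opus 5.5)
Both anti-commutators will be computed directly from the explicit formulas: $E_{2,0}=\sum_m c_m[p]\,\partial/\partial\pi_m$ from \eqref{E-rep}, $F_{1,0}=u^{-1}\sum_m c_{m-1}[p]\,\partial/\partial\pi_m$, and the integral representations of Conjecture \ref{E21-F12-con}.

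\textbf{Structure of the operators.} Write both $E_{2,1}$ and $F_{1,2}$ in the schematic form
\[
\sum_k \oint \frac{dw}{w}\, w^{a(k)}\, V_B^{(-)}(w)V_B^{(+)}(w)\, \mathcal{G}(w)\, \frac{\partial}{\partial \pi_k},
\]
where $\mathcal{G}(w)$ is the fermionic vacuum expectation value. After contracting the $\psi,\psi^\dagger$, $\mathcal{G}(w)$ is a sum of normal ordered products of equal numbers of $\pi_j$ and $\partial/\partial\pi_{j'}$ with $j,j'\ge 2$. Every term in $E_{2,1}$ and $F_{1,2}$ therefore carries one net fermionic derivative. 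The operators $E_{2,0}$ and $F_{1,0}$ contain only $\partial/\partial\pi_m$ multiplied by functions of $p$.

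Consequently, the anti-commutator splits into two pieces.
\begin{itemize}
\item[(i)] A purely fermionic piece, in which the $\partial/\partial\pi_m$ of $E_{2,0}$ (or $F_{1,0}$) hits a $\pi_j$ in $\mathcal{G}$. Together with the signs from moving the odd operators past each other, this produces a term proportional to the commutator coefficient.
\item[(ii)] A bosonic piece, coming from the commutator of $c_m[p]$ with $V_B^{(+)}(w)$.
\end{itemize}
Products of only derivatives $\partial/\partial\pi$ anticommute, so the part with no contraction cancels automatically in the anti-commutator.

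\textbf{Bosonic commutator.} The key identity is obtained from the generating function \eqref{c_k-def}:
\[
V_B^{(+)}(w)\, \sum_m c_m[p] z^m = \frac{(1-z/w^2)(1-qtz/w^2)}{(1-qz/w^2)(1-tz/w^2)}\Big(\sum_m c_m z^m\Big) V_B^{(+)}(w).
\]
Reorganising $V_B^{(-)}V_B^{(+)}c_m$ into $c\,V_B^{(-)}V_B^{(+)}$ plus remainder terms then yields expressions involving $t^{-n}c_n$ against $c^\vee$, and $q^n\widetilde{c}_n$ against $\widetilde c^\vee$.

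I will therefore expand the $w$-integral. Taking the residue at $w^0$ turns $V_B^{(-)}V_B^{(+)}$ into the operators $C_\ell[p,\partial/\partial p]$ of \eqref{C-def}. For instance, $u^{-1}F_{1,2}$ contains
\[
\sum_k C_{k-1} \cdot \Big(\text{fermionic VEV part}\Big)\, q^{k-1}\,\frac{\partial}{\partial\pi_k},
\]
which is consistent with the level-$\frac52$ expansion \eqref{F12-level5/2}. Anti-commuting with $E_{2,0}$ and collecting the coefficient of a fixed fermionic monomial then gives sums of the form $\sum_n t^{-n}c_n C_{k-n}$ or $\sum_n C_{n-k}q^n\widetilde c_n$. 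These vanish by Lemma \ref{C-lemma}.

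\textbf{Fermionic terms.} The contractions in $\mathcal{G}$ weight $\pi_j\,\partial/\partial\pi_{j'}$ by $t^{2-j}\sum_m t^m$ and $q^{j'-2}\sum_m q^{-m}$. These geometric sums are exactly what is needed to shift the index in $c_{m}$ versus $c_{m-1}$. In other words, the fermionic piece (i) should rebuild the $t^{-n}$ (for $E_{2,0}$) or $q^n$ weights, so that (i) and (ii) combine into the combinations of Lemma \ref{C-lemma}. The case $[E_{2,1},F_{1,0}]_+$ is parallel: by the Remark, $F_{1,2}$ and $E_{2,1}$ differ by the shift $k\to k+1$, and $F_{1,0}$ differs from $E_{2,0}$ by the same shift $c_m\to c_{m-1}$. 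So the second identity follows from the first by reindexing, up to the overall factor $-u(t/q)^{1/2}u^{-1}$.

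\textbf{Main obstacle.} The main difficulty is the bookkeeping in step (i): verifying that the contraction of $\partial/\partial\pi_m$ with the exponentiated fermion bilinears, including terms of higher fermionic order, reproduces precisely the $t^{-n}$ and $q^n$ weights. First I would check that the terms cubic in fermions cancel, matching the mechanism noted after \eqref{E20-F12-com}. I would then argue by induction on fermion number, using the fact that $\widetilde V_F^{(\pm)}$ are exponentials, so each contraction reproduces the same vertex operator times a linear factor.
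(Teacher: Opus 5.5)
\textbf{There is a genuine gap.} Your route, a direct evaluation of both anticommutators, differs from the paper's. Its decisive step is asserted rather than derived, and the mechanism you name for it, Lemma \ref{C-lemma}, only works at the lowest fermionic order.

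Your split into a contraction piece (i) and a bosonic piece (ii) is correct. Write $\mathcal{G}(w):=\langle \varnothing \vert \widetilde{V}_F^{(-)}(w)\widetilde{V}_F^{(+)}(w)\vert\varnothing\rangle_F$. Since $[\partial/\partial\pi_n,\mathcal{G}(w)]$ is $\mathcal{G}(w)$ times a linear combination of $\partial/\partial\pi_j$, the two pieces combine, to all orders, into
\[
\left[u^{-1}F_{1,2},E_{2,0}\right]_{+}=(q-1)(1-t)\sum_{a<b}\gamma_{ab}\sum_{r\geq 0}t^{-r}\oint\frac{dw}{w}\,w^{2(r+1-a-b)}\,c_r[p]\,V_B^{(-)}(w)V_B^{(+)}(w)\,\mathcal{G}(w)\,\frac{\partial}{\partial\pi_a}\frac{\partial}{\partial\pi_b},
\qquad
\gamma_{ab}=\sum_{s=a}^{b-1}t^{s}q^{a+b-2-s}.
\]
For $\mathcal{G}=1$ the $r$-sum is $\sum_r t^{-r}c_rC_{a+b-1-r}$, which vanishes by Lemma \ref{C-lemma}. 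In general it does not:
\begin{itemize}
\item Expand $\mathcal{G}(w)=\sum_d \mathcal{G}_d w^{2d}$. The term $\mathcal{G}_d$ produces $\sum_r t^{-r}c_rC_{k-r}$ with $k=a+b-1-d$.
\item $\mathcal{G}$ contains $\pi_j\frac{\partial}{\partial\pi_{j'}}$ with $d=j-j'$ arbitrarily large, so $k$ can be $\leq 0$.
\item For $k\leq0$ one has $\sum_r t^{-r}c_rC_{k-r}=\widetilde{c}^\vee_{-k}\neq 0$.
\end{itemize}
Equivalently, $\sum_r t^{-r}c_r[p]w^{2r}=V_B^{(-)}(w)^{-1}$, so each term is $\oint\frac{dw}{w}w^{2(1-a-b)}V_B^{(+)}(w)\mathcal{G}(w)$. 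This is nonzero termwise because $\mathcal{G}$ has positive powers of $w$. So the coefficient of a fixed fermionic monomial is not a vanishing sum of the type in Lemma \ref{C-lemma}.

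Here is a concrete instance. The monomial $\pi_5\frac{\partial}{\partial\pi_1}\frac{\partial}{\partial\pi_2}\frac{\partial}{\partial\pi_3}$ arises in two ways. One is $(a,b)=(1,2)$ with the term $g_{53}\pi_5\frac{\partial}{\partial\pi_3}$ of $\mathcal{G}$. The other is $(a,b)=(1,3)$ with $g_{52}\pi_5\frac{\partial}{\partial\pi_2}$. Here $g_{jj'}=(1-t^{-1})(q-1)t^{2-j}q^{j'-2}\sum_{m=0}^{\min(j,j')-2}(t/q)^m$. Both contributions carry the bosonic factor $\widetilde{c}^\vee_0=1$. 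They cancel only because $\gamma_{12}g_{53}=\gamma_{13}g_{52}$ and the two orderings of the derivatives have opposite signs.

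In general you need an antisymmetrization (Wick determinant) identity between the explicit $\frac{\partial}{\partial\pi_a}\frac{\partial}{\partial\pi_b}$ and the derivatives inside $\mathcal{G}$. That identity uses the specific contraction coefficients $g_{jj'}$. Neither Lemma \ref{C-lemma} nor an induction on fermion number supplies it.

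Your reduction of $[E_{2,1},F_{1,0}]_+$ to the first identity does hold. The combined coefficient there is $q$ times the one above with $r\to r+1$, so the two anticommutators are proportional. But this does not follow from the Remark, which only concerns powers of $q$ and $w$. Index shifts of exactly this kind are what make $[H_{1,+1},E_{2,0}]\neq 0$ in Lemma \ref{lemma-EF}, so it has to be checked.

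The paper avoids this computation entirely. Lemma \ref{lemma-EF} gives $E_{2,1}\propto[H_{1,+1},E_{2,0}]$ and $F_{1,2}\propto[H_{2,+1},F_{1,1}]$. The super Jacobi identity then reduces both anticommutators to two inputs:
\begin{itemize}
\item $[E_{2,0},F_{1,0}]_+=[E_{2,0},F_{1,1}]_+=0$. This is immediate, since all three operators are $\frac{\partial}{\partial\pi_k}$'s with mutually commuting coefficients in $p$.
\item $[H_{1,+1},F_{1,0}]=[H_{2,+1},E_{2,0}]=0$, which is Proposition \ref{Prop:commutativity+}.
\end{itemize}
Your brute-force computation amounts to reproving part of Proposition \ref{Prop:commutativity+}. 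To keep your route you must supply the higher-order cancellation described above; otherwise use the Jacobi argument.
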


We can confirm the anti-commutation relations by noting the following relations;
\begin{lem}\label{lemma-EF}
\begin{align}
\label{com-forE21}
\left[ H_{1,+1}, E_{2,0} \right] &= (q^{\frac{1}{2}} - q^{-\frac{1}{2}})(t^{\frac{1}{2}} - t^{-\frac{1}{2}}) E_{2,1}, \\
\label{com-forF12}
\left[ H_{2,+1}, F_{1,1} \right] &= (q^{\frac{1}{2}} - q^{-\frac{1}{2}})(t^{\frac{1}{2}} - t^{-\frac{1}{2}}) F_{1,2},
\end{align}
where $H_{1,+1}$ and $H_{2,+1}$ are the Hamiltonians obtained in \cite{Galakhov:2025phf} (see Appendix \ref{App:Integral-formula}).
\end{lem}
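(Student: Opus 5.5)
Both identities are direct computations: substitute the explicit operator realizations and compare coefficients. For \eqref{com-forE21} I would take $E_{2,0}=\sum_k c_k[p]\,\partial/\partial\pi_k$ from \eqref{E-rep} and $H_{1,+1}$ in the integral form of \cite{Galakhov:2025phf} recalled in Appendix \ref{App:Integral-formula}. Under the dictionary \eqref{R-dictionary+}, this form is $u^{-1}H_{1,+1}=\widehat{\mathcal H}^{\overline{\mathsf t},+}$, written as
\[
\oint\frac{dw}{w}\,V_B^{(-)}(w)V_B^{(+)}(w)\,\langle\varnothing|\widetilde V_F^{(-)}(w)\widetilde V_F^{(+)}(w)|\varnothing\rangle_F,
\]
up to the constant normalization fixed by the eigenvalues in Proposition \ref{eigenvalues}.

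The commutator $[H_{1,+1},c_k[p]\,\partial/\partial\pi_k]$ splits into two parts.
\begin{itemize}
\item \textbf{Bosonic part.} This is the commutator of $V_B^{(+)}(w)$ with $c_k[p]$. Using the generating functions \eqref{c_k-def} and \eqref{tc-check-gen}, it equals a scalar factor times $V_B^{(+)}$, exactly as in the computation of $\widetilde C_{k,\ell}$ in Section \ref{section:Hamiltonians}. Explicitly, $V_B^{(+)}(w)\,c(z)=\frac{(1-tz/w^2)(1-qz/w^2)}{(1-z/w^2)(1-qtz/w^2)}\,c(z)\,V_B^{(+)}(w)$, where $c(z)=\sum_k c_k z^k$.
\item \textbf{Fermionic part.} This is the anticommutator of $\partial/\partial\pi_k$ with the $\pi_\ell$ in $\widetilde V_F^{(-)}(w)$. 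It removes one $\pi_k$ and leaves an insertion of $(1-t^{-1})t^{2-k}w^{2k-1}\sum_m t^m\psi_m$ inside the vacuum expectation value.
\end{itemize}

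The main step is to show that the sum of the two contributions reorganizes into
\[
-\sum_k \oint\frac{dw}{w}\,w^{-2k}(\cdots)\,q^k\,\partial/\partial\pi_k ,
\]
multiplied by $(q^{1/2}-q^{-1/2})(t^{1/2}-t^{-1/2})u(t/q)^{1/2}$, which is the form of $E_{2,1}$ in Conjecture \ref{E21-F12-con}. Concretely, I would extract the coefficient of each $\partial/\partial\pi_k$, expand the bosonic factor as a power series in $z/w^2$, and resum over $k$. The geometric sums $\sum_m t^m\psi_m$ and $\sum_m q^{-m}\psi^\dagger_m$, after contraction, produce factors of the form $[[n]]_{(q,t^{-1})}$, matching the rational bosonic factor. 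At this point the bilinear identities \eqref{bi-linear} and Lemma \ref{C-lemma} are used to cancel the spurious terms, namely those in which $c_k$ multiplies $c^\vee$ and $\widetilde c$ multiplies $\widetilde c^\vee$ with total index fixed. I expect this bookkeeping to be the obstacle: one must show that the terms with $n\ge 1$ from the expansion of the bosonic rational factor telescope against the fermionic contractions, leaving only the shifted integrand with $w^{-2k}$.

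For \eqref{com-forF12} I would argue in the same way. Take $F_{1,1}=\partial/\partial\pi_1$ from \eqref{F-rep}, and $H_{2,+1}$ in integral form, which differs from $H_{1,+1}$ by the index shift $k\to k+1$ of the fermionic vertex operators, as in the Remark. The commutator with $\partial/\partial\pi_1$ hits only the $\pi_1$ term of the untilded fermionic vertex. This reduces the computation to the $E_{2,1}$ case via the shift $k\to k+1$ noted in the Remark, giving $F_{1,2}$ directly.

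Given the Lemma, the Proposition then follows. Apply the relations of Section \ref{section:shifted-algebra}: $[E_{2,1},F_{1,0}]_+$ is proportional to $[[H_{1,1},E_{2,0}],F_{1,0}]_+$. Expanding this with the super-Jacobi identity, it vanishes because $[E_{2,0},F_{1,0}]_+=0$ (both are linear in $\partial/\partial\pi$) and because $[H_{1,1},F_{1,0}]=0$, which is shown in Appendix \ref{App:Integral-formula}.
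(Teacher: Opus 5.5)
Your split of $[u^{-1}H_{1,+1},E_{2,0}]$ follows the paper's proof: a bosonic part from commuting $V_B^{(+)}(w)$ past $c_n[p]$, plus a fermionic part from contracting $\partial/\partial\pi_n$ into $\widetilde{V}_F^{(-)}(w)$. But the step you defer as ``bookkeeping'' is the whole proof, and you set it up with the wrong kernel.

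\textbf{The kernel.} With $x=z/w^2$ one has $\sum_{k\ge1}(q^k-1)(1-t^k)x^k/k=\log\frac{(1-x)(1-qtx)}{(1-qx)(1-tx)}$. Hence $V_B^{(+)}(w)\,c(z)=\frac{(1-x)(1-qtx)}{(1-qx)(1-tx)}\,c(z)\,V_B^{(+)}(w)$, which is the reciprocal of your factor. This gives $[V_B^{(+)}(w),c_n[p]]=(q-1)(1-t)\sum_{r=0}^{n-1}[[n-r]]_{(q,t)}c_r[p]\,w^{-2(n-r)}V_B^{(+)}(w)$, as in \eqref{c_n-commutation}. The fermionic contractions must match these $(q,t)$-numbers, not $(q,t^{-1})$-numbers; with your kernel the two contributions would not combine.

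\textbf{The missing idea: what survives.} Write $[[n-r]]_{(q,t)}=\sum_{k=r+1}^{n}q^{n-k}t^{k-r-1}$ and rename $r\to m$. The bosonic part is then $(q-1)(1-t)\oint\frac{dw}{w}$ of the sum over $n\ge1$, $1\le k\le n$, $0\le m\le k-1$ of $q^{n-k}t^{k-m-1}w^{-2(n-m)}c_m[p]\,V_B^{(-)}V_B^{(+)}\langle\widetilde{V}_F^{(-)}\widetilde{V}_F^{(+)}\rangle\,\partial/\partial\pi_n$ (cf.\ \eqref{bosonic-com}). The fermionic part is the same summand, but only over $2\le k\le n$, $m\ge k$. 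The reason is that in $\widetilde{V}_F^{(\pm)}$ the modes $\psi_j,\psi_j^\dagger$ accompany only $\pi_k$ and $\partial/\partial\pi_k$ with $k\ge j+2$, whereas in $V_F^{(\pm)}$ they accompany $k\ge j+1$ (cf.\ \eqref{change-sum-order}).

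Together the two parts give:
\begin{itemize}
\item the complete range $2\le k\le n$, $m\ge0$, which cancels exactly as in the proof of Proposition \ref{Prop:commutativity+}, by Lemma \ref{C-lemma} (equivalently $\sum_m t^{-m}c_m[p]\,w^{2m}V_B^{(-)}(w)=1$);
\item a single boundary term $k=1$, $m=0$, namely $(q-1)(1-t)\sum_n q^{n-1}\oint\frac{dw}{w}w^{-2n}V_B^{(-)}V_B^{(+)}\langle\widetilde{V}_F^{(-)}\widetilde{V}_F^{(+)}\rangle\,\partial/\partial\pi_n$.
\end{itemize}
The boundary term equals $(1-q^{-1})(t-1)u^{-1}(q/t)^{1/2}E_{2,1}$ with $E_{2,1}$ as in Conjecture \ref{E21-F12-con}, which is \eqref{com-forE21}. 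For the untilded $V_F$ the fermionic range starts at $k=1$ and everything cancels; that is Proposition \ref{Prop:commutativity+}. It is precisely the shifted range of $\widetilde{V}_F$ that leaves $E_{2,1}$. A generic ``telescoping'' claim does not determine this remainder.

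\textbf{The second identity.} For \eqref{com-forF12}, your first observation is correct: only the $\pi_1$-term of $V_F^{(-)}$ is hit. The next step, ``reducing to the $E_{2,1}$ case via the Remark'', does not work. The Remark compares the two conjectured formulas, so it presupposes the shape you are trying to derive. Moreover, the $E_{2,1}$ computation is driven by the bosonic commutator with $c_n[p]$, which is absent here because $\partial/\partial\pi_1$ commutes with $V_B^{(\pm)}$.

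What you need is the direct contraction:
\begin{itemize}
\item $\pi_1$ enters $V_F^{(-)}$ only through $(1-t^{-1})w\,\pi_1\psi_0$;
\item the resulting $\psi_0$ pairs only with the term $(q-1)\sum_k q^{k-1}w^{1-2k}\psi_0^\dagger\,\partial/\partial\pi_k$ of $V_F^{(+)}$;
\item the remaining modes $\psi_m,\psi_m^\dagger$ with $m\ge1$, relabelled $m\to m-1$, reproduce exactly $\widetilde{V}_F^{(\pm)}$; this is the content of \eqref{H_1andH_2}.
\end{itemize}
This yields $(1-t^{-1})(q-1)\sum_k q^{k-1}\oint\frac{dw}{w}w^{2-2k}V_B^{(-)}V_B^{(+)}\langle\widetilde{V}_F^{(-)}\widetilde{V}_F^{(+)}\rangle\,\partial/\partial\pi_k$. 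That is $(1-t^{-1})(q-1)\,u^{-1}F_{1,2}$ with $F_{1,2}$ as in Conjecture \ref{E21-F12-con}, and no cancellation is required.

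Your closing deduction of $[E_{2,1},F_{1,0}]_+=0$ from the Lemma is fine. The companion relation $[E_{2,0},F_{1,2}]_+=0$ follows in the same way from $[E_{2,0},F_{1,1}]_+=0$ and $[H_{2,+1},E_{2,0}]=0$.
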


In fact by \eqref{com-forE21} and \eqref{com-forF12} we have
\begin{align}
\left[ E_{2,1}, F_{1,0} \right] &\sim \Big[ \left[H_{1,+1}, E_{2,0} \right], F_{1,0} \Big] \CR
&= - \Big[ \left[E_{2,0}, F_{1,0} \right],  H_{1,+1}\Big] -\Big[ \left[ F_{1,0}, H_{1,+1}\right],  E_{2,0}\Big].
\end{align}
and 
\begin{align}
\left[ F_{1,2}, E_{2,0} \right] &\sim \Big[ \left[H_{2,+1}, F_{1,1} \right], E_{2,0} \Big] \CR
&= - \Big[ \left[ F_{1,1}, E_{2,0} \right], H_{2,+1},\Big] - \Big[ \left[E_{2,0}, H_{2,+1} \right], F_{1,1} \Big].
\end{align}
By looking at explicit formulas for $E_{2,0}, F_{1,0}$ and $F_{1,1}$ we see $\left[E_{2,0}, F_{1,0} \right] = \left[E_{2,0}, F_{1,1} \right] =0$.
In addition Prop \ref{Prop:commutativity+} tells $\left[ F_{1,0}, H_{1,+1}\right] = \left[E_{2,0}, H_{2,+1} \right] =0$. 

%%%%%%%%%%%%%%%%%%%%%%%%%%%%%%%%%%%%%%%%%%%%%%%%%%%%%%%%%%%%%%%%%%%%%%%%%%%%%%%%%%%%%%%%%%%%%%%%%%%%%%%%%%%%%%%%%%%%%%%%%%%%%%%

\begin{proof}

We prove Lemma \ref{lemma-EF}.

Since $F_{1,1} = \frac{\partial}{\partial \pi_1}$, the commutation relation \eqref{com-forF12} is easier to compute.
In fact from \eqref{H_2+-formula}, we have
\begin{align}
& u^{-1} (q-1)(1-t^{-1}) F_{1,2} \CR
&= \oint \frac{dw}{w} V_B^{(-)}(w) V_B^{(+)}(w)  
\langle \varnothing \vert (1-t^{-1}) w \psi_0 \widetilde{V}_F^{(-)}(w) V_F^{(+)}(w) \vert \varnothing \rangle_{F} \CR
&= (q-1)(1-t^{-1}) \sum_{k=1}^\infty \oint \frac{dw}{w} w^{-2k+2} V_B^{(-)}(w) V_B^{(+)}(w)  
\langle \varnothing \vert \widetilde{V}_F^{(-)}(w) \widetilde{V}_F^{(+)}(w) 
\vert \varnothing \rangle_{F} q^{k-1} \frac{\partial}{\partial \pi_k}.
\end{align}

On the other hand, the check of \eqref{com-forE21} is more involved.
The commutation relation \eqref{com-forE21} means
\begin{equation}
(1-q^{-1})(t-1) u^{-1} (q/t)^{\frac{1}{2}} E_{2,1} 
= \left[ u^{-1} H_{1,+1}, \sum_{n=1}^\infty c_n[p] \frac{\partial}{\partial \pi_n} \right],
\end{equation}
which can be computed in a similar way to the proof of Prop. \ref{Prop:commutativity+}.
Here $H_{2,+1}$ is simply replaced by $H_{1,+1}$. Hence, replacing $V_F^{(\pm)}$ in \eqref{1st-term}
by $\widetilde{V}_F^{(\pm)}$ we have
\begin{align}\label{bosonic-com}
& (q-1)(1-t) \oint \frac{dw}{w} \sum_{n=1}^\infty 
 \left( \sum_{m=0}^{n-1} t^{n-m-1} b_{n-m}(q,t) w^{-2(n-m)} c_{m} [p] \right) \CR
& \qquad  \times V_B^{(-)}(w) V_B^{(+)}(w) \langle \varnothing \vert
\widetilde{V}_F^{(-)}(w) \widetilde{V}_F^{(+)}(w) \vert \varnothing \rangle_{F}\frac{\partial}{\partial \pi_n} \CR
&=  (q-1)(1-t) \oint \frac{dw}{w} \sum_{n=1}^\infty \sum_{k=1}^n
\sum_{m=0}^{k-1} q^{n-k} t^{k-m-1} w^{-2(n-m)} c_{m} [p] \CR
& \qquad  \times V_B^{(-)}(w) V_B^{(+)}(w) \langle \varnothing \vert
\widetilde{V}_F^{(-)}(w) \widetilde{V}_F^{(+)}(w) \vert \varnothing \rangle_{F}\frac{\partial}{\partial \pi_n}.
\end{align}
The crucial difference from the case of Prop. \ref{Prop:commutativity+}
is the range of the summation in the anti-commutation relation of fermions,
which is 
\begin{align}\label{fermionic-com}
& (t^{-1}-1) \oint \frac{dw}{w} V_B^{(-)}(w) V_B^{(+)}(w) \sum_{n=2}^\infty c_n[p]  t^{2-n} w^{2n-1} 
\langle \varnothing \vert\left( \sum_{m=0}^{n-2} t^m \psi_m \right)
\widetilde{V}_F^{(-)}(w) \widetilde{V}_F^{(+)}(w) \vert \varnothing \rangle_{F} \CR
&= (q-1)(1-t) \oint \frac{dw}{w} V_B^{(-)}(w) V_B^{(+)}(w) \sum_{n=2}^\infty c_{n} [p] t^{1-n} w^{2(n-k)} \sum_{m=0}^{n-2} (t/q)^m \CR
& \qquad \qquad \times \langle \varnothing \vert
\widetilde{V}_F^{(-)}(w) \widetilde{V}_F^{(+)}(w) \vert \varnothing \rangle_{F} 
\sum_{k=m+2}^\infty q^{k-2}\frac{\partial}{\partial \pi_k}.
\end{align}
Now the summation in the integrand is recasted as follows;
\begin{align}\label{change-sum-order}
& \sum_{m=2}^\infty c_m[p] t^{1-m} w^{2(m-n)} \sum_{k=2}^m (t/q)^{k-2} \sum_{n=k}^\infty 
q^{n-2}\frac{\partial}{\partial \pi_n} \CR
&= \sum_{m=2}^\infty \sum_{k=2}^m \sum_{n=k}^\infty c_m[p] t^{k-m-1} q^{n-k} w^{2(m-n)}\frac{\partial}{\partial \pi_n} \CR 
&= \sum_{n=2}^\infty \sum_{k=2}^n \sum_{m=k}^\infty c_m[p] t^{k-m-1} q^{n-k} w^{2(m-n)}\frac{\partial}{\partial \pi_n}.
\end{align}
The region of the triple summation is $\{ (n,m,k) \in \mathbb{N} \mid 2 \leq n,m, 1\leq k \leq n, 1\leq k \leq m \}$,
which is symmetric in $n$ and $m$. In the last equality the summations over $n$ and $m$ are exchanged. 
Note that the range of $(n,m)$ is $1\leq n,m$ in the case of Prop.\ref{Prop:commutativity+}, but here it is $2 \leq n,m$. 
Consequently the cancellation that takes place in the proof of Prop.\ref{Prop:commutativity+} becomes incomplete and 
the term corresponds to  $k=1$ and $m=0$ in the summation of \eqref{bosonic-com} survives. 
Hence, we have
\begin{align}
\Big[ u^{-1} H_{1,+1}, \sum_{n=1}^\infty c_n[p] \frac{\partial}{\partial \pi_n} \Big] 
&=  (q-1)(1-t) \oint \frac{dw}{w} \sum_{n=1}^\infty q^{n-1} w^{-2n} \CR
& \qquad \times V_B^{(-)}(w) V_B^{(+)}(w) \langle \varnothing \vert
\widetilde{V}_F^{(-)}(w) \widetilde{V}_F^{(+)}(w) \vert \varnothing \rangle_{F}\frac{\partial}{\partial \pi_n}.
\end{align}
By equating the right hand side with $(1-q^{-1})(t-1) u^{-1} (q/t)^{\frac{1}{2}} E_{2,1}$ we obtain  \eqref{com-forE21}.
\end{proof}

From Conjecture \ref{E21-F12-con} we can compute the fermion linear terms of $E_{2,1}$ and $F_{1,2}$
by setting $\langle \varnothing \vert \widetilde{V}_F^{(-)}(w) \widetilde{V}_F^{(+)}(w) \vert \varnothing\rangle_{F} =1$;
\begin{prp}
The leading terms of $E_{2,1}$ and $F_{1,2}$ are linear in the fermionic derivatives and given by 
\begin{align}
\label{E21-lin}
u^{-1} (q/t)^{\frac{1}{2}} E_{2,1}^{\mathrm{(lin)}}
&= - \sum_{n=1}^\infty C_\ell [p, \partial/\partial p]  q^n \frac{\partial}{\partial \pi_n}, \\
\label{F12-lin}
u^{-1} F_{1,2}^{\mathrm{(lin)}} &= \sum_{n=1}^\infty  C_{n-1} [p, \partial/\partial p] q^{n-1}\frac{\partial}{\partial \pi_n},
\end{align} 
where
\begin{equation}
C_\ell [p, \partial/\partial p] :=\sum_{n=1}^\infty c_n^\vee[p] \left( \sum_{\ell=1}^{n}
\widetilde{c}_{n-\ell}^\vee \left[ \partial/ \partial p \right] \right)
= \sum_{n=0}^\infty c_{n +\ell}^\vee[p]~\widetilde{c}_{n}^\vee \left[ \partial/ \partial p \right],
\end{equation}
and the generating functions of $c_{n}^\vee[p]$ and $\widetilde{c}_{n}^\vee \left[ \partial/ \partial p \right]$ are 
given by \eqref{c-gen} and \eqref{tc-gen} with the involution $(q,t) \to (q^{-1}, t^{-1})$. 
\end{prp}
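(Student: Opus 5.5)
The approach is to start from Conjecture \ref{E21-F12-con} and discard every term of the fermionic vacuum expectation value beyond the identity. The operators $\widetilde{V}_F^{(-)}(w)$ and $\widetilde{V}_F^{(+)}(w)$ are exponentials of terms each containing one $\pi_k$ or one $\partial/\partial\pi_k$. Expanding $\langle \varnothing \vert \widetilde{V}_F^{(-)}(w) \widetilde{V}_F^{(+)}(w) \vert \varnothing\rangle_{F}$ therefore gives $1$ plus terms containing at least one bilinear $\pi_\ell \frac{\partial}{\partial \pi_m}$; these come from the contractions $\langle\psi_m\psi_m^\dagger\rangle=1$. Multiplied by the explicit $\frac{\partial}{\partial\pi_k}$, such terms are at least cubic in fermions. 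So the part linear in fermionic derivatives is obtained by replacing the VEV by $1$, which gives
\begin{equation*}
u^{-1}(q/t)^{\frac12}E_{2,1}^{(\mathrm{lin})}=-\sum_{k\ge1}q^k\oint\frac{dw}{w}\,w^{-2k}V_B^{(-)}(w)V_B^{(+)}(w)\,\frac{\partial}{\partial\pi_k},
\end{equation*}
and similarly for $F_{1,2}$, with $w^{-2k+2}$ and $q^{k-1}$ in place of $w^{-2k}$ and $q^{k}$.

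Next I evaluate the bosonic contour integral. Comparing with \eqref{c-check-gen} and \eqref{tc-check-gen} at $z=w^2$, we have $V_B^{(-)}(w)=\sum_{a\ge0}c_a^\vee[p]\,w^{2a}$ and $V_B^{(+)}(w)=\sum_{b\ge0}\widetilde{c}_b^\vee[\partial/\partial p]\,w^{-2b}$. This product is already normal ordered, with multiplication operators on the left, so no commutator factor appears. Taking the residue of $w^{-2k}$ times this product selects $a-b=k$. Hence
\begin{equation*}
\oint\frac{dw}{w}\,w^{-2k}V_B^{(-)}V_B^{(+)}=\sum_{b\ge0}c^\vee_{b+k}[p]\,\widetilde{c}^\vee_b[\partial/\partial p]=C_k[p,\partial/\partial p],
\end{equation*}
which matches the definition \eqref{C-def}. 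For $F_{1,2}$ the same computation with $w^{-2k+2}$ gives $C_{k-1}$. This yields \eqref{E21-lin}, where the index $\ell$ in $C_\ell$ should read $n$, and \eqref{F12-lin}. For $n=1$ in \eqref{F12-lin} the coefficient is $C_0$, which includes the constant term $c_0^\vee\widetilde{c}_0^\vee=1$. This is consistent with the lower-level check \eqref{F12-level5/2}. I also plan to verify the equality of the two expressions given for $C_\ell$ by reindexing the sum.

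The only step needing real care is the claim that the VEV contributes nothing linear. I must confirm that the fermionic normal ordering and the vacuum rules admit no term with a lone $\psi_m$ or $\psi_m^\dagger$. A charged fermion pairing with $\{\psi_m,\psi_n^\dagger\}=\delta_{mn}$ requires equal numbers of $\psi$ and $\psi^\dagger$. Each such pair carries one $\pi_\ell$ and one $\frac{\partial}{\partial\pi_{m}}$. So the surviving constant is $1$, and every other term raises the fermion degree by two. With that checked, the remainder is the routine residue bookkeeping above.
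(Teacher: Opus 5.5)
Your argument is correct and is the paper's own: the paper obtains the linear terms simply by setting $\langle \varnothing \vert \widetilde{V}_F^{(-)}(w) \widetilde{V}_F^{(+)}(w) \vert \varnothing\rangle_{F}=1$ in Conjecture~\ref{E21-F12-con}. Your charge-conservation argument, which shows that every nontrivial VEV term carries a normal-ordered bilinear $\pi_\ell\,\partial/\partial\pi_m$, and your residue computation giving $C_n$ (resp.\ $C_{n-1}$) spell out what the paper leaves implicit.

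One small correction: drop the plan to prove equality of the two displayed expressions for $C_\ell$. In the first one $\ell$ is a summation index, so it actually equals $\sum_{\ell\ge1}C_\ell$, and it misses the constant $c_0^\vee\widetilde{c}_0^\vee=1$ that your $C_0$ correctly contains. Only the second form, i.e.\ \eqref{C-def}, is the right definition, and that is exactly what your residue computation produces.
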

In the last section by examining the Pieri rule of higher modes 
we worked out the first two fermion linear terms of $E_{2,1}$ and $F_{2,1}$.
\eqref{E21-lin} and \eqref{F12-lin} are consistent with the explicit computations there. 
We have explicitly checked that the leading terms of $E_{2,1}$ and $F_{1,2}$ are consistent with the Pieri rule 
of the super Macdonald polynomials of lower levels. 
Compared with $E_{2,0}$ and $F_{1,1}$ given by Conjecture \ref{Pieri-con}, $E_{2,1}$ and $F_{1,2}$ are
qualitatively different in the sense that the coefficients $C_\ell [p, \partial/\partial p]$ involve both $p$ and $\partial/\partial p$,
or both the creation and the annihilation operators. Moreover we find that if we expand $E_{2,1}$ and $F_{1,2}$ in fermionic variables
$\pi_k$, the expansion has any higher order terms. Consequently the Hamiltonians $H_{1,1}$ and $H_{2,1}$ have the same property. 
We would like to emphasize that this is qualitatively different from $H_{1,1}$ and $H_{2,1}$ whose fermionic parts are bi-linear 
in fermionic variables (see \eqref{H2-bilinear} and \eqref{H1-bilinear}).

\begin{rmk}
\begin{enumerate}
\item
By the anti-commutation relation 
$\left[ E_{1,0}, F_{1,2} \right]_{+} = \left[ \pi_1, F_{1,2} \right]_{+}= H_{1,+1}$,
the $n=1$ term of $u^{-1} F_{1,2}^{\mathrm{(lin)}}$ should give 
the bosonic part of $(q,t)$ inverted Hamiltonian $H_{1,1}$, which is in fact $C_0 [p, \partial/\partial p]$. 
\item
\eqref{F12-lin} agrees with the computation from $\mathcal{Q}_3$ in Appendix \ref{App:Tq}. 
\end{enumerate}
\end{rmk}
%

%%%%%%%%%%%%%%%%%%%%%%%%%%%%%%%%%%%%%%%%%%%%%%%%%%%%%%%%%%%%%%%%%%%%%%%%%%%%%%%%

The generating function of the coefficients $C_\ell [p, \partial/\partial p]$ is
\begin{equation}
\sum_{\ell=1}^\infty C_\ell [p, \partial/\partial p] z^\ell
=  \left[ \exp \left(\sum_{r=1}^\infty \frac{1-t^{-r}}{r} p_r z^r \right),  \exp \left( \sum_{n=1}^\infty (q^{n}-1) \frac{\partial}{\partial p_n} z^{-n} \right) \right]_{+},
\end{equation}
where the subscript $[\bullet]_{+}$ means taking the positive power part of $\bullet$. 
In other words, we have
\begin{equation}\label{bosonic-part}
C_\ell [p, \partial/\partial p] = \oint \frac{dz}{z} z^{-\ell}
\exp \left(\sum_{r=1}^\infty \frac{1-t^{-r}}{r} p_r z^r \right) \exp \left( \sum_{n=1}^\infty (q^{n}-1) \frac{\partial}{\partial p_n} z^{-n} \right).
\end{equation}
%Note that $1+ C_0 [p, \partial/\partial p]$ is nothing but the bosonic part of the Hamiltonian. 
In the following we extrapolate $C_\ell [p, \partial/\partial p]$ for $\ell \leq 0$ by \eqref{bosonic-part}. 

\begin{prp}
Up to fermion quartic terms we have
\begin{align}
\left[ E_{2,+1}, F_{2,-1} \right] &=
 \sum_{n=1}^\infty c_n^\vee[p] \widetilde{c}_n^\vee \left[ \partial/ \partial p \right] \CR
& \qquad + (1-t^{-1})(1-q^{-1}) 
\sum_{k=1}^\infty \sum_{n=1}^\infty  \sum_{r=1}^{k} [[r]]_{(q^{-1},t^{-1})} 
 C_{n-r} [p, \partial/\partial p] \cdot \widetilde{c}_{k-r} \left[ \partial/ \partial p \right] 
q^{n} \pi_k \frac{\partial}{\partial \pi_n}. 
\end{align}
\end{prp}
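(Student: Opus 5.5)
We compute the anti-commutator directly, using the integral formula of Conjecture \ref{E21-F12-con} for $E_{2,1}$ and the expression \eqref{F-rep} $F_{2,-1} = -\sum_{k\ge1}\pi_k\,\widetilde{c}_k[\partial/\partial p]$. Since we only want the result up to fermion quartic terms, we expand $E_{2,1}$ only to cubic order in fermions. That is, in $\langle \varnothing \vert \widetilde{V}_F^{(-)}(w)\widetilde{V}_F^{(+)}(w)\vert\varnothing\rangle_F$ we keep the constant $1$ and the single contraction of $\psi_m$ with $\psi_m^\dagger$. This gives
\begin{equation*}
1 + (1-t^{-1})(q-1)\sum_{a,b\ge2}\ \sum_{m=0}^{\min(a,b)-2} t^{2-a+m} q^{b-2-m}\, w^{2(a-b)}\,\pi_a \frac{\partial}{\partial \pi_b},
\end{equation*}
up to an ordering sign that we fix by the normal-ordering convention. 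After summing over $m$ this collapses to a $[[\,\cdot\,]]$-type coefficient.

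The anti-commutator then splits into two pieces. The first comes from $\left[\partial/\partial\pi_n , \pi_k\right]_+=\delta_{nk}$, i.e.\ from contracting the explicit $\partial/\partial\pi_n$ of $E_{2,1}$ with the $\pi_k$ of $F_{2,-1}$. The second comes from the commutators $\left[V_B^{(-)}(w)V_B^{(+)}(w),\,\widetilde{c}_k[\partial/\partial p]\right]$, which multiply by the standard exchange factor of the form $\prod(1-\cdot\, w^2/z)$, exactly as in the computation of $\widetilde{C}_{k,\ell}$ in the proof for $H_{2,-1}$.

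\emph{Bosonic part.} Here only the term with the fermionic vacuum expectation value equal to $1$ and the contraction $\delta_{nk}$ survive. The prefactor $-u(t/q)^{1/2}$ together with the minus signs then gives
\begin{equation*}
\sum_n \oint \frac{dw}{w}\, w^{-2n} V_B^{(-)}V_B^{(+)} q^n\, \widetilde{c}_n[\partial/\partial p].
\end{equation*}
Rewriting $\oint w^{-2n}V_B^{(-)}V_B^{(+)} = C_{-n}$ via \eqref{bosonic-part}, this bosonic part becomes $\sum_n C_{-n}\,q^n\widetilde{c}_n$. Lemma \ref{C-lemma}, \eqref{C-sum2} with $k=0$, states that $\sum_{n\ge0}C_{n}q^n\widetilde{c}_n=0$ after reindexing. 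We use it (and its variant with negative index, obtained the same way from \eqref{bi-linear}) to trade this expression for $\sum_{n\ge1}c_n^\vee\widetilde{c}_n^\vee$ plus the constant needed to match $(t/q)^{1/2}u(1+u^{-1}(q/t)^{1/2}H_{2,+1})$. Concretely, we reorder $V_B^{(-)}V_B^{(+)}\widetilde{c}$ using the second relation in \eqref{bi-linear}, which lets the product $\widetilde{c}^\vee\cdot q^n\widetilde{c}$ telescope.

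\emph{Fermion bilinear part.} This part collects two contributions. The first is (i) the cubic part of $E_{2,1}$ contracted with $\pi_k$ via $\delta$. The second is (ii) the linear part of $E_{2,1}$ with $\pi_k$ moved past $\partial/\partial\pi_n$ (no contraction), where the boson vertex operator must still be commuted through $\widetilde{c}_k$. For (ii), the commutator $\left[V_B^{(-)}(w),\widetilde{c}_k\right]$ gives, as in \eqref{Fbi-linear}, a factor
\begin{equation*}
\frac{(1-t^{-1})(1-q^{-1})\,x}{(1-q^{-1}x)(1-t^{-1}x)},
\end{equation*}
which produces $\sum_r [[r]]_{(q^{-1},t^{-1})}$. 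This sets the pattern $C_{n-r}\,\widetilde{c}_{k-r}$: each power of $w^2$ pulled from $\widetilde{c}_k$ shifts the index of $C$.

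The main obstacle is that (i) and (ii) must combine into the single stated double sum with $[[r]]_{(q^{-1},t^{-1})}$, while the terms with uncontracted $\partial/\partial\pi_1$ and the boundary terms $k=1$ or $n=1$ must cancel. I expect this cancellation to need the summation-reordering trick used in \eqref{change-sum-order}, i.e.\ exchanging the roles of the indices over the region symmetric in $n,m$. It should also need Lemma \ref{C-lemma} \eqref{C-sum1} to absorb the $c_m[p]$ factors that arise from the $t$-weights of the fermion contraction. I would first verify the combination at low levels against the level-$\frac32$ expression for $\overline{\mathcal{D}}_2$ computed earlier, and only then carry out the general index bookkeeping.
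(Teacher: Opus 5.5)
Your term (ii) is by itself exactly the stated formula, and it is the paper's entire argument. The paper works explicitly with the fermion-linear truncation \eqref{E21-lin}. Write $\widetilde{E}_{2,1}=(q/t)^{\frac12}u^{-1}E_{2,1}$ and $\widetilde{E}^{(\mathrm{lin})}_{2,1}=-\sum_{n}C_n q^n\frac{\partial}{\partial\pi_n}$. Then
\begin{equation*}
\Big[\widetilde{E}^{(\mathrm{lin})}_{2,1},F_{2,-1}\Big]_{+}=\sum_{n\ge1}C_nq^n\widetilde{c}_n\left[\partial/\partial p\right]+\sum_{k,n\ge1}\Big[\widetilde{c}_k\left[\partial/\partial p\right],C_n\Big]q^n\pi_k\frac{\partial}{\partial\pi_n}.
\end{equation*}
The first sum telescopes by \eqref{bi-linear2} to $-\sum_{m\ge1}c^\vee_m\widetilde{c}^\vee_m$. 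By \eqref{generatingC} the second becomes $(1-t^{-1})(q^{-1}-1)\sum_{k,n}\sum_{r=1}^{k}[[r]]_{(q^{-1},t^{-1})}C_{n-r}\widetilde{c}_{k-r}q^n\pi_k\frac{\partial}{\partial\pi_n}$. This is the statement up to the suppressed overall factor $-u(t/q)^{\frac12}$, and it has no quartic terms at all.

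So the step you flag as the main obstacle cannot be completed. For (i) and (ii) to combine into the stated sum, (i) would have to vanish, and it does not. Use $\sum_{j\ge0}C_{s+j}q^j\widetilde{c}_j=c^\vee_s$, which follows from \eqref{bi-linear}, with $c^\vee_s=0$ for $s<0$. Then the single contractions of the cubic part of $\widetilde{E}_{2,1}$ with $F_{2,-1}$ contribute $(1-t^{-1})(q-1)\,q\,C_1\widetilde{c}_1$ to the coefficient of $\pi_2\frac{\partial}{\partial\pi_2}$ (sign as in \eqref{H_1andH_2}). Its $p_1\frac{\partial}{\partial p_1}$ term is $-(1-t^{-1})^2(q-1)^2\neq0$.

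If you keep (i), as your reading of ``up to fermion quartic terms'' requires, you get something else. By Lemma \ref{Lemma6} with \eqref{H_1andH_2}, you land on the bilinear part of $1-\oint\frac{dw}{w}V_B^{(-)}V_B^{(+)}\langle\varnothing\vert V_F^{(-)}V_F^{(+)}\vert\varnothing\rangle_F$. That is $-(1-t^{-1})(q-1)\sum_{k,\ell}\kappa_{k,\ell}C_{\ell-k}\pi_k\frac{\partial}{\partial\pi_\ell}$, as in \eqref{H2+}, where $\kappa_{k,\ell}=q^{\ell-k}b_k(q,t)$ for $k\le\ell$ and $\kappa_{k,\ell}=t^{\ell-k}b_\ell(q,t)$ for $\ell<k$.

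This coincides with the stated sum only in the $\pi_1$ sector \eqref{pi1-term}, because $\widetilde{V}_F^{(\pm)}$ contains no $\pi_1$. It is this version, not the truncated one, whose trace on the four fermion-number-one states of level $\frac52$ matches the sum of the eigenvalues of Proposition \ref{eigenvalues}. Dropping (i) shifts that trace by $(1-t^{-1})^2(q-1)^2$.

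So the proposition has to be read as a statement about $E^{(\mathrm{lin})}_{2,1}$, and its proof is the short computation above. No reordering of sums is involved, and \eqref{C-sum1} plays no role, since no $c_m[p]$ occurs in $E_{2,1}$ or $F_{2,-1}$.

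Two smaller slips in your bosonic part:
\begin{itemize}
\item $\oint\frac{dw}{w}w^{-2n}V_B^{(-)}V_B^{(+)}=C_n$, not $C_{-n}$.
\item \eqref{C-sum2} fails at $k=0$, because the $c^\vee_0$ coefficient $\widetilde{c}^\vee_0\widetilde{c}_0=1$ survives. Hence $\sum_{n\ge0}C_nq^n\widetilde{c}_n=1$, and there is no extra constant to ``trade''.
\end{itemize}
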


\begin{rmk}
The bosonic term of the commutation relation with
$F_{2,-1} = - \displaystyle{\sum_{k=1}^\infty} \pi_k \cdot \widetilde{c}_k \left[ \partial/ \partial p \right]$
is
\begin{align}
\left[ E_{2,+1}, F_{2,-1} \right]_{\pi_k=0} &= - \sum_{n=1}^\infty c_n^\vee[p] \sum_{\substack{k+ \ell=n, \\  \ell \neq 0}} 
\widetilde{c}_k^\vee \left[ \partial/ \partial p \right] q^\ell \widetilde{c}_\ell \left[ \partial/ \partial p \right] \CR
&= \sum_{n=1}^\infty c_n^\vee[p] \widetilde{c}_n^\vee \left[ \partial/ \partial p \right],
\end{align}
where we have used \eqref{bi-linear2}.
This is exactly the $(q,t) \to (q^{-1}, t^{-1})$ form of the commutation relation 
\begin{equation}
\left[ E_{2,0}, F_{2,-1} \right] = \sum_{k=1}^\infty c_k [p] \widetilde{c}_k\left[ \partial/ \partial p \right].
\end{equation}
\end{rmk}
This is a consistency check of our conjecture. Note that in both cases we employ the common super charge $F_{2,-1}$. 
Assuming \eqref{E21-lin} we can obtain the fermion bi-linear terms of the anti-commutator;
\begin{equation}\label{H-bilinear}
\left[ E_{2,+1}, F_{2,-1} \right]_{\pi_k \neq 0} =
- \sum_{k=1}^\infty \sum_{n=1}^\infty  \Big[ \widetilde{c}_k \left[ \partial/ \partial p \right],
C_n \left[ p , \partial/\partial p \right] \Big]  q^n \pi_k \frac{\partial}{\partial \pi_n}. 
\end{equation}
The generating function of the commutator is
\begin{align}\label{generatingC}
& \sum_{k, n=0}^\infty \Big[ \widetilde{c}_k \left[ \partial/ \partial p \right], C_n \left[ p , \partial/\partial p \right]  \Big]  z^{-k} w^n  \CR
&= \Big[ \exp \left( \sum_{\ell=1}^\infty (q^{-\ell}-1) \frac{\partial}{\partial p_\ell} z^{-\ell} \right),~ 
\exp \left( \sum_{m=1}^\infty \frac{1-t^{-m}}{m} p_m w^m \right) \exp \left( \sum_{n=1}^\infty (q^{n}-1) \frac{\partial}{\partial p_n} w^{-n} \right) \Big] \CR
&= \frac{(1-t^{-1})(q^{-1}-1) (w/z)}{(1- t^{-1} (w/z))(1-q^{-1}(w/z))} 
\exp \left( \sum_{m=1}^\infty \frac{1-t^{-m}}{m} p_m w^m \right) \CR
& \qquad \times 
\exp \left( \sum_{\ell=1}^\infty (q^{-\ell}-1) \frac{\partial}{\partial p_\ell} z^{-\ell} \right)
\exp \left( \sum_{n=1}^\infty (q^{n}-1) \frac{\partial}{\partial p_n} w^{-n} \right) \CR
&= (1-t^{-1})(q^{-1}-1) \sum_{n=1}^\infty \sum_{m,\ell=0}^\infty
[[n]]_{(q^{-1},t^{-1})} \cdot C_m [p, \partial/\partial p] \cdot \widetilde{c}_\ell \left[ \partial/ \partial p \right] z^{-(n+\ell)} w^{n+m},
\end{align}
where $[[n]]_{(t_1,t_2)}$ is defined by the generating function \eqref{qt-integer1}.
Note that we have (see \eqref{qt-integer2} and \eqref{qt-integer3})
\begin{equation}
[[n]]_{(t_1,t_2)} = \frac{t_1^n - t_2^n}{t_1 - t_2} = t_2^{n-1} \sum_{k=0}^{n-1} (t_1/t_2)^k.
\end{equation}
%Hence,
%\begin{align}
%& (1-t^{-1})(q^{-1}-1) [[n]]_{(q^{-1},t^{-1})} = \frac{(t-1)(1-q)}{tq} \frac{q^{-n} - t^{-n}}{q^{-1} - t^{-1}} \CR
%&= (t-1)(1-q)\frac{q^n -t^n}{(qt)^n (q-t)} =  - q \cdot b[n,0].
%\end{align}

Extracting the relevant coefficient of \eqref{generatingC}, we have
\begin{align}
\Big[ \widetilde{c}_k \left[ \partial/ \partial p \right], C_n \left[ p , \partial/\partial p \right]  \Big] 
&= (1-t^{-1})(q^{-1}-1) \sum_{r=1}^{k} [[r]]_{(q^{-1}, t^{-1})} 
 C_{n-r} [p, \partial/\partial p] \cdot \widetilde{c}_{k-r} \left[ \partial/ \partial p \right].
\end{align}
Substituting this to \eqref{H-bilinear}, we obtain a general formula of the fermion bi-linear terms 
\begin{align}\label{Fer-bi-linear}
& \left[ E_{2,+1}, F_{2,-1} \right]_{\pi_k \neq 0} \CR
& = (1-t^{-1})(1-q^{-1}) 
\sum_{k=1}^\infty \sum_{n=1}^\infty  \sum_{r=1}^{k} [[r]]_{(q^{-1},t^{-1})} 
 C_{n-r} [p, \partial/\partial p] \cdot \widetilde{c}_{k-r} \left[ \partial/ \partial p \right] 
q^{n} \pi_k \frac{\partial}{\partial \pi_n}. 
\end{align}
In particular the terms involving $\pi_1$ are
\begin{equation}\label{pi1-term}
(q-1)(1-t^{-1})\sum_{n=1}^\infty  C_{n-1} [p, \partial/\partial p]
q^{n-1} \pi_1 \frac{\partial}{\partial \pi_n}. 
\end{equation}
%

%%%%%%%%%%%%%%%%%%%%%%%%%%%%%%%%%%%%%%%%%%%%%%%%%%%%%%%%%%%%%%%%%%%%%%%%%%%%%%%%%%%%%%%%%%%%%%%%%%%%%%%%%%%%%%%%%%%%%%%%%%

\subsection{$H_{2, +1}$ and $H_{1, +1}$ as the anti-commutator of the super charges}

Finally the Hamiltonians $H_{2, +1}$ and $H_{1, +1}$ are expressed as anti-commutator of the super charges. 
\begin{prp}
\label{prop:positive-Hamiltonian}
\begin{align}
\left[ E_{2, +1}, F_{2,-1} \right]_{+} &= (t/q)^{\frac{1}{2}}u (1+ u^{-1}(q/t)^{\frac{1}{2}}H_{2,+1}), \\
\left[ E_{1,0}, F_{1,2} \right]_{+} &= H_{1,+1}.
\end{align}
\end{prp}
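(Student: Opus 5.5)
The cleanest route avoids expanding the vertex operators directly. Instead, I will combine Lemma \ref{lemma-EF}, which expresses $E_{2,1}$ and $F_{1,2}$ as commutators with the Hamiltonians $H_{1,+1}$ and $H_{2,+1}$ of \cite{Galakhov:2025phf}, with the commutativity results of Appendix C (Prop.~\ref{Prop:commutativity+}) and the already established anti-commutators at the lower modes. Write $\kappa=(q^{\frac12}-q^{-\frac12})(t^{\frac12}-t^{-\frac12})$.

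\textbf{Second relation.} Start from $\kappa F_{1,2}=[H_{2,+1},F_{1,1}]$ and $E_{1,0}=\pi_1$. The super Jacobi identity gives
\begin{equation*}
\kappa\,[E_{1,0},F_{1,2}]_+=\big[H_{2,+1},[E_{1,0},F_{1,1}]_+\big]-\big[[H_{2,+1},E_{1,0}],F_{1,1}\big]_+ .
\end{equation*}
Since $[E_{1,0},F_{1,1}]_+=1$, the first term vanishes. The algebra relation $[H_{2,1},E_{1,0}]=-\kappa E_{1,1}$ (here $\epsilon_{21}=-1$) turns the second term into $\kappa[E_{1,1},F_{1,1}]_+$, which equals $\kappa H_{1,1}$ by the mode relation \eqref{H11-com}.

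This argument uses the algebra relation $[H_{2,1},E_{1,0}]\propto E_{1,1}$, which is not independently established. It is therefore safer to compute directly. From Conjecture \ref{E21-F12-con}, $u^{-1}F_{1,2}$ is an integral whose only dependence on $\partial/\partial\pi_1$ is through the $k=1$ term, because $\widetilde V_F^{(\pm)}$ involve only $\pi_{k\ge2}$. Hence $[\pi_1,u^{-1}F_{1,2}]_+$ equals the $k=1$ integrand with $\partial/\partial\pi_1$ stripped:
\begin{equation*}
\oint\frac{dw}{w}\,V_B^{(-)}(w)V_B^{(+)}(w)\,\langle\varnothing|\widetilde V_F^{(-)}(w)\widetilde V_F^{(+)}(w)|\varnothing\rangle_F .
\end{equation*}
This is precisely the integral formula for $u^{-1}H_{1,+1}=\widehat{\mathcal H}^{\overline{\mathsf t},+}$ recalled in Appendix C, under the dictionary \eqref{R-dictionary+}. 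So the second identity reduces to matching normalizations and the $(q,t)$ conventions, which is a finite check.

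\textbf{First relation.} Here I would use $\kappa E_{2,1}=[H_{1,+1},E_{2,0}]$ and the Jacobi identity:
\begin{equation*}
\kappa[E_{2,1},F_{2,-1}]_+=\big[H_{1,+1},[E_{2,0},F_{2,-1}]_+\big]+\big[E_{2,0},[H_{1,+1},F_{2,-1}]\big]_+ .
\end{equation*}
\begin{enumerate}
\item The inner anti-commutator $[E_{2,0},F_{2,-1}]_+=1-(t/q)^{\frac12}uH_{2,-1}$ is known. It should commute with $H_{1,+1}$, since both are diagonal on the super Macdonald basis. Operator-theoretically, however, this is not yet established, so I would not rely on it.
\item The cleaner route is again direct. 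Write $F_{2,-1}=-\sum_k\pi_k\widetilde c_k$. Acting with $\pi_k$ inside the fermionic vacuum expectation value of $E_{2,1}$, anti-commuting $\partial/\partial\pi_n$ past $\pi_k$ produces $\delta_{kn}$ terms plus bilinear rearrangements. The result is a single integral of the form $\oint\frac{dw}{w}V_B^{(-)}V_B^{(+)}\langle\cdots\rangle_F$ in which the $\pi_k\widetilde c_k$ insertion is absorbed into the exponent of $\widetilde V_F^{(-)}$.
\item This step requires the shifted fermion sum $\sum_{m=0}^{k-1}t^m\psi_m$, i.e.\ passing from $\widetilde V_F$ to $V_F$. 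That shift is exactly the structure of the $\widehat{\mathcal H}^{\mathsf t,+}$ formula in \eqref{H_2+-formula}.
\item The bosonic part then matches by the Remark preceding the proposition, via \eqref{bi-linear2}. The fermion-bilinear part matches \eqref{Fer-bi-linear}.
\end{enumerate}

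The main obstacle is the first identity to all orders in fermions. One must show that $\sum_k\widetilde c_k[\partial/\partial p]\,\pi_k$, commuted through $V_B^{(+)}$ and combined with $\partial/\partial\pi_n$, reorganizes the $\widetilde V_F^{(\pm)}$ expectation value into the $V_F^{(\pm)}$ one. I would attack this as in the proof of Lemma \ref{lemma-EF}: commute $\widetilde c_k$ through $V_B^{(-)}(w)$, which produces the factor $\exp\big(\sum(q^{-n}-1)\ldots\big)$ evaluated at $w^2$. Then use Lemma \ref{C-lemma} to kill the leftover boundary terms. The mismatch between the summation ranges $2\le n,m$ and $1\le n,m$ should produce exactly the constant $(t/q)^{\frac12}u$, in the same way that the surviving $k=1$, $m=0$ term appeared in that proof.
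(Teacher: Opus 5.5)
Your second identity is the paper's argument and is complete. $\widetilde V_F^{(\pm)}$ contain neither $\pi_1$ nor $\partial/\partial\pi_1$, so only the $k=1$ term of $u^{-1}F_{1,2}$ survives the anticommutator with $\pi_1$. What remains is literally \eqref{H_1+-formula}, so there is nothing left to match. You are also right to drop the Jacobi route for the first identity, and in fact it would be circular. The paper controls the commutator of $H_{1,+1}$ with $[E_{2,0},F_{2,-1}]_+$ only at the end of Section~\ref{section;integral-formula}, and it does so using this very proposition together with an explicit $F_{2,0}$.

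The gap is the first identity. Direct computation is the correct route (it is the paper's Lemma~\ref{Lemma6}), but your sketch stops where the proof actually happens, and the one mechanism you commit to is misplaced. The constant (the $1$ in $1+u^{-1}(q/t)^{\frac12}H_{2,+1}$, i.e.\ your $(t/q)^{\frac12}u$) does not come from the $2\le n,m$ versus $1\le n,m$ mismatch. It comes from the direct contraction of $\pi_n$ in $F_{2,-1}$ with the explicit $\partial/\partial\pi_n$ of $E_{2,1}$. Write $\widetilde c_n$ for $\widetilde c_n[\partial/\partial p]$. Then \eqref{bi-linear2} turns $\sum_{n=1}^{\ell}q^n\widetilde c_n\widetilde c^\vee_{\ell-n}$ into $-\widetilde c^\vee_\ell$ for $\ell\ge1$. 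The absent $n=0$ term (since $F_{2,-1}$ starts at $\pi_1$) leaves $1-\oint\frac{dw}{w}V_B^{(-)}V_B^{(+)}\langle\varnothing\vert\widetilde V_F^{(-)}\widetilde V_F^{(+)}\vert\varnothing\rangle_F$, exactly as in the bosonic Remark.

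The range mismatch of Lemma~\ref{lemma-EF} acts on the other terms. These are $\pi_n[\widetilde c_n,\,\cdot\,]$, with $\widetilde c_n$ commuted through $V_B^{(-)}(w)$ (not $V_B^{(+)}$, with which it commutes), and the part of $[\pi_n,\,\cdot\,]_+$ where $\pi_n$ hits the $\partial/\partial\pi$'s inside $\widetilde V_F^{(+)}$. Their incomplete cancellation leaves
\[
-(q-1)(1-t^{-1})\sum_{k,\ell\ge1}\oint\frac{dw}{w}\,w^{2(\ell-k)}V_B^{(-)}(w)V_B^{(+)}(w)\,q^{k-1}t^{1-\ell}\,\pi_\ell\,\langle\varnothing\vert\widetilde V_F^{(-)}(w)\widetilde V_F^{(+)}(w)\vert\varnothing\rangle_F\,\frac{\partial}{\partial\pi_k}.
\]

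The idea your sketch lacks is that this term is exactly the $\psi_0\psi_0^\dagger$ contraction. By \eqref{H_1andH_2},
\[
\bigl(1+(1-t^{-1})(q-1)\sum_{k,\ell}t^{1-\ell}q^{k-1}w^{2(\ell-k)}\pi_\ell\,\partial/\partial\pi_k\bigr)\langle\varnothing\vert\widetilde V_F^{(-)}\widetilde V_F^{(+)}\vert\varnothing\rangle_F=\langle\varnothing\vert V_F^{(-)}V_F^{(+)}\vert\varnothing\rangle_F .
\]
So it is not just the $\pi_\ell$ of $F_{2,-1}$ that gets absorbed, and it is absorbed as the $\psi_0$ term of $V_F^{(-)}$, not into $\widetilde V_F^{(-)}$. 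The explicit $\partial/\partial\pi_k$ of $E_{2,1}$ must at the same time be read as the $\psi_0^\dagger$ term of $V_F^{(+)}$. Only then does everything collapse to $1-\oint\frac{dw}{w}V_B^{(-)}V_B^{(+)}\langle\varnothing\vert V_F^{(-)}V_F^{(+)}\vert\varnothing\rangle_F$, which equals $1+u^{-1}(q/t)^{\frac12}H_{2,+1}$ by \eqref{H_2+-formula}.

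Your item 4 cannot replace this step. \eqref{Fer-bi-linear} was derived from the truncation \eqref{E21-lin}, so it only checks the identity through fermion-bilinear order.
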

This means the Hamiltonians defined by \eqref{H_1+-formula} and \eqref{H_2+-formula} are consistent with the commutation relation 
of the shifted quantum toroidal algebra. Since $E_{1,0} = \pi_1$, it is easy to see the second commutation relation. 
To prove  the first relation we need 
\begin{lem}\label{Lemma6}
Setting $\widetilde{E}_{2,1} = (q/t)^{\frac{1}{2}} u^{-1} E_{2,1}$, we have
\begin{align}
\left[ \widetilde{E}_{2,1}, F_{2,-1} \right]_{+}
&= 1 - \oint \frac{dw}{w} V_B^{(-)}(w) V_B^{(+)}(w) \langle \varnothing \vert \widetilde{V}_F^{(-)}(w) \widetilde{V}_F^{(+)}(w) \vert \varnothing\rangle_{F} \CR
& - (q-1)(1-t^{-1})  \sum_{k=1}^\infty  \sum_{\ell=1}^\infty \oint \frac{dw}{w} w^{2(\ell-k)} V_B^{(-)}(w) V_B^{(+)}(w) \CR
& \qquad \times q^{k-1} t^{1-\ell} \pi_\ell  \langle \varnothing \vert \widetilde{V}_F^{(-)}(w) \widetilde{V}_F^{(+)}(w) 
\vert \varnothing\rangle_{F}\frac{\partial}{\partial \pi_k}.
\end{align}
\end{lem}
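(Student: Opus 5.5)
We compute the anti-commutator directly from Conjecture \ref{E21-F12-con}, which gives
\begin{equation*}
\widetilde{E}_{2,1} = -\sum_{k\ge1}\oint\frac{dw}{w}\, w^{-2k}\, V_B^{(-)}(w)V_B^{(+)}(w)\, \mathcal{G}(w)\, q^k\frac{\partial}{\partial\pi_k},
\end{equation*}
where we abbreviate $\mathcal{G}(w):=\langle\varnothing\vert\widetilde V_F^{(-)}(w)\widetilde V_F^{(+)}(w)\vert\varnothing\rangle_F$. We pair this with $F_{2,-1}=-\sum_\ell \pi_\ell\,\widetilde c_\ell[\partial/\partial p]$ from \eqref{F-rep}. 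Since $F_{2,-1}$ is linear in $\pi_\ell$ with a purely bosonic coefficient, the anti-commutator has three sources.

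\emph{(a) Contraction $\{\partial/\partial\pi_k,\pi_\ell\}=\delta_{k\ell}$.} This gives $\sum_k\oint\frac{dw}{w}w^{-2k}q^k V_B^{(-)}V_B^{(+)}\mathcal G\,\widetilde c_k$. Here $\widetilde c_k$ stands to the right of the vertex operators, so it must be reordered.

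\emph{(b) Bosonic commutator.} This is $[V_B^{(-)}(w),\widetilde c_\ell]$, multiplied by the fermionic expression $\pi_\ell\,\mathcal G\,\partial/\partial\pi_k$.

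\emph{(c) Commutator of $\pi_\ell$ with the fermion-dependent part of $\mathcal G$.} Since $\mathcal G$ contains $\partial/\partial\pi_m$ through $\widetilde V_F^{(+)}$, this piece is nonzero.

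For (a) and (b) the tool is the generating-function identity already used in \eqref{generatingC}. Commuting $\exp(\sum(q^{-n}-1)\partial_{p_n}z^{-n})$ past $V_B^{(-)}(w)$ produces the factor $\frac{(1-t^{-1}w^2/z)\,(\ldots)}{(1-w^2/z)\,(\ldots)}$, which is rational in $w^2/z$. We then resum $\sum_k q^k w^{-2k}\widetilde c_k$ as a generating function evaluated at $z=q^{-1}w^2$. This combines with $V_B^{(+)}(w)=\exp\sum(q^k-1)\partial_{p_k}w^{-2k}$, and the product $\exp\sum(q^{-k}-1)\partial_{p_k}(q^{-1}w^2)^{-k}\cdot\exp\sum(q^k-1)\partial_{p_k}w^{-2k}$ telescopes. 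This is the mechanism by which the bilinear relations \eqref{bi-linear} and Lemma \ref{C-lemma} enter: summing over $k$ should kill most of the bosonic operator content. What survives is the $k=0$ boundary term, which produces the constant $1$, together with minus the full integral $\oint V_B^{(-)}V_B^{(+)}\mathcal G$. This accounts for the first line of the statement. In practice we expect to verify this by applying \eqref{C-sum2} coefficient by coefficient, using the expansion \eqref{C-def}.

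For the fermion bilinear part we combine (b) and (c). The piece (b) produces, through \eqref{generatingC}, sums of $[[r]]_{(q^{-1},t^{-1})}$ weighted by shifted $\widetilde c$'s, as in \eqref{Fer-bi-linear}. The piece (c) is computed by moving $\pi_\ell$ through $\widetilde V_F^{(+)}$, which replaces $\partial/\partial\pi_m$ by a $\delta$ and inserts $(q-1)q^{m-2}w^{-2m+1}\sum_{j\le m-2}q^{-j}\psi_j^\dagger$ inside the vacuum expectation value. That contracts against the $\psi_j$ coming from $\widetilde V_F^{(-)}$. We then reorganize the resulting triple sums exactly as in \eqref{change-sum-order}, exchanging the order of summation over the region symmetric in the two indices. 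The expectation is that (b) and (c) cancel except for the boundary terms, leaving precisely $-(q-1)(1-t^{-1})\sum_{k,\ell}w^{2(\ell-k)}q^{k-1}t^{1-\ell}\pi_\ell\,\mathcal G\,\partial/\partial\pi_k$ inside the integral. The prefactors $t^{1-\ell}$ and $q^{k-1}$ come from the geometric factors $t^{2-k}$ and $q^{k-2}$ in $\widetilde V_F^{(\mp)}$.

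We expect the main obstacle to be this last cancellation. The index ranges start at $2$ in $\widetilde V_F^{(\pm)}$, as opposed to $1$ in the unshifted $V_F^{(\pm)}$, and we must track carefully which boundary terms survive; this is the same subtlety that appears in the proof of Lemma \ref{lemma-EF}. We would first check the formula at fermion-linear order on $\pi_1\,\partial/\partial\pi_n$ against \eqref{pi1-term}, which serves as a consistency test, before carrying out the general resummation.
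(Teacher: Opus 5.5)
Your decomposition and tools are the paper's. It uses the same three contributions: the contraction, $[\widetilde c_\ell,V_B^{(-)}(w)]$, and $\pi_\ell$ hitting $\partial/\partial\pi_m$ inside $\widetilde V_F^{(+)}$. It uses the same resummation through the bilinear relation, which is your telescoping $\sum_{k\ge1}q^k w^{-2k}\widetilde c_k=V_B^{(+)}(w)^{-1}-1$. And it uses the same reordering of triple sums as in the proof of Lemma \ref{lemma-EF}.

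What fails is how you distribute the answer between the pieces: neither of your two intermediate claims holds on its own. The telescoping gives exactly
\[
\text{(a)}=\oint\frac{dw}{w}\,V_B^{(-)}(w)\,\mathcal G(w)-\oint\frac{dw}{w}\,V_B^{(-)}(w)V_B^{(+)}(w)\,\mathcal G(w),
\]
and the first integral is not the constant $1$. The reason is that $\mathcal G(w)-1$ contains terms $w^{2(a-b)}\pi_a\,\partial/\partial\pi_b$ with $2\le a\le b$, and these pair with $c^\vee_{b-a}[p]$ from $V_B^{(-)}(w)$. For instance, $(1-t^{-1})(q-1)\,\pi_2\,\partial/\partial\pi_2$ appears at order $w^0$.

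Test this on $\pi_2$. Term (a) annihilates $\pi_2$, since every $\widetilde c_k$ with $k\ge1$ does. But $1-\oint V_B^{(-)}V_B^{(+)}\mathcal G$ sends $\pi_2$ to $(1-q)(1-t^{-1})\pi_2$.

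Correspondingly, (b)+(c) is not the last line of the statement. It is the last line minus $\oint\frac{dw}{w}V_B^{(-)}(w)\bigl(\mathcal G(w)-1\bigr)$. On $\pi_2$ the whole anticommutator reduces to (b)+(c) and equals $q(1-q)(1-t^{-1})^2p_1\pi_1+(1-q)(1-t^{-1})(1+q/t)\pi_2$. The last line alone gives only $q(1-q)(1-t^{-1})^2p_1\pi_1+(1-q)(1-t^{-1})(q/t)\pi_2$.

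So the cancellation you expect in (b)+(c) does not leave only a boundary term. Besides the $\widetilde c_0$ contribution, which produces the last line, a piece remains that contains a full sum $\sum_{n\ge0}q^nw^{-2n}\widetilde c_n$. This piece collapses against $V_B^{(+)}(w)$ to an integral of $V_B^{(-)}(w)$ alone. You must show it equals $-\oint V_B^{(-)}(\mathcal G-1)$, which then cancels the leftover from (a).

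The lemma itself is correct; the two sides agree, e.g., on $\pi_1,\pi_2,\pi_3$. You only have to carry this term across. The paper's write-up makes the same split with the same slip. It applies $\sum_{n\ge1,\,n+m=\ell}q^n\widetilde c_n\widetilde c^\vee_m=-\widetilde c^\vee_\ell$ also at $\ell=0$, where the left side vanishes. Following it literally will not close the argument either.
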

In fact applying the relation \eqref{H_1andH_2} to Lemma \ref{Lemma6}, we can replace 
$\langle \varnothing \vert \widetilde{V}_F^{(-)}(w) \widetilde{V}_F^{(+)}(w) \vert \varnothing\rangle_{F}$
with $\langle \varnothing \vert V_F^{(-)}(w) V_F^{(+)}(w) \vert \varnothing\rangle_{F}$. Namely
\begin{align}
\left[ \widetilde{E}_{2,1}, F_{2,-1} \right]
&= 1- \oint \frac{dw}{w} V_B^{(-)}(w) V_B^{(+)}(w) \langle \varnothing \vert V_F^{(-)}(w) V_F^{(+)}(w) \vert \varnothing\rangle_{F} \CR
&=  1 + u^{-1} (q/t)^{\frac{1}{2}} H_{2,+1},
\end{align}
which is the desired result. 

\begin{proof}
We prove  Lemma \ref{Lemma6},
Recall
$$
F_{2,-1} = - \sum_{k=1}^\infty \pi_k \widetilde{c}_k[\partial/\partial p].
$$
From Prop 6.2 we have
$$
- (q/t)^{\frac{1}{2}} u^{-1} E_{2,1} =
\sum_{k=1}^\infty \oint \frac{dw}{w} w^{-2k} V_B^{(-)}(w) V_B^{(+)}(w)  
\langle \varnothing \vert \widetilde{V}_F^{(-)}(w) \widetilde{V}_F^{(+)}(w) \vert \varnothing\rangle_{F} q^{k} \frac{\partial}{\partial \pi_k}.
$$
For simplicity we first look at the leading bosonic terms,
$$
-(q/t)^{\frac{1}{2}} u^{-1} E_{2,1} =
\sum_{k=1}^\infty \oint \frac{dw}{w} w^{-2k} V_B^{(-)}(w) V_B^{(+)}(w) q^{k} \frac{\partial}{\partial \pi_k}.
$$
We have
\begin{align}
(q/t)^{\frac{1}{2}} u^{-1} \left[ E_{2, +1}, F_{2,-1} \right]_{+} 
&= \sum_{k=1}^\infty \sum_{\ell=1}^\infty \left[ C_k[p, \partial/\partial p] q^k \frac{\partial}{\partial \pi_k}, 
 \pi_\ell \widetilde{c}_\ell[\partial/\partial p] \right]_{+} \CR
&= \sum_{m=1}^\infty \sum_{k=1}^m c_m^\vee[p] \widetilde{c}_{m-k}^\vee [\partial/\partial p] q^k \widetilde{c}_k[\partial/\partial p] \CR
&= -\sum_{m=1}^\infty c_m^\vee[p] \widetilde{c}_m^\vee[\partial/\partial p] \CR
&= 1+ u^{-1}(q/t)^{\frac{1}{2}}H_{2,+1}.
\end{align}
Hence
\begin{equation}
-u^{-1}(q/t)^{\frac{1}{2}} H_{2,+1} = 1 + \sum_{m=1}^\infty c_m^\vee[p] \widetilde{c}_m^\vee[\partial/\partial p] 
= \oint \frac{dw}{w} V_B^{(-)}(w) V_B^{(+)}(w).
\end{equation}

In general, we have
\begin{align}
\left[ \widetilde{E}_{2,1}, F_{2,-1} \right]_{+} = \left[ F_{2,-1}, \widetilde{E}_{2,1}\right]_{+} 
&= \sum_{n=1}^\infty \left[\pi_n, \widetilde{E}_{2,1}\right]_{+} \widetilde{c}_n[\partial/\partial p]
+ \sum_{n=1}^\infty \pi_n \left[\widetilde{c}_n[\partial/\partial p], \widetilde{E}_{2,1}\right].
\end{align}
The commutation relations
\begin{align}
\left[\widetilde{c}_n[\partial/\partial p], \widetilde{E}_{2,1}\right]
&= (1-q^{-1})(t^{-1}-1) \sum_{k=1}^\infty \sum_{m=0}^{n-1}  \oint \frac{dw}{w} w^{2(n-m-k)}
t^{1-n+m} b_{n-m}(q^{-1},t^{-1}) \CR
& \qquad \times V_B^{(-)}(w) V_B^{(+)}(w)~\widetilde{c}_m[\partial/\partial p]~
\langle \varnothing \vert \widetilde{V}_F^{(-)}(w) \widetilde{V}_F^{(+)}(w) 
\vert \varnothing\rangle_{F} q^{k} \frac{\partial}{\partial \pi_k}, \\
\left[ \pi_n , \widetilde{E}_{2,1}\right]_{+} 
&= \oint \frac{dw}{w} w^{-2n} q^n V_B^{(-)}(w) V_B^{(+)}(w)  
\langle \varnothing \vert \widetilde{V}_F^{(-)}(w) \widetilde{V}_F^{(+)}(w) \vert \varnothing\rangle_{F} \CR
& \qquad + \theta(n \geq2) (q-1)(1-t^{-1}) \sum_{k=1}^\infty \sum_{m=0}^{n-2} \sum_{\ell=m+2}^\infty 
\oint \frac{dw}{w} w^{2(\ell-k-n)} \CR
& \qquad \times q^{k+n-2} (t/q)^m t^{2-\ell}  V_B^{(-)}(w) V_B^{(+)}(w)  \pi_\ell 
\langle \varnothing \vert \widetilde{V}_F^{(-)}(w) \widetilde{V}_F^{(+)}(w) 
\vert \varnothing\rangle_{F} \frac{\partial}{\partial \pi_k},
\end{align}
imply 
\begin{align}
\left[ \widetilde{E}_{2,1}, F_{2,-1} \right]
&= \sum_{n=1}^\infty  \oint \frac{dw}{w} w^{-2n} V_B^{(-)}(w) V_B^{(+)}(w)~q^n\widetilde{c}_n[\partial/\partial p]
\langle \varnothing \vert \widetilde{V}_F^{(-)}(w) \widetilde{V}_F^{(+)}(w) \vert \varnothing\rangle_{F} \CR
& +  (q-1)(t^{-1}-1) \sum_{n=1}^\infty\sum_{k=1}^\infty \sum_{m=0}^{n-1} \oint \frac{dw}{w} w^{2(n-m-k)}
t^{1-n+m} b_{n-m}(q^{-1},t^{-1})\CR
& \qquad \times V_B^{(-)}(w) V_B^{(+)}(w)~\widetilde{c}_m[\partial/\partial p] 
  \pi_n  \langle \varnothing \vert \widetilde{V}_F^{(-)}(w) \widetilde{V}_F^{(+)}(w) 
\vert \varnothing\rangle_{F} q^{k-1} \frac{\partial}{\partial \pi_k} \CR
& + (q-1)(t^{-1}-1)  \sum_{k=1}^\infty  \sum_{n=2}^\infty  \sum_{m=2}^{n} \sum_{\ell=m}^\infty
\oint \frac{dw}{w} w^{2(\ell-k-n)} V_B^{(-)}(w) V_B^{(+)}(w) \widetilde{c}_n[\partial/\partial p] \CR
& \qquad \times q^{k+n-m} t^{m-\ell} \pi_\ell 
\langle \varnothing \vert \widetilde{V}_F^{(-)}(w) \widetilde{V}_F^{(+)}(w) 
\vert \varnothing\rangle_{F}\frac{\partial}{\partial \pi_k}.
\end{align}
Then by the same computation as above the first term is simplified to
\begin{equation}
 1 - \oint \frac{dw}{w} V_B^{(-)}(w) V_B^{(+)}(w) \langle \varnothing \vert \widetilde{V}_F^{(-)}(w) \widetilde{V}_F^{(+)}(w) \vert \varnothing\rangle_{F}.
 \end{equation}
To confirm this, let us assume the expansion
\begin{equation}
\langle \varnothing \vert \widetilde{V}_F^{(-)}(w) \widetilde{V}_F^{(+)}(w) \vert \varnothing\rangle_{F}
= 1+ \sum_{m \in \mathbb{Z}} d_m[\pi, \partial/\partial \pi] w^{-2m}, %\qquad d_0=1 ??
\end{equation} 
and use the identity 
\begin{equation}
\sum_{n+m = \ell, n\geq 1} q^n \widetilde{c}_n[\partial/\partial p]  \widetilde{c}^\vee_m [\partial/\partial p] = -  \widetilde{c}^\vee_\ell [\partial/\partial p],
\end{equation}
to obtain
\begin{align}
 &\sum_{n=1}^\infty  \oint \frac{dw}{w} w^{-2n} V_B^{(-)}(w) V_B^{(+)}(w)~q^n\widetilde{c}_n[\partial/\partial p]
\langle \varnothing \vert \widetilde{V}_F^{(-)}(w) \widetilde{V}_F^{(+)}(w) \vert \varnothing\rangle_{F} \CR
& = \sum_{n=1}^\infty \sum_{k=0}^\infty \sum_{\ell=0}^\infty  \oint \frac{dw}{w}  w^{2(k-n-\ell)} c_k^\vee[p] \widetilde{c}_\ell^\vee[\partial/\partial p] 
q^n \widetilde{c}_n[\partial/\partial p]  \left( 1+ \sum_{m \in \mathbb{Z}} d_m[\pi, \partial/\partial \pi] w^{-2m} \right) \CR
&= -\sum_{m=1}^\infty c_m^\vee[p] \widetilde{c}_m^\vee[\partial/\partial p] 
-  \sum_{k=0}^\infty \sum_{n=0}^\infty   c_k^\vee[p] \widetilde{c}_n^\vee[\partial/\partial p] d_{k-n} [\pi, \partial/\partial \pi] \CR
& = 1 -  \oint \frac{dw}{w} V_B^{(-)}(w) V_B^{(+)}(w) \langle \varnothing \vert \widetilde{V}_F^{(-)}(w) \widetilde{V}_F^{(+)}(w) \vert \varnothing\rangle_{F}.
\end{align}
%Then we have
%\begin{equation}
% \oint \frac{dw}{w} V_B^{(-)}(w) V_B^{(+)}(w) \langle \varnothing \vert \widetilde{V}_F^{(-)}(w) \widetilde{V}_F^{(+)}(w) \vert \varnothing\rangle_{F}
% = \sum_{k=1}^\infty  \sum_{\ell=1}^\infty  c_k^\vee[p] \widetilde{c}_\ell^\vee [\partial/\partial p] d_{k-\ell}[\pi, \partial/\partial \pi]. 
%\end{equation}

For the remaining terms we exchange the summation over $n$ and $\ell$ for the third term to obtain
%&= 1 - \oint \frac{dw}{w} V_B^{(-)}(w) V_B^{(+)}(w) \langle \varnothing \vert \widetilde{V}_F^{(-)}(w) \widetilde{V}_F^{(+)}(w) \vert \varnothing\rangle_{F} \CR
\begin{align}
& (q-1)(t^{-1}-1) \sum_{k=1}^\infty \sum_{\ell=1}^\infty \sum_{n=0}^{\ell-1} \oint \frac{dw}{w} w^{2(\ell-n-k)}
t^{1-\ell+n} b_{\ell-n}(q^{-1},t^{-1})\CR
& \qquad \times V_B^{(-)}(w) V_B^{(+)}(w)~\widetilde{c}_n[\partial/\partial p] 
  \pi_\ell  \langle \varnothing \vert \widetilde{V}_F^{(-)}(w) \widetilde{V}_F^{(+)}(w) 
\vert \varnothing\rangle_{F} q^{k-1} \frac{\partial}{\partial \pi_k} \CR
& + (q-1)(t^{-1}-1)  \sum_{k=1}^\infty  \sum_{\ell=2}^\infty  \sum_{m=2}^{\ell} \sum_{n=m}^\infty
\oint \frac{dw}{w} w^{2(\ell-k-n)} V_B^{(-)}(w) V_B^{(+)}(w) \widetilde{c}_n[\partial/\partial p] \CR
& \qquad \times q^{k+n-m} t^{m-\ell} \pi_\ell 
\langle \varnothing \vert \widetilde{V}_F^{(-)}(w) \widetilde{V}_F^{(+)}(w) 
\vert \varnothing\rangle_{F}\frac{\partial}{\partial \pi_k}.
\end{align}
Let us look at the coefficient of $t^{1-\ell}  q^{k-1} \pi_\ell\frac{\partial}{\partial \pi_k}$;
\begin{align}
& \sum_{n=0}^{\ell-1} \sum_{r=1}^{\ell-n} \oint \frac{dw}{w} w^{2(\ell-k-n)} t^{n+r-1} q^{-r+1} \widetilde{c}_{n} [\partial/\partial p] \CR
& \qquad + \theta( \ell \geq 2) \sum_{m=2}^{\ell} \sum_{n=m}^\infty  \oint \frac{dw}{w} w^{2(\ell-k-n)}  t^{m-1} q^{n-m+1}\widetilde{c}_{n} [\partial/\partial p] \CR
&=  \sum_{m=1}^{\ell}\sum_{n=0}^{m-1} \oint \frac{dw}{w} w^{2(\ell-k-n)} t^{m-1} q^{n-m+1} \widetilde{c}_{n} [\partial/\partial p] \CR
& \qquad + \theta( \ell \geq 2) \sum_{m=2}^{\ell} \sum_{n=m}^\infty  \oint \frac{dw}{w} w^{2(\ell-k-n)}  t^{m-1} q^{n-m+1} \widetilde{c}_{n} [\partial/\partial p] \CR
&=  \oint \frac{dw}{w} w^{2(\ell-k)} \widetilde{c}_{0} [\partial/\partial p] 
+  \sum_{m=2}^{\ell} \sum_{n=0}^\infty  \oint \frac{dw}{w} w^{2(\ell-k-n)}  t^{m-1} q^{n-m+1} \widetilde{c}_{n} [\partial/\partial p].
\end{align}
Hence, by the similar argument as the proof of Lemma \ref{lemma-EF} the sum of the remaining terms is simplified to
\begin{align}
& - (q-1)(1-t^{-1})  \sum_{k=1}^\infty  \sum_{\ell=1}^\infty \oint \frac{dw}{w} w^{2(\ell-k)} V_B^{(-)}(w) V_B^{(+)}(w) \CR
& \qquad \times q^{k-1} t^{1-\ell} \pi_\ell  \langle \varnothing \vert \widetilde{V}_F^{(-)}(w) \widetilde{V}_F^{(+)}(w) 
\vert \varnothing\rangle_{F}\frac{\partial}{\partial \pi_k}.
\end{align}
\end{proof}

\subsection{Commutativity of $H_{i,-1}$ and $H_{i,+1}$}

In a level zero representation the Hamiltonians are mutually commuting. We can check it explicitly based on the formula 
of $H_{i,-1}$ and $H_{i,+1}$ we have obtained. 
Recall that $H_{i, -1}$ is expressed as the anti-commutator of the super charges;
\begin{align}
H_{2,-1} &= u^{-1}(q/t)^{\frac{1}{2}} (1- \big[ E_{2.0}, F_{2,-1} \big]),  \\
H_{1,-1} &= -u  \big[ E_{1.-1}, F_{1,0} \big].
\end{align}
We have
\begin{align}
\big[ H_{2,+1}, H_{2,-1} \big]
&=  - u^{-1}(q/t)^{\frac{1}{2}} \big[ H_{2,+1}, \big[ E_{2,0}, F_{2,-1} \big] \big] \CR
&= u^{-1}(q/t)^{\frac{1}{2}} \left(\big[ E_{2,0}, \big[ F_{2,-1}, H_{2,+1} \big] \big]  - \big[ F_{2,-1}, \big[ H_{2,+1}, E_{2,0} \big] \big]  \right) =0,
\end{align}
where we have used Prop \ref{Prop:commutativity+}. We also have
\begin{align}
\big[ H_{2,+1}, H_{1,-1} \big]
&= -u \big[ H_{2,+1}, \big[ E_{1-1}, F_{1,0} \big] \big] \CR
&= u(q^{\frac{1}{2}}- q^{-\frac{1}{2}})(t^{\frac{1}{2}}- t^{-\frac{1}{2}}) 
\left( \big[ F_{1,0}, E_{1,0} \big] - \big[E_{1,-1}, F_{1,1} \big] \right) =0,
\end{align}
by using explicit formula for the super charges.
\begin{align}
\big[ H_{1,+1}, H_{1,-1} \big]
&=  - u  \big[ H_{1,+1}, \big[ E_{1,-1}, F_{1,0} \big] \big] \CR
&= u \left(\big[ E_{1,-1}, \big[ F_{1,0}, H_{1,+1} \big] \big]  - \big[ F_{1,0}, \big[ H_{1,+1}, E_{1,-1} \big] \big]  \right) =0,
\end{align}
where we have used Prop \ref{Prop:commutativity+} again. 
Finally computing similarly, we have
\begin{align}
\big[ H_{2,+1}, H_{1,-1} \big]  = u^{-1}(q-1)(1-t^{-1})
\left( \big[ E_{2,0}, F_{2,0} \big] - \big[ F_{2,-1}, E_{2,1} \big] \right).
\end{align}
To show the vanishing of the right hand side, we need an explicit formula for $F_{2,0}$, which we have not given yet. 
By using 
\begin{equation}
\big[ H_{1,+1}, F_{2,-1} \big] = - (q^{\frac{1}{2}}- q^{-\frac{1}{2}})(t^{\frac{1}{2}}- t^{-\frac{1}{2}})  F_{2,0},
\end{equation}
we obtain\footnote{The computation is quite parallel to that of $E_{2,1}$ (see Conjecture \ref{E21-F12-con}).}
\begin{equation}
F_{2,0} =  u (t/q)^{\frac{1}{2}} \sum_{k=1}^\infty \oint \frac{dw}{w} w^{2k} V_B^{(-)}(w) V_B^{(+)}(w)  
t^{-k} \pi_k \langle \varnothing \vert \widetilde{V}_F^{(-)}(w) \widetilde{V}_F^{(+)}(w) \vert \varnothing\rangle_{F}.
\end{equation}
Proposition \ref{prop:positive-Hamiltonian} gives  $\big[ F_{2,-1}, E_{2,1} \big]$. 
A parallel computation to the proof of Proposition \ref{prop:positive-Hamiltonian} implies that $\big[ E_{2,0}, F_{2,0} \big] $ 
agrees with it.\footnote{We should have $\big[ E_{2,0}, F_{2,0} \big] = K_{2,0}^{+} H_{2,+1} - K_{2,0}^{-} = \big[E_{2,1}, F_{2,-1} \big]$.}

%%%%%%%%%%%%%%%%%%%%%%%%%%%%%%%%%%%%%%%%%%%%%%%%%%%%%%%%%%%%%%

\vspace{5mm}
\begin{ack}
We would like to thank O.~Blondeau-Fournier, J.-E.~Bourgine, 
D.~Galakhov, Al.~Morozov, N.~Tselousov and S.~Yanagida for useful discussions. 
Our work is supported in part by Grants-in-Aid for Scientific Research (Kakenhi);
23K03087 (H.K.), 21K03180 (R.O. and J.S.) and 24K06753 (J.S.).
The work of R.O. was partly supported by the Research Institute for Mathematical Sciences,
an International Joint Usage/Research Center located in Kyoto University.
\end{ack}

%%%%%%%%%%%%%%%%%%%%%%%%%%%%%%%%%%%%%%%%%%%%%%%%%%%%%%%%%

\newpage

\appendix

\section{Super Macdonald polynomials up to level 4}
\label{App:Lower-Mac}

To write down the super Macdonald polynomials $\mathcal{M}_{\Lambda}(x, \theta;q,t)$ explicitly,
it is convenient to introduce the bosonic and the fermionic power sum 
polynomials defined by
\begin{equation}
p_k := \sum_{i=1}^N x_i^k, \qquad \pi_k := \sum_{i=1}^N \theta_i x_i^{k-1}.
\end{equation}
In the following we list the super Macdonald polynomials up to level $4$.\footnote{We change 
the convention in {\tt 2506.01415}; $(q^2, t^2) \to (q,t)$.}
In contrast to the Macdonald polynomials the super Macdonald polynomials do depend on the parameter even if we set $q=t$.
Another different feature from the Macdonald polynomials is that
the super Macdonald polynomials are not invariant under $(q,t) \to (q^{-1}, t^{-1})$.
\subsubsection*{Up to level $2$}
\begin{align*}
&\mathcal{M}_{(\frac{1}{2})} = \pi_1, \qquad \mathcal{M}_{(1)} =p_1, \\
& \mathcal{M}_{(\frac{3}{2})} = \frac{q(1-t)}{1-qt}p_1\pi_1 + \frac{1-q}{1-qt}\pi_2,
\qquad \mathcal{M}_{(1, \frac{1}{2})} = p_1 \pi_1 - \pi_2, \\
& \mathcal{M}_{(2)} = \frac{1}{2}\left( \frac{(1+q)(1-t)}{1-qt} p_1^2 + \frac{(1+t)(1-q)}{1-qt}p_2 \right), \\
& \mathcal{M}_{(1,1)} = \frac{1}{2}(p_1^2 - p_2), 
\qquad \mathcal{M}_{(\frac{3}{2},\frac{1}{2})} = \pi_2\pi_1.
\end{align*}
\subsubsection*{Level $\frac{5}{2}$}
There are four super Macdonald polynomials. They are all fermionic;
\begin{align*}
& \mathcal{M}_{(\frac{5}{2})} = \frac{q^2(1+q)(1-t)^2}{2(1-qt)(1-q^2t)}p_1^2 \pi_1
+ \frac{q^2(1-q)(1-t^2)}{2(1-qt)(1-q^2t)} p_2 \pi_1 \\
& \qquad \qquad + \frac{q(1-q^2)(1-t)}{(1-qt)(1-q^2t)} p_1 \pi_2 
+ \frac{(1-q)^2(1+q)}{(1-qt)(1-q^2t)} \pi_3, \\
& \mathcal{M}_{(2, \frac{1}{2})} = \frac{(1+q)(1-t)}{2(1-qt)} p_1^2 \pi_1
+ \frac{(1-q)(1+t)}{2(1-qt)} p_2 \pi_1 - \frac{q(1-t)}{1-qt} p_1 \pi_2 
- \frac{1-q}{1-qt} \pi_3 , \\
& \mathcal{M}_{(\frac{3}{2},1)} = \frac{q(1-t^2)}{2(1-qt^2)}(p_1^2 \pi_1 - p_2 \pi_1) 
+ \frac{1-q}{1- q t^2}( p_1 \pi_2 - \pi_3), \\
& \mathcal{M}_{(1,1, \frac{1}{2})} = \frac{p_1^2 \pi_1}{2} - \frac{p_2 \pi_1}{2} - p_1\pi_2 + \pi_3.
\end{align*}
\subsubsection*{Level 3}
There are five super Macdonald polynomials, three are bosonic and two are fermionic;
\begin{equation}\label{level3}
\mathcal{M}_{(\frac{5}{2},\frac{1}{2})} = \frac{1-q}{1-qt} \pi_3 \pi_1 + \frac{q(1-t)}{1-qt} p_1 \pi_2 \pi_1, \qquad
\mathcal{M}_{(\frac{3}{2} ,1, \frac{1}{2})} = p_1 \pi_2 \pi_1 - \pi_3 \pi_ 1.
\end{equation}
\begin{align*}
\mathcal{M}_{(3)} &= \frac{(1-q)(1-q^2)(1+t+t^2)}{3(1-qt)(1-q^2t)} p_3
+ \frac{(1-q^3)(1-t^2)}{2(1-qt)(1-q^2t)} p_2p_1 \\
& \qquad + \frac{(1+q)(1+q+q^2)(1-t)^2}{6(1-qt)(1-q^2t)} p_1^3,
\\
\mathcal{M}_{(2,1)} &= - \frac{(1-q)(1+t+t^2)}{3(1-qt^2)} p_3 
+ \frac{(t-q)(1+t)}{2(1-qt^2)} p_2p_1 + \frac{(1-t)(2+q+t+2qt)}{6(1-qt^2)}p_1^3,
\\
\mathcal{M}_{(1,1,1)} &= \frac{1}{3} p_3 -\frac{1}{2}p_2 p_1 + \frac{1}{6} p_1^3.
\end{align*}

\subsubsection*{Level $\frac{7}{2}$}
There are seven super Macdonald polynomials. They are all fermionic;
\begin{align*}
\mathcal{M}_{(\frac{7}{2})} &= \frac{(1-q)^2(1-q^3)(1+q)}{(1-qt)(1-q^2t)(1-q^3t)} \pi_4
+ \frac{q^3(1-q)(1-q^2)(1-t^3)}{3(1-qt)(1-q^2t)(1-q^3t)} \pi_1 p_3 \\
& \quad + \frac{q(1-q)(1-q^3)(1+q)(1-t)}{(1-qt)(1-q^2t)(1-q^3t)} \pi_3 p_1
+ \frac{q^2(1-q)(1-q^3)(1-t^2)}{2(1-qt)(1-q^2t)(1-q^3t)}\pi_2 p_2 \\
& \qquad + \frac{q^3(1-q^3)(1-t)(1-t^2)}{2(1-qt)(1-q^2t)(1-q^3t)} \pi_1 p_2 p_1
+ \frac{q^2(1-q^3)(1+q)(1-t)^2}{2(1-qt)(1-q^2t)(1-q^3t)} \pi_2 p_1^2 \\
& \qquad +\frac{q^3(1+q)(1+q+q^2)(1-t)^3}{6(1-qt)(1-q^2t)(1-q^3t)} \pi_1 p_1^3, \\
\mathcal{M}_{(3,\frac{1}{2})} &= - \frac{(1-q)^2(1+q)}{(1-qt)(1-q^2t)} \pi_4
+ \frac{(1-q)^2(1+q)(1+t+t^2)}{3(1-qt)(1-q^2t)} \pi_1 p_3 \\
& \qquad - \frac{q(1-q)(1+q)(1-t)}{(1-qt)(1-q^2t)} \pi_3 p_1 
- \frac{q^2(1-q)(1-t^2)}{2(1-qt)(1-q^2t)} \pi_2 p_2 \\
& \qquad + \frac{(1-q^3)(1-t^2)}{2(1-qt)(1-q^2t)} \pi_1 p_2 p_1 
 - \frac{q^2(1+q)(1-t)^2}{2(1-qt)(1-q^2t)} \pi_2 p_1^2 \\
& \qquad \qquad + \frac{(1+q)(1+q+q^2)(1-t)^2}{6(1-qt)(1-q^2t)} \pi_1 p_1^3,
\end{align*}
\begin{align*}
\mathcal{M}_{(\frac{5}{2},1)} &= - \frac{(1-q)^2(1+q)}{(1-qt)^2(1+qt)} \pi_4 
- \frac{q^2(1-q)(1-t^3)}{3(1-qt)^2(1+qt)} \pi_1 p_3 
+ \frac{(1-q)(1-q-q^2+qt)}{(1-qt)^2(1+qt)}\pi_3 p_1 \\
& \qquad - \frac{q^2(1-q)(1-t)(1+t)}{2(1-qt)^2(1+qt)} \pi_2 p_2 
+ \frac{q^2(t-q)(1-t)(1+t)}{2(1-qt)^2(1+qt)}\pi_1 p_2 p_1  \\
& \qquad + \frac{q(1-q)(1-t)(2+q+qt)}{2(1-qt)^2(1+qt)} \pi_2 p_1^2 
+\frac{q^2(1-t)^2(2+q+t+2qt)}{6(1-qt)^2(1+qt)} \pi_1 p_1^3, \\
\mathcal{M}_{(2,\frac{3}{2})} &= \frac{(1-q)(q-t)}{(1-qt)(1-qt^2)} \pi_4
- \frac{q(1-q)(1-t^3)}{3(1-qt)(1-qt^2)} \pi_1 p_3 \\
& \qquad - \frac{(1-q)(1-t)(1+q+qt)}{(1-qt)(1-qt^2)} \pi_3 p_1 
+ \frac{(1+t)(1-q+q^2-qt-q^2t+q^2t^2)}{2(1-qt)(1-qt^2)} \pi_2 p_2 \\
& \qquad - \frac{q(q-t)(1-t)(1+t)}{2(1-qt)(1-qt^2)} \pi_1 p_2 p_1
 + \frac{(1-t)(1-q-q^2+qt-q^2t + q^2t^2)}{2(1-qt)(1-qt^2)} \pi_2 p_1^2 \\
& \qquad + \frac{q(1-t)^2(2+q+t+2qt)}{6(1-qt)(1-qt^2)} \pi_1 p_1^3, \\
\mathcal{M}_{(2,1,\frac{1}{2})} &= \frac{(1-q)(1+t)}{1-qt^2} \pi_4 
- \frac{(1-q)(1+t+t^2)}{3(1-qt^2)} \pi_1 p_3
+ \frac{q-t+qt-qt^2}{1-qt^2} \pi_3 p_1 \\
& \qquad - \frac{(1-q)(1+t)}{2(1-qt^2)} \pi_2 p_2 - \frac{(q-t)(1+t)}{2(1-qt^2)} \pi_1 p_2p_1
- \frac{(1-t)(1+q+2qt)}{2(1-qt^2)} \pi_2 p_1^2 \\
& \qquad + \frac{(1-t)(2+q+t+2qt)}{6(1-qt^2)} \pi_1 p_1^3, \\
\mathcal{M}_{(\frac{3}{2},1,1)} &= \frac{1-q}{1-qt^3}\pi_4 + \frac{q(1-t^3)}{3(1-qt^3)} \pi_ 1 p_3
- \frac{1-q}{1-qt^3} \pi_3 p_1 - \frac{1-q}{2(1-qt^3)} \pi_2 p_2 \\
& \qquad - \frac{q(1-t^3)}{2(1-qt^3)} \pi_1 p_2 p_1 + \frac{1-q}{2(1-qt^3)} \pi_2 p_1^2 
+ \frac{q(1-t^3)}{6(1-qt^3)} \pi_ 1 p_1^3,
\\
\mathcal{M}_{(1,1,1,\frac{1}{2})} &= -\pi_4 + \frac{1}{3}p_3 \pi_1 + \pi_3 p_1 + \frac{1}{2}\pi_2 p_2
- \frac{1}{2}\pi_1 p_2p_1 -\frac{1}{2}\pi_2 p_1^2 + \frac{1}{6} \pi_1 p_1^3.
\end{align*}

\subsubsection*{Level 4}
There are ten super Macdonald polynomials. Five are bosonic and five are fermionic.
\begin{align*}
\mathcal{M}_{(4)} &= \frac{(1-q)^3(1+q)(1+q+q^2)(1+t)(1+t^2)}{4(1-qt)(1-q^2t)(1-q^3t)} p_4 \\
& \qquad + \frac{(1-q)^2(1+q)^2(1+q^2)(1-t)(1+t+t^2)}{3(1-qt)(1-q^2t)(1-q^3t)} p_3 p_1 \\
& \qquad + \frac{(1-q)^2(1+q)(1+q^2)(1+q+q^2)(1-t)(1+t)^2}{8(1-qt)(1-q^2t)(1-q^3t)} p_2^2 \\
& \qquad + \frac{(1-q)(1+q)(1+q^2)(1+q+q^2)(1-t)^2(1+t)}{4(1-qt)(1-q^2t)(1-q^3t)} p_2 p_1^2 \\
& \qquad +\frac{(1+q)^2(1+q^2)(1+q+q^2)(1-t)^3}{24 (1-qt)(1-q^2t)(1-q^3t)} p_1^4, 
\\
\mathcal{M}_{(3,1)} &= - \frac{(1-q)^2(1+q)(1+t)(1+t^2)}{4(1-qt)^2 (1+qt)} p_4
- \frac{(1-q)(1+q)(q-t)(1+t+t^2)}{3(1-qt)^2 (1+qt)} p_3 p_1 \\
& \qquad - \frac{(1-q)(1+q^2)(1-t)(1+t)^2}{8(1-qt)^2 (1+qt)} p_2^2 \\
& \qquad + \frac{(1-t)(1+t)(1-q-q^2-q^3 +t +tq + q^2t - q^3t)}{4(1-qt)^2 (1+qt)} p_2 p_1^2\\
& \qquad + \frac{(1+q)(1-t)^2(3+2q+q^2+t+2qt+3q^2t)}{24(1-qt)^2 (1+qt)} p_1^4,
\end{align*}
\begin{align*}
\mathcal{M}_{(2,2)} &= -\frac{(1-q)(q-t)(1+t^2)}{4(1-qt)(1-qt^2)} p_4 
- \frac{(1-q)(1+q)(1-t)(1+t+t^2)}{3(1-qt)(1-qt^2)} p_3 p_1 \\
& \qquad + \frac{(1+t)(2-q+q^2-t-2qt-q^2t+t^2-qt^2+2q^2t^2)}{8(1-qt)(1-qt^2)} p_2^2 \\
& \qquad - \frac{(1+q)(q-t)(1-t)(1+t)}{4(1-qt)(1-qt^2)} p_2 p_1^2 
+ \frac{(1+q)(1-t)^2(2+q+t+2qt)}{24(1-qt)(1-qt^2)} p_1^4, 
\\
\mathcal{M}_{(2,1,1)} &= \frac{(1-q)(1+t)(1+t^2)}{4(1-qt^3)} p_4 
+ \frac{(q-t)(1+t+t^2)}{3(1-qt^3)} p_3 p_1 \\
& \qquad - \frac{(1-q)(1+t)(1+t^2)}{8(1-qt^3)} p_2^2 
- \frac{1+q-t+qt-t^2+qt^2-t^3-qt^3}{4(1-qt^3)} p_2 p_1^2 \\
& \qquad + \frac{(1-t)(3+q+2t+2qt+t^2+3qt^2)}{24(1-qt^3)} p_1^4,
\\
\mathcal{M}_{(1,1,1,1)} &= -\frac{1}{4} p_4 +\frac{1}{3}p_3 p_1 + \frac{1}{8}p_2^2 
- \frac{1}{4} p_2 p_1^2 + \frac{1}{24}p_1^4.
\end{align*}
\begin{align*}
\mathcal{M}_{(\frac{7}{2},\frac{1}{2})} &= \frac{(1-q)^2(1+q)}{(1-qt)(1-q^2t)}\pi_4 \pi_1
+\frac{q(1-q)(1+q)(1-t)}{(1-qt)(1-q^2t)} p_1 \pi_3 \pi_1 \\
& \qquad +\frac{q^2(1-q)(1-t)(1+t)}{2(1-qt)(1-q^2t)} p_2 \pi_2 \pi_1
+\frac{q^2(1+q)(1-t)^2}{2(1-qt)(1-q^2t)} p_1^2 \pi_2 \pi_1,
\\
\mathcal{M}_{(\frac{5}{2},\frac{3}{2})} &= - \frac{q(1-q)(1-t)}{(1-qt)^2(1+qt)} \pi_4 \pi_1
+ \frac{(1-q)(1-q^2t)}{(1-qt)^2(1+qt)} \pi_3 \pi_2 \\
& \qquad + \frac{q(1-q)(1-t)}{(1-qt)^2} p_1 \pi_3 \pi_1 - \frac{q^2(1-t)(1+t)}{2(1-qt)(1+qt)} p_2 \pi_2 \pi_1
+ \frac{q^2(1-t)^2}{2(1-qt)^2} p_1^2 \pi_2 \pi_1,
\\
\mathcal{M}_{(\frac{5}{2}, 1,\frac{1}{2})} &= - \frac{1-q}{1-qt}\pi_4 \pi_1 - \frac{1-q}{1-qt} \pi_3 \pi_2 
+ \frac{1-2q+qt}{1-qt} p_1 \pi_3 \pi_1 + \frac{q(1-t)}{1-qt} p_1^2 \pi_2 \pi_1,
\\
\mathcal{M}_{(2,\frac{3}{2},\frac{1}{2})} &= - \frac{t(1-q)}{1-qt^2} \pi_4 \pi_1 + \pi_3 \pi_2
- \frac{(1-t)(1+qt)}{1-qt^2} p_1 \pi_3 \pi_1 \\
& \qquad + \frac{(1+t)(1-qt)}{2(1-qt^2)} p_2 \pi_2 \pi_1 
+ \frac{(1-t)(1+qt)}{2(1-qt^2)} p_1^2 \pi_2 \pi_1,
\\
\mathcal{M}_{(\frac{3}{2},1,1,\frac{1}{2})} &=  \pi_4 \pi_1 
- p_1 \pi_3 \pi_1 - \frac{1}{2} p_2 \pi_2 \pi_1 + \frac{1}{2} p_1^2 \pi_2 \pi_1 .
\end{align*}

%%%%%%%%%%%%%%%%%%%

\section{Pieri rule of the higher modes}
\label{App:Higher-modes}

\subsection{$E_{1,n}$}

Let us look at the Pieri rule of $E_{1,n}$. On $\mathcal{M}_{(1)}=p_1$ the Pieri rule tells\footnote{For simplicity 
we put $u=1$.}
\begin{align}
E_{1,n}\cdot \mathcal{M}_{(1)} &= q^n \mathcal{M}_{(\frac{3}{2})} + t^{-n}\frac{1-q}{1-qt}\mathcal{M}_{(1, \frac{1}{2})} \CR
&= \frac{q^{n+1}(1-t)}{1-qt}p_1 \pi_1 + \frac{q^n(1-q)}{1-qt}\pi_2 + \frac{t^{-n}(1-q)}{1-qt}(p_1 \pi_1 - \pi_2).
\end{align}
Assuming 
\begin{equation}\label{E1n-ansatz}
E_{1,n} = \left( 1 + \alpha_n p_1 \frac{\partial}{\partial p_1} \right) \pi_1 + \beta_n \pi_2 \frac{\partial}{\partial p_1} + \cdots.
\end{equation}
we obtain
\begin{align}
1+ \alpha_n & =\frac{q^n-t^{-n}}{q^{-1} - t} + \frac{q^{n+1}- t^{-n-1}}{q-t^{-1}},
\\
\beta_n &= (q^{-1}-1)\frac{q^n-t^{-n}}{q^{-1}-t}.
\end{align}
When $n=0,-1$, we recover
\begin{equation}
E_{1,0} = \pi_ 1, \qquad E_{1,-1} = \pi_1 + (q^{-1}-1) \pi_2 \frac{\partial}{\partial p_1} + \cdots .
\end{equation}

In particular up to level 2, we have
\begin{equation}\label{E11-level2}
E_{1,+1} = u \left( \pi_1 + (q-1)(1-t^{-1}) p_1 \frac{\partial}{\partial p_1} \pi_1 - t^{-1}(1-q)\pi_2 \frac{\partial}{\partial p_1} + \cdots \right). 
\end{equation}
We need the additional term such as 
\begin{equation}
\gamma_n \pi_2 \pi_1 \frac{\partial}{\partial \pi_1} \frac{\partial}{\partial p_1}.
\end{equation}

\subsection{$F_{1,n}$}

The Pieri coefficients for $F_{1,n}$ is 
\begin{equation}
\widetilde{\psi}_k^{(1,n)}(q,t) 
=  (-1)^{F(k)} u^{n-1} \frac{t^{(n-1)(1-k)}q^{n(\lambda_k-1)}(1-t)}{1-q^{\lambda_k-1}t^{\ell(\lambda)-k+1}}
\prod_{i=k+1}^{\ell(\lambda)} \frac{1 - q^{\lambda_k -\lambda_i -1 + \barsigma_i}t^{i-k+1}}
{1 - q^{\lambda_k -\lambda_i -1 + \barsigma_i}t^{i-k}},
\end{equation}
which implies
\begin{align}
\label{F1n-1}
F_{1,n} \cdot \mathcal{M}_{(\frac{1}{2})} 
&= u^{n-1}\mathcal{M}_{(\varnothing)}, \\
\label{F1n-2}
F_{1,n} \cdot \mathcal{M}_{(\frac{3}{2})} 
&= u^{n-1}q^n\frac{1-t}{1-qt}\mathcal{M}_{(1)}, \\
\label{F1n-3}
F_{1,n} \cdot \mathcal{M}_{(1,\frac{1}{2})} 
&= u^{n-1} t^{1-n} \mathcal{M}_{(1)}.
\end{align}
Taking \eqref{F1n-1} into account, we may assume
\begin{equation}\label{F1n-ansatz}
F_{1, n} = u^{n-1} \left( \frac{\partial}{\partial \pi_1} + \alpha_n p_1 \frac{\partial}{\partial \pi_2} 
+ \beta_n p_1 \frac{\partial}{\partial p_1} \frac{\partial}{\partial \pi_1} + \cdots \right).
\end{equation}
Then from \eqref{F1n-2} and \eqref{F1n-3} we obtain, for $n>1$
\begin{align}
\beta_n &= (q^{n-1}-1) + (1-q)\sum_{k=0}^{n-2}q^kt^{k-n+1}, \CR
\alpha_n &= q (t-1)\sum_{k=0}^{n-2}q^kt^{k-n+1} = (1-t)\frac{q^{n-1}-t^{1-n}}{q^{-1}-t}.
\end{align}
Note that when $n=0,1$, $\alpha_n=(t-1)$ and $\alpha_n=0$, respectively. 

The Pieri rule at the next level is
\begin{align}
F_{1,n} \cdot \mathcal{M}_{(\frac{3}{2}, \frac{1}{2})}
&= u^{n-1} \left( \frac{q^n(1-t)}{1-qt} \mathcal{M}_{(1,\frac{1}{2})} - t^{1-n}\mathcal{M}_{(\frac{3}{2})} \right) \CR
&= u^{n-1} \left( \frac{q(1-t)(q^{n-1}-t^{1-n})}{1-qt} p_1\pi_1 - \frac{q^n(1-t)+t^{1-n}(1-q)}{1-qt} \pi_2 \right).
\end{align}
On the other hand \eqref{F1n-ansatz} implies
\begin{equation}
F_{1,n} \cdot \mathcal{M}_{(\frac{3}{2}, \frac{1}{2})}
= -\pi_2 + \alpha_n p_1 \pi_1 + \cdots.
\end{equation}
Hence, for the matching of the $\pi_2$-term, we need a cubic term 
$- \beta_n \pi_2 \frac{\partial}{\partial \pi_1}\frac{\partial}{\partial \pi_2}$ in fermions. 
Note that when $n=0, 1$, we have $\beta_n = \gamma_n=0$. 

\subsection{$E_{2,n}$}

The Pieri formula for $E_{2,n}$ tells
\begin{align}
E_{2,n} \cdot \mathcal{M}_{(\frac{1}{2})} &= (u(t/q)^{\frac{1}{2}})^n (q/t)^n (1-t) \mathcal{M}_{(1)}, 
\label{E2nPieri-1}\\
E_{2,n} \cdot \mathcal{M}_{(\frac{3}{2})} &= (u(t/q)^{\frac{1}{2}})^n (q^2/t)^n (1-t) \mathcal{M}_{(2)}, 
\label{E2nPieri-2}\\
E_{2,n} \cdot \mathcal{M}_{(1, \frac{1}{2})} &= (u(t/q)^{\frac{1}{2}})^n (q/t^2)^n (1-t^2) \mathcal{M}_{(1,1)}.
\label{E2nPieri-3}
\end{align}
From \eqref{E2nPieri-1} we may assume
\begin{align}\label{E2n-ansatz}
& (1-t)^{-1}(t/q)^n (u(t/q)^{\frac{1}{2}})^{-n} E_{2,n} \CR
&= p_1 \frac{\partial}{\partial \pi_1} 
+ (\alpha_n p_1^2 + \beta_n p_2) \frac{\partial}{\partial \pi_2} 
+ (\gamma_n p_1^2 + \delta_n p_2) \frac{\partial}{\partial p_1}\frac{\partial}{\partial \pi_1} + \cdots. 
\end{align}
The degree counting allows the term proportional to $\pi_2 \pi_1$. But it is clear that $E_{2,n}$ never creates such term. 
Then the equations \eqref{E2nPieri-2} and \eqref{E2nPieri-3} imply that
\begin{align}
1+\gamma_n &= \frac{1}{2}(q^n + t^{-n}) + \frac{1}{2}\frac{q^n - t^{-n}}{q^{-1}-t} + \frac{1}{2}\frac{q^n - t^{-n}}{q-t^{-1}},
\\
\delta_n &=\frac{1}{2}(q^n - t^{-n}) - \frac{1}{2}\frac{q^n - t^{-n}}{q^{-1}-t} - \frac{1}{2}\frac{q^n - t^{-n}}{q-t^{-1}},
\end{align}
and 
\begin{align}
\beta_n &= \frac{1}{2}(q^n + t^{1-n}) - \frac{1}{2}\frac{q^n - t^{-n}}{q^{-1}-t} - \frac{1}{2}\frac{q^n - t^{-n}}{q-t^{-1}},
\\
\alpha_n &= \frac{1}{2}(q^n - t^{1-n}) + \frac{1}{2}\frac{q^n - t^{-n}}{q^{-1}-t} + \frac{1}{2}\frac{q^n - t^{-n}}{q-t^{-1}}.
\end{align}
In particular we have
\begin{align}
\alpha_{1} &= \frac{q}{2}(1-t^{-1}),
\qquad
\beta_{1} = \frac{q}{2}(1+t^{-1}),
\\
\gamma_{1} &= \frac{1}{2}(q-1)(1-t^{-1}),
\qquad
\delta_{1} = \frac{1}{2}(q-1)(1+t^{-1}).
\end{align}
Hence,
\begin{align}
(1-t)^{-1} u^{-1} (t/q)^{\frac{1}{2}} E_{2,1}
&= p_1 \frac{\partial}{\partial \pi_1} 
+ \frac{q}{2}((1-t^{-1}) p_1^2 + (1+t^{-1}) p_2) \frac{\partial}{\partial \pi_2} \CR
& \qquad \qquad 
+ \frac{q-1}{2}((1-t^{-1}) p_1^2 + (1+t^{-1}) p_2) \frac{\partial}{\partial p_1}\frac{\partial}{\partial \pi_1} + \cdots. 
\label{E21-expansion}
\end{align}

Similarly we obtain
\begin{align}
\alpha_{-1} &= \frac{1}{2}(1-t)(q^{-1}+t+1) ,
\qquad
\beta_{-1} = \frac{1}{2}(1+t)(q^{-1}+t-1),
\\
\gamma_{-1} &= \frac{1}{2}(1-t)(q^{-1}-1),
\qquad
\delta_{-1} = \frac{1}{2}(1+t)(q^{-1}-1).
\end{align}
Hence,
\begin{align}
&(1-t)^{-1}(u(q/t)^{\frac{1}{2}}E_{2,-1}) \CR
&= p_1\frac{\partial}{\partial \pi_1} + (1-t) p_1^2 \frac{\partial}{\partial \pi_2} 
+ \frac{1}{2} \left( (1-t) p_1^2 + (1+t) p_2  \right) 
\left( (q^{-1} +t -1) \frac{\partial}{\partial \pi_2} + (q^{-1}-1)\frac{\partial}{\partial p_1}\frac{\partial}{\partial \pi_1}\right),
%&= p_1\frac{\partial}{\partial \pi_1} + \frac{1}{2} \left((1-t)(q^{-1}+t+1) p_1^2 + (1+t)(q^{-1}+t-1)p_2 \right)\frac{\partial}{\partial \pi_2}  \CR
%& \qquad + \frac{1}{2} \left((q^{-1}-1)(1-t)p_1^2 + (q^{-1}-1)(1+t)p_2 \right)\frac{\partial}{\partial p_1}\frac{\partial}{\partial \pi_1}.
\end{align}
which exactly agrees with
\begin{align}
u \left[ H_{1,-1}, E_{2,0} \right] &= (-u) (q^{\frac{1}{2}} - q^{-\frac{1}{2}})
 (t^{\frac{1}{2}} - t^{-\frac{1}{2}}) E_{2,-1} \CR
 &=(1-q^{-1})(1-t) (u(q/t)^{\frac{1}{2}}E_{2,-1}).
\end{align}

\subsection{$F_{2,n}$}

The Pieri coefficients for $F_{2,n}$ is 
\begin{align}
\widetilde{\psi}_k^{(2,n)}(q,t) 
&=  (-1)^{F(k)+1} u^{n+1} t^{(n+1)(\frac{1}{2}-k)} q^{n\lambda_k-\frac{1}{2}(n+1)} \CR
& \qquad \times (1-  q^{\lambda_k} t^{-k+\ell(\lambda)}) 
\prod_{i=k+1}^{\ell(\lambda)} \frac{1 - q^{\lambda_k - \lambda_i}t^{i-k-1}}{1 - q^{\lambda_k -\lambda_i}t^{i-k}},
\end{align}
which implies
\begin{align}
F_{2,n} \cdot \mathcal{M}_{(1)} &= u^{n+1}t^{-\frac{1}{2}(n+1)} q^{\frac{1}{2}(n-1)} 
(q-1) \mathcal{M}_{(\frac{1}{2})},
\\
F_{2,n} \cdot \mathcal{M}_{(2)} &= u^{n+1} t^{-\frac{1}{2}(n+1)} q^{\frac{1}{2}(3n-1)} 
(q^2-1) \mathcal{M}_{(\frac{3}{2})},
\\
F_{2,n} \cdot \mathcal{M}_{(1,1)} &= u^{n+1}t^{-\frac{3}{2}(n+1)} q^{\frac{1}{2}(n-1)} 
(q-1) \mathcal{M}_{(1,\frac{1}{2})}.
\end{align}
We can assume that
\begin{align}
F_{2,n} &= u^{n+1}t^{-\frac{1}{2}(n+1)} q^{\frac{1}{2}(n-1)}(q-1)
\left(\pi_1\frac{\partial}{\partial p_1} + \pi_2 
\left( 2\alpha_n\frac{\partial}{\partial p_2} + \beta_n \frac{\partial^2}{\partial p_1^2} \right) \right. \CR
& \qquad \qquad \left. +\pi_1 p_1 \left( 2\gamma_n\frac{\partial}{\partial p_2} + \delta_n \frac{\partial^2}{\partial p_1^2} \right) 
+ \cdots \right).
\end{align}
The second and the third conditions imply
\begin{align}
\alpha_n &= \frac{1}{2}t^{-n-1} + \frac{1}{2} \left(\frac{q^{-1}(q^n - t^{-n})}{q^{-1}-t} - \frac{q^{n+1} - t^{-n-1}}{q^{-1}-t} \right), 
\\
\beta_n &= \frac{1}{2}(q^{-1}-1) \left( \frac{q^n - t^{-n}}{q^{-1} - t} +  \frac{q^{n+1} - t^{-n-1}}{q^{-1} - t} \right),
\\
\gamma_n &= \frac{1}{2}(1+q^{-1})\left( \frac{q^{n+1} - t^{-n-1}}{q^{-1} -t } \right), 
\\
1 + \delta_n &= \frac{1}{2} t^{-n-1} + \frac{1}{2}\left( \frac{q^n - t^{-n}}{q^{-1} -t} +  \frac{q^{n+1} - t^{-n-1}}{q^{-1}-t} \right). 
\end{align}
We have
\begin{equation}
\alpha_{-1} = \frac{1}{2}(1+q^{-1}), \qquad \beta_{-1} = \frac{1}{2}(1-q^{-1}), \qquad
\gamma_{-1} = \delta_{-1} =0,
\end{equation}
which is consistent with the formula of $F_{2,-1}$. 
On the other hand, we have
\begin{align}
\alpha_{0} &= \frac{1}{2}t^{-1}(1+q), \qquad \beta_{0} = \frac{1}{2}t^{-1}(q-1), 
\\
\gamma_{0} &= - \frac{1}{2}t^{-1}(1+q), \qquad \delta_{0} = \frac{1}{2}t^{-1}(1-q) -1.
\end{align}
Hence we obtain
\begin{align}\label{F20}
F_{2,0} &= u t^{-\frac{1}{2}} q^{\frac{1}{2}}(1-q^{-1})
\left(\pi_1\frac{\partial}{\partial p_1} + 
t^{-1} \pi_2 \left( (q+1) \frac{\partial}{\partial p_2} + \frac{1}{2}(q-1) \frac{\partial^2}{\partial p_1^2} \right) \right. \CR
& \qquad \qquad \left. - t^{-1} \pi_1 p_1 \left( (q+1) \frac{\partial}{\partial p_2} 
+ (\frac{1}{2}(q-1) +t) \frac{\partial^2}{\partial p_1^2} \right) 
+ \cdots \right).
\end{align}
As a consistency check we compute
\begin{align}
F_{2,0} \cdot \mathcal{M}_{(1,1)} & = u t^{-\frac{1}{2}} q^{\frac{1}{2}} (1-q^{-1}) \CR
& \times \left( \pi_1 p_1 + t^{-1}\frac{\pi_2}{2} (-(q+1)+(q-1)) \right. \CR
& \qquad \qquad \left. -t^{-1} \frac{\pi_1 p_1}{2}  (-(q+1)+(q-1) +2t)  \right) \CR
& = u t^{-\frac{1}{2}} q^{\frac{1}{2}} (1-q^{-1})t^{-1}(\pi_1 p_1 - \pi_2),
\end{align}
which shows the role of the last term in \eqref{F20} which is missing in $F_{2,-1}$. 

%%%%%%%%%%%%%%%%%%%%%%%%%%%%%%%%%%%%%%%%%%%%%%%%%%%%%%%%%%%%%%%%%%%%%%%%%%%

\section{Integral formula for $H_{1, +1}$ and $H_{2, +1}$}
\label{App:Integral-formula}

In \cite{Galakhov:2025phf},the following integral formula;\footnote{See the dictionary \eqref{R-dictionary+}. 
We have changed the notation $\theta_k$ in \cite{Galakhov:2025phf} to $\pi_k$.}
for the Hamiltonians $H_{1, +1}$ and $H_{2, +1}$ was obtained;
\begin{align}\label{H_2+-formula}
-u^{-1}(q/t)^{\frac{1}{2}} H_{2,+1} &= \oint \frac{dw}{w} V_B^{(-)}(w) V_B^{(+)}(w) 
\langle \varnothing \vert V_F^{(-)}(w) V_F^{(+)}(w) \vert \varnothing \rangle_{F}, \\
\label{H_1+-formula}
u^{-1}H_{1,+1} &= \oint \frac{dw}{w} V_B^{(-)}(w) V_B^{(+)}(w) 
\langle \varnothing \vert \widetilde{V}_F^{(-)}(w) \widetilde{V}_F^{(+)}(w) \vert \varnothing \rangle_{F},
\end{align}
where the bosonic vertex operator
\begin{equation}
V_B^{(-)}(w) := \exp \left( \sum_{k=1}^\infty \frac{1-t^{-k}}{k} p_k w^{2k} \right), 
\quad 
V_B^{(+)}(w) := \exp \left( \sum_{k=1}^\infty  (q^k -1) \frac{\partial}{\partial p_k} w^{-2k} \right),
\end{equation}
is the same as what was employed in the free field realization of the Ruijsenaars-Macdonald Hamiltonian \cite{Awata:1994xd},\cite{Awata:1995zk}. 
The fermionic vertex operators $V_F^{(\pm)}(w)$ and $\widetilde{V}_F^{(\pm)}(w)$ are 
defined by\footnote{We assume $\psi_m, \psi_m^\dagger$ and $\pi_k$ are anti-commuting and make the products \lq\lq normal ordered\rq\rq.}
\begin{align}
 V_F^{(-)}(w) &= \exp \left(
 (1-t^{-1}) \sum_{k=1}^\infty t^{1-k} w^{2k-1} \pi_k \cdot \sum_{m=0}^{k-1}t^m \psi_m \right),
\\
 V_F^{(+)}(w)&= \exp \left(
 (q-1) \sum_{k=1}^\infty q^{k-1} w^{-2k+1}  \left(\sum_{m=0}^{k-1} q^{-m} \psi_m^\dagger \right) \frac{\partial}{\partial \pi_k} \right), 
\\
 \widetilde{V}_F^{(-)}(w) &= \exp \left(
 (1-t^{-1}) \sum_{k=2}^\infty t^{2-k} w^{2k-1} \pi_k \cdot \sum_{m=0}^{k-2}t^m \psi_m \right),
\\
 \widetilde{V}_F^{(+)}(w)&= \exp \left(
 (q-1) \sum_{k=2}^\infty q^{k-2} w^{-2k+1} \left( \sum_{m=0}^{k-2} q^{-m} \psi_m^\dagger \right) \frac{\partial}{\partial \pi_k}  \right).
\end{align} 
The vacuum expectation values are computed with respect to the charged fermions $\psi_m, \psi_m^\dagger$ with
the anti-commutation relation $\{ \psi_m, \psi_n^\dagger \} = \delta_{mn}$.  
The fermion vacuum is defined by $\psi_m \vert \varnothing \rangle_F =0$ and we have
$\langle \varnothing \vert \psi_m \psi_n^\dagger \vert \varnothing \rangle_F = \delta_{mn}$.

Note that the bosonic parts of $H_{1,+1}$ and $H_{2,+1}$ are the same. 
Expanding $V_F^{(\pm)}(w)$ in $\psi_0$ and $\psi_0^\dagger$, we have 
\begin{align}
 V_F^{(-)}(w) &= \left(1+ (1-t^{-1})\sum_{k=1}^\infty t^{1-k} w^{2k-1} \pi_k \psi_0 \right) 
 \exp \left( (1-t^{-1}) \sum_{k=2}^\infty t^{1-k} w^{2k-1} \pi_k \cdot \sum_{m=1}^{k-1}t^m \psi_m \right),
\\
 V_F^{(+)}(w)&= \left(1+  (q-1) \sum_{k=1}^\infty q^{k-1} w^{-2k+1}\psi_0^\dagger  \frac{\partial}{\partial \pi_k} \right) 
 \exp \left( (q-1) \sum_{k=2}^\infty q^{k-1} w^{-2k+1}  \left(\sum_{m=1}^{k-1} q^{-m} \psi_m^\dagger \right) 
 \frac{\partial}{\partial \pi_k} \right), 
\end{align}
Hence the vacuum expectation values of $V_F^{(\pm)}(w)$ and $\widetilde{V}_F^{(\pm)}(w)$ are related by
\begin{align}\label{H_1andH_2}
& \langle \varnothing \vert V_F^{(-)}(w) V_F^{(+)}(w) \vert \varnothing \rangle_{F} \CR
&= \left( 1+ (1-t^{-1})(q-1) \sum_{k, \ell =1}^\infty t^{1-k} q^{\ell-1} w^{2(k-\ell)} \pi_k \frac{\partial}{\partial \pi_\ell}\right) 
\langle \varnothing \vert \widetilde{V}_F^{(-)}(w) \widetilde{V}_F^{(+)}(w) \vert \varnothing \rangle_{F}.
\end{align}

Since $\widetilde{V}_F^{(\pm)}(w)$ does not involve $\pi_1$, it is easy to see
\begin{equation}
\left[ H_{1,+1}, E_{1,0} \right] = \left[ H_{1,+1}, F_{1,1} \right] =0. 
\end{equation}
Furthermore, we can also prove\footnote{Compare this Proposition with Proposition \ref{Prop;commutativity}
for $H_{i, -1}$.}
\begin{prp}\label{Prop:commutativity+}
\begin{align}
\label{H21-anticom}
&\left[ H_{2,+1}, E_{2,0} \right] = \left[ H_{2,+1}, F_{2,-1} \right] =0, \\
\label{H11-anticom}
&\left[ H_{1,+1}, E_{1,-1} \right] = \left[ H_{1,+1}, F_{1,0} \right] =0. 
\end{align}
\end{prp}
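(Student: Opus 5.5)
The plan is to treat \eqref{H21-anticom} and \eqref{H11-anticom} in parallel. In each case one computes the commutator of the integral formula \eqref{H_2+-formula} (resp.\ \eqref{H_1+-formula}) with the explicit super charges $E_{2,0}=\sum_n c_n[p]\,\partial/\partial\pi_n$, $F_{2,-1}=-\sum_n \pi_n\widetilde{c}_n[\partial/\partial p]$ (resp.\ $E_{1,-1}$, $F_{1,0}$ from section~\ref{section:Hamiltonians}). Each commutator splits into a \emph{bosonic} contribution and a \emph{fermionic} contribution, and we show that the two cancel.

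\textbf{Step 1 (bosonic part).} Since $c_n[p]$ commutes with $V_B^{(-)}(w)$, we only need the commutator with $V_B^{(+)}(w)$. The standard vertex-operator exchange gives
\begin{equation*}
V_B^{(+)}(w)\,\sum_m c_m[p]z^m=\frac{(1-z/w^2)(1-qtz/w^2)}{(1-qz/w^2)(1-tz/w^2)}\sum_m c_m[p]z^m\, V_B^{(+)}(w).
\end{equation*}
Expanding this factor in $z/w^2$ produces the coefficients that the paper writes as $t^{n-m-1}b_{n-m}(q,t)\,w^{-2(n-m)}c_m[p]$, with $b_{j}$ a difference of $q$- and $t$-powers. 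These are exactly the terms appearing in \eqref{bosonic-com}.

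For $F_{2,-1}$ the same step is done with $\widetilde{c}_n[\partial/\partial p]$ against $V_B^{(-)}(w)$. This gives the structure already used for $\widetilde{C}_{k,\ell}$ in section~\ref{section:Hamiltonians}, but with the parameters inverted.

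\textbf{Step 2 (fermionic part).} The super charge $\partial/\partial\pi_n$ (resp.\ $\pi_n$) passes through $V_F^{(-)}$ (resp.\ $V_F^{(+)}$) and produces an insertion of $(1-t^{-1})t^{1-n}w^{2n-1}\sum_{m<n}t^m\psi_m$ inside the vacuum expectation value.

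This insertion is then moved to the right through $V_F^{(+)}(w)$ using $\{\psi_m,\psi^\dagger_{m'}\}=\delta_{mm'}$, until it annihilates the vacuum. The contraction brings down
\begin{equation*}
(q-1)\sum_{k>m} q^{k-1-m}w^{-2k+1}\,\partial/\partial\pi_k
\end{equation*}
times the same expectation value. The result is a triple sum of the form appearing in \eqref{fermionic-com}, with ranges $1\le n,m$.

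\textbf{Step 3 (cancellation).} After reindexing, the bosonic triple sum reads $\sum q^{n-k}t^{k-m-1}w^{-2(n-m)}$, and the fermionic one has the same shape with the roles of $n$ and $m$ interchanged. The key point is that the summation region $\{1\le k\le n,\,1\le k\le m\}$ is symmetric in $(n,m)$. Exchanging $n\leftrightarrow m$, as in \eqref{change-sum-order}, together with the signs coming from the commutator, shows that the two contributions cancel identically. This is precisely the mechanism referred to in the proof of Lemma~\ref{lemma-EF}, and there no leftover $k=1,m=0$ term survives because the ranges start at $1$.

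\textbf{Step 4 (the other cases).} For \eqref{H11-anticom} the fermionic operators are $\widetilde V_F^{(\pm)}$, and the charges are the shifted ones $E_{1,-1}$, $F_{1,0}$, which carry $\widetilde c_{\ell-1}$ and $c_{k-1}$ and are attached to $\pi_\ell$ and $\partial/\partial\pi_k$ with the index shifted by one. Both ranges start at $2$, so the argument of Steps 1--3 goes through verbatim after shifting indices by one.

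The $F$-commutators are handled by the mirror computation, with $\widetilde c$ against $V_B^{(-)}$ and $\pi_n$ contracted through $V_F^{(+)}$. Alternatively, they can be deduced from the $E$-commutators by the $w\to w^{-1}$, $p\leftrightarrow\partial/\partial p$ type symmetry of the vertex operators.

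\textbf{Main obstacle.} I expect the main difficulty to be the sign and normal-ordering bookkeeping in Step 2, where the inserted $\psi_m$ has to be contracted past the $\pi$'s and $\partial/\partial\pi$'s inside the normal-ordered exponentials. The task is to get the exponent ranges $m\le k-1$ exactly right, so that the bosonic and fermionic regions coincide; any off-by-one in these ranges would spoil the symmetric-region argument.
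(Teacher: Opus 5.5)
Your Steps 1 and 2 produce the right two pieces, but Step 3 gets the reason for their vanishing wrong, and the cancellation you describe does not occur. Both pieces carry the same prefactor $(q-1)(1-t)$: the factor $(t^{-1}-1)t^{1-n}$ from the $\psi$-insertion equals $(1-t)t^{-n}$. After you reorder the fermionic sum as a coefficient of $\partial/\partial\pi_n$ (the analogue of \eqref{change-sum-order}), both pieces have the identical summand $c_m[p]\,t^{k-m-1}q^{n-k}w^{-2(n-m)}$ with $1\le k\le n$. They differ only in the range of $m$, and those ranges are complementary: $0\le m\le k-1$ from $[V_B^{(+)}(w),c_n[p]]$, and $m\ge k$ from the $\psi_m$--$\psi^\dagger_m$ contraction. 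So there is no relative sign. The $n\leftrightarrow m$ symmetry of the region only serves to rewrite the fermionic piece; it does not make it the negative of the bosonic one. Already in the fermion-linear sector at $n=1$ the two pieces are $(q-1)(1-t)\,C_1[p,\partial/\partial p]\,\frac{\partial}{\partial\pi_1}$ and $(q-1)(1-t)\sum_{m\ge1}t^{-m}c_m[p]\,C_{1-m}[p,\partial/\partial p]\,\frac{\partial}{\partial\pi_1}$. These are two different nonzero operators.

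What you are missing is Lemma~\ref{C-lemma}, i.e.\ the bilinear relation \eqref{bi-linear}, which in generating-function form reads
\[ \sum_{m\ge 0} c_m[p]\,(w^2/t)^m\,V_B^{(-)}(w) = 1 . \]
The two pieces add up to the complete sum over $m\ge0$, and this identity collapses it; at $n=1$ it is exactly $\sum_{m\ge0}t^{-m}c_m[p]C_{1-m}=0$. What is left is proportional to $\sum_n[[n]]_{(q,t)}\oint\frac{dw}{w}w^{-2n}V_B^{(+)}(w)\langle\varnothing\vert V_F^{(-)}(w)V_F^{(+)}(w)\vert\varnothing\rangle_F\frac{\partial}{\partial\pi_n}$. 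This vanishes in the fermion-linear sector because $V_B^{(+)}$ has only nonpositive powers of $w$.

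At higher fermion order, which the paper only asserts is `similarly vanishing', you still need the terms of nonnegative $w$-degree to drop out. For example, the coefficient of $\pi_k\frac{\partial}{\partial\pi_\ell}\frac{\partial}{\partial\pi_n}$ is $[[\ell]]_{(q,t)}[[n]]_{(q,t)}$ times a factor depending only on $k$ and $\ell+n$. It is therefore symmetric in $\ell,n$ and killed by antisymmetry. The same correction applies to your $F_{2,-1}$ computation and to Step 4.

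Note also that the surviving $k=1$, $m=0$ term in Lemma~\ref{lemma-EF}, which you cite as support, fits only this additive picture. There the fermionic range starts at $2$, so it omits the $k=1$, $m\ge1$ part of the complete sum, and the identity turns that omission into the $k=1$, $m=0$ term.
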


\begin{proof}
We prove \eqref{H21-anticom} for $H_{2,+1}$ . The relation \eqref{H11-anticom} for $H_{1,+1}$ can be proved in the same manner 
with an appropriate shift (or redefinition) of the summation indices. 
Recall that $ E_{2,0} = \displaystyle{\sum_{n=1}^\infty} c_n[p] \frac{\partial}{\partial \pi_n}$. 
Hence, 
\begin{equation}
\left[ H_{2,+1}, E_{2,0} \right] = \sum_{n=1}^\infty \left[ H_{2,+1}, c_n[p] \right]  \frac{\partial}{\partial \pi_n}
+ \sum_{n=1}^\infty  c_n[p] \left[ H_{2,+1},  \frac{\partial}{\partial \pi_n}\right].
\end{equation}
Namely the commutator consists of two contributions; one comes from the commutation relation among
the power sum variables $p_k$ and the other from the anti-commutation relation among $\pi_k$. 
Using the formula
\begin{equation}\label{c_n-commutation}
\left[ V_B^{(+)}(w), c_n[p] \right] = (q-1)(1-t) \sum_{r=0}^{n-1} [[n-r]]_{(q,t)} c_{r} [p]  w^{-2(n-r)} V_B^{(+)}(w),
\end{equation}
and the relation $[[r]]_{(q,t)} = t^{r-1} b_r(q,t)$,
the first term is evaluated as 
\begin{align}\label{1st-term}
%&(q-1)(1-t) \oint \frac{dw}{w} \sum_{n=1}^\infty 
% \left( \sum_{r=1}^n [[r]]_{(q,t)} w^{-2r} c_{n-r} [p] \right) \langle \varnothing \vert
%V_1^{(-)}(w) V_1^{(+)}(w) \vert \varnothing \rangle_{F}\frac{\partial}{\partial \pi_n} \CR
& (q-1)(1-t) \oint \frac{dw}{w} \sum_{n=1}^\infty 
 \left( \sum_{m=0}^{n-1} t^{n-m-1} b_{n-m}(q,t) w^{-2(n-m)} c_{m} [p] \right) \CR
& \qquad  \times V_B^{(-)}(w) V_B^{(+)}(w) \langle \varnothing \vert
V_F^{(-)}(w) V_F^{(+)}(w) \vert \varnothing \rangle_{F}\frac{\partial}{\partial \pi_n} \CR
&= (q-1)(1-t) \oint \frac{dw}{w} \sum_{n=1}^\infty \sum_{m=0}^{n-1} 
\sum_{\ell=1}^{n-m} q^{\ell-1} t^{n-m-\ell} w^{-2(n-m)} c_{m} [p] \CR
& \qquad  \times V_B^{(-)}(w) V_B^{(+)}(w) \langle \varnothing \vert
V_F^{(-)}(w) V_F^{(+)}(w) \vert \varnothing \rangle_{F}\frac{\partial}{\partial \pi_n} \CR
&= (q-1)(1-t) \oint \frac{dw}{w} \sum_{n=1}^\infty \sum_{k=1}^n
\sum_{m=0}^{k-1} q^{n-k} t^{k-m-1} w^{-2(n-m)} c_{m} [p] \CR
& \qquad  \times V_B^{(-)}(w) V_B^{(+)}(w) \langle \varnothing \vert
V_F^{(-)}(w) V_F^{(+)}(w) \vert \varnothing \rangle_{F}\frac{\partial}{\partial \pi_n}.
\end{align}

On the other hand the second term is\footnote{The sign here is fixed by the normal ordering prescription
between $\psi_m, \psi^\dagger_m$ and $\pi_k$.}
\begin{align}
& (t^{-1}-1) \oint \frac{dw}{w} \sum_{n=1}^\infty c_n[p]  t^{1-n} w^{2n-1} 
 V_B^{(-)}(w) V_B^{(+)}(w)  \langle \varnothing \vert\left( \sum_{m=0}^{n-1} t^m \psi_m \right)
V_F^{(-)}(w) V_F^{(+)}(w) \vert \varnothing \rangle_{F} \CR
&= (q-1)(1- t) \oint \frac{dw}{w} \sum_{m=1}^\infty c_{m} [p]  t^{-m} w^{2m-1} 
\sum_{k=0}^{m-1} (t/q)^k \sum_{n=k+1}^\infty q^{n-1} w^{-2n+1} \CR
& \qquad \times  V_B^{(-)}(w) V_B^{(+)}(w)  \langle \varnothing \vert
V_F^{(-)}(w) V_F^{(+)}(w) \vert \varnothing \rangle_{F} \frac{\partial}{\partial \pi_n},
\end{align}
where we have used $\langle \varnothing \vert \psi_m \psi_n^\dagger \vert \varnothing \rangle_F = \delta_{mn}$.
By making the shift $k \to k+1$ we see that the coefficient of $\frac{\partial}{\partial \pi_n}$ is\footnote{If we fix $n$, the range of $k$ is bounded;
$1 \leq k+1 \leq n$ and $m$ satisfies $k+1 \leq m$.}
\begin{equation}
\sum_{k=1}^n \sum_{m=k}^\infty c_m[p] t^{k-m-1} q^{n-k} w^{-2(n-m)}.
\end{equation}
%For example lower coefficients are
%\begin{enumerate}
%\item $n=1$
%$$ \sum_{m=1}^\infty c_m[p] t^{-m} w^{2m-2} $$
%\item $n=2$
%$$ \sum_{m=1}^\infty c_m[p] t^{-m} q w^{2m-4} +  \sum_{m=2}^\infty c_m[p] t^{1-m} w^{2m-4}$$
%\item $n=3$
%$$ \sum_{m=1}^\infty c_m[p] t^{-m} q^2 w^{2m-6} +  \sum_{m=2}^\infty c_m[p] t^{1-m}q w^{2m-6}
%+ \sum_{m=3}^\infty c_m[p] t^{2-m} w^{2m-6}$$
%\end{enumerate}
%The corresponding coefficients in \eqref{1st-term} are
%\begin{enumerate}
%\item $n=1$
%$$c_0[p] w^{-2} $$
%\item $n=2$
%$$ (t+q) c_0[p] w^{-4} + c_1[p] w^{-2}$$
%\item $n=3$
%$$ (t^2 + qt+q^2) c_0[p] w^{-6}  + (t+q) c_1[p] w^{-4} + c_2[p] w^{-2}$$
%\end{enumerate}
Hence, taking the sum of two terms we have
\begin{align}
\left[ H_{2,+1}, E_{2,0} \right] &=  (q-1)(1- t) \sum_{n=1}^\infty  
 \oint \frac{dw}{w} \sum_{k=1}^n \sum_{m=0}^\infty c_m[p] t^{k-m-1} w^{-2(k-m)} \CR
& \qquad \times V_B^{(-)}(w) V_B^{(+)}(w)  \langle \varnothing \vert
V_F^{(-)}(w) V_F^{(+)}(w) \vert \varnothing \rangle_{F}  q^{n-k} \frac{\partial}{\partial \pi_n}. 
\end{align}
By Lemma \ref{C-lemma} we see the right hand side vanishes. For example in the bosonic sector where we have
$\langle \varnothing \vert V_2^{(-)}(w) V_2^{(+)}(w) \vert \varnothing \rangle_{F} =1$,  the right hand side is
\begin{equation}
 (q-1)(t^{-1}- 1) \sum_{n=1}^\infty  
 \sum_{k=1}^n \left( \sum_{m=0}^\infty c_m[p] t^{-m} C_{k-m}[p, \partial/\partial p] \right)  (t/q)^k q^{n} \frac{\partial}{\partial \pi_n} =0. 
\end{equation}
Higher order terms expanded in the fermion bi-linears $\pi_k \frac{\partial}{\partial \pi_\ell}$ are similarly vanishing. 

Since $F_{2,-1} = - \displaystyle{\sum_{n=1}^\infty}~\widetilde{c}_n [\partial/\partial p] \pi_n$, we have
\begin{equation}
\left[ H_{2,+1}, F_{2,-1} \right] = - \sum_{n=1}^\infty \left[ H_{2,1}, \widetilde{c}_n [\partial/\partial p] \right]  \pi_n 
- \sum_{n=1}^\infty  \widetilde{c}_n [\partial/\partial p]  \left[ H_{2,1},  \pi_n \right].
\end{equation}
This time we use the commutation relation 
\begin{equation}
\left[ V_B^{(-)}(w), \widetilde{c}_n [\partial/\partial p]  \right] 
= - (q^{-1}-1)(1-t^{-1}) \sum_{m=0}^{n-1} [[n-m]]_{(q^{-1},t^{-1})}  \widetilde{c}_m [\partial/\partial p] w^{2(n-m)} V_B^{(-)}(w),
\end{equation}
to obtain the first term;
\begin{align}
& (q^{-1}-1)(1-t^{-1})  \oint \frac{dw}{w} \sum_{n=1}^\infty  \sum_{m=0}^{n-1} t^{1-n+m} b_{n-m} (q^{-1},t^{-1})  w^{2(n-m)}  \widetilde{c}_m [\partial/\partial p]  \CR
& \qquad \times V_B^{(-)}(w) V_B^{(+)}(w)  \langle \varnothing \vert V_F^{(-)}(w) V_F^{(+)}(w) \vert \varnothing \rangle_{F} \pi_n.
\end{align}
The second term is
\begin{align}
& -(q-1) \sum_{n=1}^\infty \widetilde{c}_n [\partial/\partial p ] q^{n-1}   \oint \frac{dw}{w}   w^{-2n+1}  V_B^{(-)}(w) V_B^{(+)}(w)  \CR
& \qquad  \langle \varnothing \vert V_F^{(-)}(w) V_F^{(+)}(w) \sum_{m=0}^{n-1} q^{-m} \psi_{m}^\dagger  \vert \varnothing  \rangle_{F} \CR
&= (q-1)(t^{-1}-1)  \sum_{m=1}^\infty \widetilde{c}_m [\partial/\partial p ] q^{m-1}   \oint \frac{dw}{w} V_B^{(-)}(w) V_B^{(+)}(w)  \CR
& \qquad  \langle \varnothing \vert V_F^{(-)}(w) V_F^{(+)}(w) \vert \varnothing \rangle_{F} \sum_{k=0}^{m-1} (t/q)^k
\sum_{n=k+1}^\infty t^{1-n} w^{2(n-m)} \pi_n.
\end{align}
The sum of two terms is
\begin{align}
&(q-1)(1-t)  \sum_{n=1}^\infty \sum_{k=1}^n \sum_{m=0}^\infty \widetilde{c}_m [\partial/\partial p ] t^{k-m-1} \CR
& \qquad  \oint \frac{dw}{w} w^{-2(k-m)}  V_B^{(-)}(w) V_B^{(+)}(w) 
 \langle \varnothing \vert V_F^{(-)}(w) V_F^{(+)}(w) \vert \varnothing \rangle_{F} q^{n-k} \pi_n,
\end{align}
which vanishes by the same argument as before.\footnote{To be checked again.}
\end{proof}

If we expand the Hamiltonians $H_{1, +1}$ and $H_{2, +1}$ in even numbers of fermions with the Ruijsenaars-Macdonald Hamiltonian
as the initial condition, the commutativity in Prop \ref{Prop:commutativity+} gives the recursion relations among the expansion coefficients.
It is possible to write down a solution in terms of the multiple commutator of type \eqref{c_n-commutation}, which becomes
combinatorially involved in higher orders. The use of the vacuum expectation values for the auxiliary fermions 
$\psi_m, \psi_m^\dagger$ provides  a sophisticated way of representing such complicated coefficients. 

%%%%%%%%%%%%%%%%%%%%%%%%%%%%%%%%%%%%%%%%%%%%%%%%%%%%%%%%%%%%%%%%%%%%%%%%%%%%

Let us look at the fermionic terms in the formula of $(q,t)$-inverted Hamiltonian proposed by \cite{Galakhov:2025phf};
\begin{equation}
\widehat{\mathcal{H}}^{+}_i = \oint \frac{dw}{w} \langle \varnothing \vert V_i^{(-)}(w) V_i^{(+)}(w) \vert \varnothing \rangle_{F},
\qquad i=1,2.
\end{equation}
 
The total sum for $\widehat{\mathcal{H}}^{+}_2$ is 
\begin{align}
& (1-t^{-1})(q-1) \left(\pi_1 \frac{\partial}{\partial \pi_1} + (1+ t^{-1}q) \pi_2 \frac{\partial}{\partial \pi_2}
+ (1+ t^{-1}q + t^{-2}q^2) \pi_3 \frac{\partial}{\partial \pi_3} \right. \CR
& \qquad + q w^{-2} \pi_1\frac{\partial}{\partial \pi_2} + t^{-1} w^{2} \pi_2\frac{\partial}{\partial \pi_1}
+ q^2 w^{-4} \pi_1 \frac{\partial}{\partial \pi_3} + t^{-2} w^{4} \pi_3 \frac{\partial}{\partial \pi_1} \CR
& \qquad \left. + q(1+t^{-1}q) w^{-2} \pi_2\frac{\partial}{\partial \pi_3} + t^{-1}(1+ t^{-1}q) w^{2} \pi_3\frac{\partial}{\partial \pi_2}\right).
\end{align}
For $\widehat{\mathcal{H}}^{+}_1$ it is 
\begin{align}
(1-t^{-1})(q-1) t^{-1} \left( \pi_2 \frac{\partial}{\partial \pi_2} + (1+ t^{-1}q) \pi_3 \frac{\partial}{\partial \pi_3} 
+ q w^{-2} \pi_2\frac{\partial}{\partial \pi_3} + t^{-1} w^{2} \pi_3\frac{\partial}{\partial \pi_2} \right).
\end{align}

Hence, the structure of the fermionic terms in the vertex operator $V_i^{(\pm)}(w)$ leads that
\begin{equation}
b_k(q,t) := 1+ (q/t) + \cdots + (q/t)^{k-1} = t^{1-k} \frac{q^k - t^k}{q-t} = t^{1-k} [[k]]_{(q,t)}.
\end{equation}
appears in the coefficients in the expansion in $\pi_k$. 
The general form of the fermion bi-linear terms of $\widehat{\mathcal{H}}^{+}_2$ is 
\begin{align}\label{fermion-bilinear}
(1-t^{-1})(q-1) & \left( \sum_{k=1}^\infty b_k(q,t) \cdot \pi_k \frac{\partial}{\partial \pi_k}
+ \sum_{1 \leq k < \ell < \infty} b_k(q,t) q^{\ell-k} w^{2(k-\ell)} \cdot \pi_k \frac{\partial}{\partial \pi_\ell} \right. \CR
& \qquad \left. + \sum_{1 \leq \ell < k < \infty} b_\ell(q,t) t^{\ell-k} w^{2(k-\ell)}\cdot \pi_k \frac{\partial}{\partial \pi_\ell} \right),
\end{align}
For $\widehat{\mathcal{H}}^{+}_1$  we have
\begin{align}
(1-t^{-1})(q-1)t^{-1} & \left( \sum_{k=2}^\infty b_k(q,t) \cdot \pi_k \frac{\partial}{\partial \pi_k}
+ \sum_{2 \leq k < \ell < \infty} b_k(q,t) q^{\ell-k} w^{2(k-\ell)} \cdot \pi_k \frac{\partial}{\partial \pi_\ell} \right. \CR
& \qquad \left. + \sum_{2 \leq \ell < k < \infty} b_\ell(q,t) t^{\ell-k} w^{2(k-\ell)}\cdot \pi_k \frac{\partial}{\partial \pi_\ell} \right).
\end{align}
Thus according the relative magnitudes of $k$ and $\ell$ we have an additional monomial factor $q^{\ell-k}$ or  $t^{\ell-k}$. 
Combined with the bosonic part \eqref{bosonic-part}, the fermion bi-linear terms of $\widehat{\mathcal{H}}^{+}_2$  becomes 
\begin{align}\label{H2+}
(1-t^{-1})(q-1) & \left( \sum_{1 \leq k \leq \ell < \infty} b_k(q,t) q^{\ell-k} 
C_{\ell -k} [p, \partial/\partial p]\cdot \pi_k \frac{\partial}{\partial \pi_\ell} \right. \CR
& \qquad \left. + \sum_{1 \leq \ell < k < \infty} b_\ell(q,t) t^{\ell-k} C_{\ell -k} [p, \partial/\partial p]
\cdot \pi_k \frac{\partial}{\partial \pi_\ell} \right).
\end{align}
The fermion bi-linear terms of $\widehat{\mathcal{H}}^{+}_1$ are given by the shift of indices $k, \ell \to k+1, \ell+1$.

When $k=1$ the second term in \eqref{H2+} disappears;
\begin{equation}
(1-t^{-1})(q-1) \left( \sum_{1 \leq \ell < \infty} q^{\ell-1} 
C_{\ell -1} [p, \partial/\partial p] \cdot \pi_1 \frac{\partial}{\partial \pi_\ell} \right), 
\end{equation}
which agrees with \eqref{pi1-term}.
On the other hand for $k=2$ there is a contribution from the second term;
\begin{align}
(1-t^{-1})(q-1) \left( \sum_{2 \leq \ell < \infty} (1+ q/t) q^{\ell-2} 
C_{\ell -2} [p, \partial/\partial p]\cdot \pi_2 \frac{\partial}{\partial \pi_\ell} 
+ t^{-1} C_{-1} [p, \partial/\partial p]
\cdot \pi_2 \frac{\partial}{\partial \pi_1} \right).
\end{align}

%%%%%%%%%%%%%%%%%%%%%%%%%%%%

\section{Involution operator $\mathsf{T}_q$}
\label{App:Tq}

In \cite{Alarie-Vezina:2019ohz} they introduced four super charges; 
\begin{align}
\mathcal{Q}_1 &:= \sum_{i=1}^N \theta_i \tau_i^{-1}, \qquad
\mathcal{Q}_2 := \sum_{i=1}^N A_i(t^{-1}) \frac{\partial}{\partial \theta_i}, \\
\mathcal{Q}_3 &:= \sum_{i=1}^N A_i(t) \xi_i \tau_i \frac{\partial}{\partial \theta_i}, \qquad
\mathcal{Q}_4 := \sum_i \theta_i = \pi_1,
\end{align}
where $\tau_i$ is the $q$-shift operator of $x_i \to qx_i$, 
\begin{equation}
A_i(t) := \prod_{j \neq i} \frac{tx_i - x_j}{x_i - x_j},
\end{equation}
and
\begin{equation}\label{xi-op}
\xi_i := \sum_{I \subset \{1,2,\cdots N \}} \prod_{j \in I, j \neq i}
\frac{(qtx_i -x_j)(x_i-x_j)}{(qx_i -x_j)(tx_i-x_j)} \theta_I \rho_I.
\end{equation}
Since 
\begin{equation}
\left[ \mathcal{Q}_2, \mathcal{Q}_1 \right]_{+} = \sum_{i=1}^N A_i(t^{-1}) \cdot \tau_i^{-1} 
+ \sum_{i,j=1}^N \left[A_i(t^{-1}), \tau_j^{-1} \right] \theta_j \frac{\partial}{\partial \theta_i}
,
\end{equation}
the relation of the first two super charges $\mathcal{Q}_1$ and $\mathcal{Q}_2$ to the Hamiltonian of the Macdonald polynomials is clear.
On the other hand for the remaining super charges $\mathcal{Q}_3$ and $\mathcal{Q}_4$ the relation is not obvious. 
As was shown in \cite{Alarie-Vezina:2019ohz},  $\mathcal{Q}_3$ and $\mathcal{Q}_4$ are related to the $(q,t)$ inverted form 
of $\mathcal{Q}_1$ and $\mathcal{Q}_2$.\footnote{Since the Macdonald polynomials are invariant under the inversion $(q,t) \to (q^{-1}, t^{-1})$, 
they are also eigenfunctions of the $(q,t)$ inverted Hamiltonian.} 
In fact we have
\begin{lem}\label{Q-inversion}
\begin{align*}
\mathsf{T}_q \mathcal{Q}_4 &= \mathcal{Q}_1^{(q^{-1}, t^{-1})} \mathsf{T}_q
= \left( \sum_{i=1}^N \theta_i \tau_i \right) \mathsf{T}_q, \\
\mathsf{T}_q \mathcal{Q}_3 &= \mathcal{Q}_2^{(q^{-1}, t^{-1})} \mathsf{T}_q
= \left(\sum_{i=1}^N A_i(t) \frac{\partial}{\partial \theta_i}\right) \mathsf{T}_q , 
\end{align*}
where
\begin{equation}
\mathsf{T}_q  := \sum_{I \subset \{1,2, \ldots, N \}} \tau_I \theta_I \rho_I. 
\end{equation}
\end{lem}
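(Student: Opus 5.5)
The goal is Lemma \ref{Q-inversion}: conjugating by $\mathsf{T}_q$ turns $\mathcal{Q}_4$ into $\mathcal{Q}_1^{(q^{-1},t^{-1})}$ and $\mathcal{Q}_3$ into $\mathcal{Q}_2^{(q^{-1},t^{-1})}$. The plan is to verify both identities by acting on an arbitrary element $f=\sum_J \theta_J f_J(x)$ of the space of supersymmetric polynomials in $N$ variables. Here $\theta_J$ is an ordered monomial in the Grassmann variables and $\rho_I$ is the projector onto monomials containing exactly the fermions in $I$. This projector reading is an assumption, and it is the one compatible with $\mathsf{T}_q\mathcal{M}_\Lambda = q^{|\Lambda^{\mathsf a}|}\mathcal{M}_\Lambda(q^{-1},t^{-1})$. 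With it, $\mathsf{T}_q$ acts on the sector $\theta_I f_I$ by $\theta_I f_I \mapsto \theta_I \tau_I f_I$, where $\tau_I=\prod_{i\in I}\tau_i$. The operators $\theta_i$ and $\partial/\partial\theta_i$ change the fermion content by exactly one index, so each identity reduces to a statement relating adjacent sectors $I$ and $I\cup\{i\}$.

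\textbf{First identity.} Apply $\mathsf{T}_q\mathcal{Q}_4$ to $\theta_J f_J$ with $i\notin J$. The term $\theta_i\theta_J f_J$ lies in sector $J\cup\{i\}$, so $\mathsf{T}_q$ sends it to $\theta_i\theta_J\,\tau_i\tau_J f_J$. On the other side, $\theta_i\tau_i\,\mathsf{T}_q(\theta_J f_J)=\theta_i\tau_i\,\theta_J\tau_J f_J$. The $\tau$'s commute with the $\theta$'s, so the two agree term by term. This identity is essentially immediate; the only check needed is that the ordering and sign conventions of $\theta_I$ in $\mathsf{T}_q$ match those in $\theta_i\theta_J$.

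\textbf{Second identity.} Apply $\mathsf{T}_q\mathcal{Q}_3$ to $\theta_J f_J$. The operator $\partial/\partial\theta_i$ removes $i\in J$, and $\tau_i$ shifts $x_i$. Next, $\xi_i$ is sector-diagonal: it multiplies sector $I$ by $\prod_{j\in I\setminus\{i\}}\frac{(qtx_i-x_j)(x_i-x_j)}{(qx_i-x_j)(tx_i-x_j)}$. Finally $\mathsf{T}_q$ applies $\tau_{J\setminus\{i\}}$. Because $\tau_i$ was already applied before the removal, the result has the form $\tau_{J\setminus\{i\}}\bigl[A_i(t)\,\xi\text{-factor}\bigr]\,\tau_J f_J$, where the inner coefficient is taken with respect to the sector after removal.

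The right side, $A_i(t)\,\partial_{\theta_i}\,\theta_J\tau_J f_J$, carries the coefficient $A_i(t)$ evaluated at unshifted $x$. The core computation is therefore the rational-function identity
\[
\tau_{J\setminus\{i\}}\Bigl(A_i(t)\prod_{j\in J\setminus\{i\}}\frac{(qtx_i-x_j)(x_i-x_j)}{(qx_i-x_j)(tx_i-x_j)}\Bigr)=A_i(t),
\]
or whichever version the actual operator ordering produces. To check it, one applies $x_j\to qx_j$ for $j\in J\setminus\{i\}$ to the factors $(tx_i-x_j)/(x_i-x_j)$ of $A_i(t)$. The $\xi$-factor is designed so that these shifted factors telescope back to the unshifted ones, possibly with the roles of $x_i$ and $x_j$ interchanged; this is where $qt$ enters. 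The factors of $A_i(t)$ with $j\notin J$ are untouched by the shift.

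\textbf{Main obstacle.} The hard part is bookkeeping. One must determine exactly which variables each $\tau$ hits and in what order the operators compose: $\tau_i$ acts on both $A_i$ and $\xi_i$ in $\mathcal{Q}_3$, so does $\xi_i$ sit to the left or right of $\tau_i$? One must also confirm that the fermionic signs from $\theta_I\rho_I$ cancel. I would first test the case $N=2$, $J=\{1,2\}$, to fix the conventions for $\rho_I$ and the operator ordering so that the product identity holds. The general case then follows factor by factor, since both sides factorize over $j\neq i$.
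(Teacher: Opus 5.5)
Your proposal is correct and follows the paper's proof. Both act on a single sector $\theta_J$ using $\mathsf{T}_q\theta_J=\theta_J\tau_J$, obtain the first identity term by term, and reduce the second to the shift identity you display, which is exactly what the paper checks. Your hedging is unnecessary: in $\mathcal{Q}_3=\sum_i A_i(t)\xi_i\tau_i\partial_{\theta_i}$ the factor $\xi_i$ sits to the left of $\tau_i$, so $A_i(t)$ times the $\xi$-factor equals $\prod_{k\notin J}\frac{tx_i-x_k}{x_i-x_k}\prod_{j\in J\setminus\{i\}}\frac{qtx_i-x_j}{qx_i-x_j}$, and the shift $x_j\to qx_j$ for $j\in J\setminus\{i\}$ returns $A_i(t)$ directly, with no interchange of $x_i$ and $x_j$.
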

Note that $\theta_I \rho_I$ is the projection operator to the fermion sector labelled by $I$ and 
on this sector $\mathsf{T}_q: x_i \to qx_i~(i \in I),~x_j \to x_j ~(j \notin I)$. Hence, $\mathsf{T}_q$
preserves the fermion number. On the subspace of a fixed fermion number $m$, the action of $\mathsf{T}_q$ is
\begin{equation}
\mathsf{T}_q^{(m)}  := \sum_{|I| = m} \tau_I \theta_I \rho_I, \qquad \mathsf{T}_q = \sum_{m=0}^N \mathsf{T}_q^{(m)}.
\end{equation}
In particular $\mathsf{T}_q^{(0)} = \tau_\varnothing\theta_\varnothing \rho_\varnothing = 1 \cdot \rho_\varnothing$ 
is the projection to the subspace with no fermions.

\begin{proof}
It is enough to prove the Lemma on the fermionic sector $\theta_J$ for any $J \subset \{1, 2, \ldots N \}$. 
Note that $\mathsf{T}_q \theta_J = \theta_J  \tau_J$. Hence, 
\begin{equation}
\mathsf{T}_q  \mathcal{Q}_4  \theta_{J} = \mathsf{T}_q  \sum_{ i \notin J} \theta_i \theta_J
=  \sum_{ i \notin J} \tau_{J} \tau_i  \theta_i \theta_J
=  \left( \sum_{i=1}^N \theta_i \tau_i \right) \mathsf{T}_q \theta_J. 
\end{equation}
Namely the commutation with $\mathsf{T}_q$ produces the shift operator $\tau_i$. 

To prove the second relation we first look at 
\begin{align}
\mathcal{Q}_3 \theta_J = \sum_{i=1}^N \prod_{k \neq i} \frac{tx_i - x_k}{x_i -x_k}
\sum_{I \subset \{1, 2, \ldots N \}} \prod_{j \in I, j \neq i} \frac{(qt x_i -x_j)(x_i - x_j)}{(q x_i - x_j)(tx_i -x_j)}
\theta_I \rho_I \tau_i \partial_{\theta_i} \theta_J. 
\end{align}
The single term with $I = J \setminus \{i\}$ survives in the summation over the subsets $I$. 
We divide the indices $k \neq i$ into two sets $ j \in J \setminus \{i\}$ and $k \notin J$. 
Thus we have 
\begin{align}
\mathcal{Q}_3 \theta_J = \sum_{i=1}^N \prod_{k \notin J} \frac{tx_i - x_k}{x_i -x_k}
\prod_{j \in J \setminus \{i\} } \frac{qt x_i -x_j}{q x_i - x_j}\tau_i \partial_{\theta_i} \theta_J. 
\end{align}
Then we have 
\begin{align}
\mathsf{T}_q  \mathcal{Q}_3 \theta_J &=  \sum_{i=1}^N  \tau_{ J \setminus \{i\}} \prod_{k \notin J} \frac{tx_i - x_k}{x_i -x_k}
\prod_{j \in \in J \setminus \{i\} } \frac{qt x_i -x_j}{q x_i - x_j}\tau_i \partial_{\theta_i} \theta_J \CR
&=  \sum_{i=1}^N  \prod_{k \notin J} \frac{tx_i - x_k}{x_i -x_k}
\prod_{j \in \in J \setminus \{i\} } \frac{t x_i -x_j}{ x_i - x_j} \tau_J \partial_{\theta_i} \theta_J  \CR
&=  \sum_{i=1}^N  \prod_{k \neq i} \frac{tx_i - x_k}{x_i -x_k} \partial_{\theta_i} \mathsf{T}_q \theta_J.
\end{align}
Hence we obtain the second relation. 
\end{proof}

\begin{rmk}
Note that $\theta_I \rho_I$ are mutually orthogonal projections to the fermion sector labelled by $I$;
\begin{equation}
(\theta_I \rho_I)(\theta_J \rho_J) = \delta_{I,J} (\theta_I \rho_I).
\end{equation}
Hence we have $\mathsf{T}_q^{-1} = \mathsf{T}_{q^{-1}} = \sum_{I} \tau_I^{-1} \theta_I \rho_I$. 
In fact
\begin{equation}
\mathsf{T}_q \mathsf{T}_{q^{-1}} = (\sum_{I} \tau_I\theta_I \rho_I)
(\sum_{J} \tau_J^{-1} \theta_J \rho_J) = \sum_{I} \theta_I \rho_I = 1.
\end{equation}
\end{rmk}

Now we have
\begin{equation}
\mathcal{Q}_3 = \mathsf{T}_{q^{-1}} \left(\sum_{i=1}^N A_i(t) \frac{\partial}{\partial \theta_i}\right) \mathsf{T}_q,
\end{equation}
and $\mathcal{Q}_3$ decreases the fermion number by one. Hence the leading term is the action on the fermion number one subspace.
The fermionic power sum $\pi_n, (1 \leq n \leq N) $ is a basis of the fermion number one subspace 
and by using \eqref{q-shift-powersum} we have
\begin{align}
\mathcal{Q}_3 \pi_n &= \left(\sum_{i=1}^N A_i(t) \frac{\partial}{\partial \theta_i}\right) \sum_{j=1}^N
\tau_j \theta_j x_j^{n-1} \CR
&= \sum_{i=1}^N  A_i(t) \tau_i x_i^{n-1} \CR
&= \sum_{i=1}^N  \sum_{m=1}^\infty A_i(t) x_i^{n+m-1} \widetilde{c}_m^\vee[\partial/\partial p]\CR
&= \frac{t^N}{t-1}\sum_{m=1}^\infty c_{n+m-1}^\vee[p] \widetilde{c}_m^\vee[\partial/\partial p].
\end{align}
We see that on the fermion number one subspace,
\begin{equation}
\mathcal{Q}_3 = \frac{t^N}{t-1} \sum_{n=1}^N \sum_{m=1}^\infty c^\vee_{n+m-1}[p] \widetilde{c}_m^\vee[\partial/\partial p]
\frac{\partial}{\partial \pi_n},
\end{equation}
which agrees with the leading term of $F_{1,2}$ computed in section \ref{section;integral-formula}.

%%%%%%%%%%%%%%%%%%%%%%%%%%%%%%%%%%%%%%%%%%%%%%%%%%%%%%%%%%%%%%%%%%%%%%%
In \cite{Alarie-Vezina:2019ohz} (see Eq.(4.9)), it is proved that the operator $\mathsf{T}_q$ acts 
on the super Macdonald polynomials as the inversion of parameters $(q,t) \to (q^{-1}, t^{-1})$ up to some power of $q$.
\begin{equation}
\label{Tq-onMac}
\mathsf{T}_q \mathcal{M}_{\Lambda}(x, \theta;q,t) = q^{|\Lambda^{\mathsf{a}}|} \mathcal{M}_{\Lambda}(x, \theta;q^{-1},t^{-1}).
\end{equation}
To check \eqref{Tq-onMac} explicitly at lower levels we use the following lemme;
\begin{lem}\label{power-sum}
On the one fermion sector, the action of $\mathsf{T}_q$ is described as follows;
\begin{equation}
\mathsf{T}_q^{(1)} := \sum_{n=1}^\infty \sum_{k=0}^\infty \widetilde{c}_k^\vee \left[ \partial/ \partial p \right] 
\pi_{n+k}~q^{n-1}\frac{\partial}{\partial \pi_n},
\end{equation}
where
\begin{equation}
\sum_{k=0}^\infty \widetilde{c}_k^\vee \left[ \partial/ \partial p \right] z^{-k}
= \exp \left( \sum_{n=1}^\infty (q^{n}-1) \frac{\partial}{\partial p_n} z^{-n} \right).
\end{equation}
\end{lem}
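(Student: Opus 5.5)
The plan is to compute $\mathsf{T}_q^{(1)}$ directly on the basis of the one-fermion sector. By definition, $\mathsf{T}_q^{(1)}=\sum_{i=1}^N \tau_i\theta_i\rho_i$. On the one-fermion sector, every element can be written as $\sum_i \theta_i f_i(x)$, and $\theta_i\rho_i$ picks out the $\theta_i$ component. Hence $\mathsf{T}_q^{(1)}\big(\sum_i\theta_i f_i\big)=\sum_i\theta_i\,\tau_i f_i$. In the power-sum description, a typical element of this sector is $g(p)\,\pi_n$ with $g$ a polynomial in the $p_k$, and such elements span the sector (in the projective limit). It therefore suffices to evaluate $\mathsf{T}_q^{(1)}(g\,\pi_n)$ and check that it equals the right-hand side applied to $g\,\pi_n$. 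The right-hand side gives $\sum_{k\ge0}\widetilde{c}^\vee_k[\partial/\partial p]\,\pi_{n+k}\,q^{n-1}g$, with the $\widetilde{c}^\vee_k$ acting on $g$.

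Next, write $g\,\pi_n=\sum_i\theta_i x_i^{n-1}g(p)$. Then $\mathsf{T}_q^{(1)}(g\,\pi_n)=\sum_i\theta_i\,\tau_i\big(x_i^{n-1}g(p)\big)=\sum_i\theta_i\,q^{n-1}x_i^{n-1}(\tau_i g)(p)$. Now apply Lemma \ref{lem:powersum-rep}, i.e.\ \eqref{q-shift}, which gives the operator identity $\tau_i=\sum_{k\ge0}x_i^k\,\widetilde{c}^\vee_k[\partial/\partial p]$ acting on functions of the $p$'s. This is the $q\to q^{-1}$ counterpart of the corollary \eqref{q-shift-powersum}, and it is immediate from the generating functions \eqref{q-shift} and \eqref{tc-check-gen}. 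Substituting yields $\sum_i\theta_i q^{n-1}\sum_k x_i^{n-1+k}\,\widetilde{c}^\vee_k g=\sum_k q^{n-1}\pi_{n+k}\,\widetilde{c}^\vee_k g$, which is the claimed formula.

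The main point requiring care is ordering and commutation. The operators $\widetilde{c}^\vee_k$ contain only $\partial/\partial p$, so they act solely on the bosonic coefficient $g$ and not on the factor $x_i^{n-1}$ coming from $\pi_n$. This matches the normal ordering $\widetilde{c}^\vee_k\,\pi_{n+k}\,\partial/\partial\pi_n$ in the statement: $\partial/\partial\pi_n$ strips $\pi_n$, and then $\pi_{n+k}$ is multiplied in, with the order of these multiplications irrelevant. One must also justify treating $\tau_i$ as acting on $g(p)$ through \eqref{q-shift}, which is valid because $\tau_i p_m=p_m+(q^m-1)x_i^m$. Finally, the finite-$N$ expression has to be passed to the projective limit, where the $p_k,\pi_k$ become independent. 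I expect this identification, including consistency with the truncation $n\le N$, to be the only delicate step; the rest is bookkeeping.
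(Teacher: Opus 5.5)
Your proposal is correct and follows essentially the paper's own argument: $\mathsf{T}_q$ acts on the $\theta_i$-component as $\tau_i$, the factor $x_i^{n-1}$ produces $q^{n-1}$, and expanding $\tau_i=\sum_{k\ge 0}x_i^k\,\widetilde{c}^\vee_k[\partial/\partial p]$ via Lemma \ref{lem:powersum-rep} and resumming $\sum_i\theta_i x_i^{n+k-1}=\pi_{n+k}$ gives the formula. The only difference is presentation: the paper writes the computation as an operator identity on a general sector $\theta_J$, whereas you evaluate it on elements $g(p)\,\pi_n$ and also check that the construction is compatible with the projective limit, which is harmless.
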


\begin{proof}
We evaluate the left hand side on the sector $\theta_J$.
\begin{align*}
\mathsf{T}_q~\pi_n \theta_J %&= \mathsf{T}_q \left( \sum_{i \notin J} x_i^{n-1} \theta_i \right) \theta_J \\
&= q^{n-1} \left( \sum_{i \notin J} x_i^{n-1} \theta_i \right) \tau_i \mathsf{T}_q \theta_J \\
&= q^{n-1} \sum_{i \notin J} x_i^{n-1} \theta_i \left( \sum_{k=0}^\infty x_i^k 
\widetilde{c}_k^\vee [\partial/\partial p] \right)\mathsf{T}_q \theta_J \\
&= q^{n-1} \sum_{k=0}^\infty \pi_{n+k} \widetilde{c}_k^\vee [\partial/\partial p] \mathsf{T}_q \theta_J.
\end{align*}
\end{proof}

Explicit formulas following from Lemma \ref{power-sum} are
\begin{align}
\label{Tq-action-1}
\mathsf{T}_q(\pi_k p_\ell) &= q^{k-1} ( \pi_k p_\ell + (q^\ell -1) \pi_{k + \ell}), \\
\label{Tq-action-2}
\mathsf{T}_q( \pi_m p_k p_\ell) &= q^{m-1} \big(  \pi_m p_k p_\ell + (q^k-1) \pi_{k+m} p_\ell  + (q^\ell-1) \pi_{\ell+m} p_k  
+ (q^k-1) (q^\ell-1)\pi_{k+\ell+m} \big).
\end{align}

By induction we obtain 
\begin{equation}
\mathsf{T}_q =\sum_{m=0}^\infty \mathsf{T}_q^{(m)}
= \mathsf{P}^{(0)} + \mathsf{T}_q^{(1)}\mathsf{P}^{(1)} + \frac{1}{2}:\!(\mathsf{T}_q^{(1)})^2: \mathsf{P}^{(2)} + \cdots 
%\neq ~:\!\exp \left(\mathsf{T}_q^{(1)}\right)\!:,
\end{equation}
where $: \bullet :$ denotes the normal ordering.
Note that each term involves the projection $\mathsf{P}^{(m)}$ to the subspace with fermion number $m$,
which plays a  crucial role for the correct action of $\mathsf{T}_q$. 
For example
\begin{align}
\mathsf{T}_q( \pi_m \pi_k p_\ell) &= \frac{1}{2} :\left(\sum_{n=1}^\infty
\left(\pi_{n} + \sum_{k=1}^\infty \pi_{n+k} (q^k-1) \frac{\partial}{\partial p_k}
+ \cdots \right)~q^{n-1}\frac{\partial}{\partial \pi_n}\right)^2: ( \pi_m \pi_k p_\ell) \CR
&= q^{m+k-2}\pi_m \pi_k p_\ell 
+\sum_{r=1}^\infty   \sum_{s=1}^\infty \pi_r \pi_{s+j} \sum_{j=1}^\infty (q^j-1) \frac{\partial}{\partial p_j}
\left(q^{r+s-2} \frac{\partial}{\partial \pi_r} \frac{\partial}{\partial \pi_s}\right) ( \pi_m \pi_k p_\ell)\CR
&= q^{m+k-2} \big(  \pi_m \pi_k p_\ell + (q^\ell-1) (\pi_{m+\ell} \pi_k + \pi_m \pi_{k+\ell} ) \big).
\end{align}

\begin{rmk}
If we introduce the fermionic creation and annihilation operators by
\begin{align}
\psi^{(+)}(z) &= \sum_{k=1}^\infty \psi_{-k+\frac{1}{2}} z^{k-1} =  \sum_{k=1}^\infty \pi_k  z^{k-1}, \\
\psi^{(-)}(z) &= \sum_{k=1}^\infty \psi_{k-\frac{1}{2}} z^{-k} =  \sum_{k=1}^\infty \frac{\partial}{\partial \pi_k}  q^k z^{-k}.
\end{align}
we can write
\begin{equation}
\mathsf{T}_q \psi^{(+)}(z) = \psi^{(+)}(qz) 
\exp \left( \sum_{n=1}^\infty (1-q^{-n}) \frac{\partial}{\partial p_n} z^{-n} \right) \mathsf{T}_q.
\end{equation}
\end{rmk}

\begin{rmk}
In \cite{Alarie-Vezina:2019ohz} it is claimed that $\mathsf{T}_q$ is a ring endomorphism.
However, Lemma \ref{power-sum} means it is only a linear endomorphism.
\end{rmk}

We can confirm the inversion formula \eqref{Tq-onMac} by using \eqref{Tq-action-1} and \eqref{Tq-action-2}.
Up to level two there are three super Macdonald polynomials that involve the fermionic power sums;
\begin{align*}
\mathsf{T}_q(\mathcal{M}_{(1, \frac{1}{2})}) 
&= \mathsf{T}_q(p_1 \pi_1 - \pi_2) \\
&= p_1 \pi_1 +(q-1) \pi_2 - q \pi_2 = \mathcal{M}_{(1, \frac{1}{2})}, \\
\mathsf{T}_q(\mathcal{M}_{(\frac{3}{2})}) 
&= \mathsf{T}_q \left(\frac{q(1-t)}{1-qt}p_1\pi_1 + \frac{1-q}{1-qt}\pi_2 \right) \\
&= \frac{q(1-t)}{1-qt}p_1\pi_1 + \frac{q(q-1)(1-t)}{1-qt} \pi_2 + \frac{q(1-q)}{1-qt}\pi_2 \\
&= \frac{q(1-t)}{1-qt}p_1\pi_1 + \frac{tq(1-q)}{1-qt}\pi_2 \\
&= q \left(\frac{1-t}{1-qt}p_1\pi_1 + \frac{t(1-q)}{1-qt}\pi_2  \right)
= q \cdot \mathcal{M}_{(\frac{3}{2})}(q \to q^{-1}, t \to t^{-1}), \\
\mathsf{T}_q(\mathcal{M}_{(\frac{3}{2},\frac{1}{2})}) 
&= \mathsf{T}_q (\pi_2\pi_1) = q \mathcal{M}_{(\frac{3}{2},\frac{1}{2})}.
\end{align*}

At level $\frac{5}{2}$ we have four super Macdonald polynomials;
\begin{align*}
\mathsf{T}_q(\mathcal{M}_{(1,1, \frac{1}{2})}) 
&= \mathsf{T}_q \left( \frac{p_1^2 \pi_1}{2} - \frac{p_2 \pi_1}{2} - p_1\pi_2 + \pi_3 \right) \\
&= \frac{1}{2} (p_1^2 \pi_1 + 2(q-1) p_1 \pi_2+ (q-1)^2 \pi_3 )  - \frac{1}{2} (p_2 \pi_1 + (q^2-1) \pi_3) \\
& \qquad - q(p_1\pi_2 + (q-1) \pi_3) + q^2 \pi_3 \\
&= \frac{p_1^2 \pi_1}{2} - \frac{p_2 \pi_1}{2} - p_1\pi_2 + \frac{1}{2}(q-1)^2 \pi_3 -\frac{1}{2}(q^2-1) \pi_3 + q\pi_3 \\
&= \mathcal{M}_{(1,1, \frac{1}{2})}.
\end{align*}
\begin{align*}
\mathsf{T}_q(\mathcal{M}_{(\frac{3}{2},1)} ) 
&= \mathsf{T}_q \left( \frac{q(1-t^2)}{2(1-qt^2)}(p_1^2 \pi_1 - p_2 \pi_1) 
+ \frac{1-q}{1- q t^2}( p_1 \pi_2 - \pi_3) \right) \\
&= \frac{q(1-t^2)}{2(1-qt^2)}(p_1^2 \pi_1 - p_2 \pi_1) 
+ \frac{q(q-1)(1-t^2)}{1-qt^2}(p_1 \pi_2 - \pi_3)
+ \frac{q(1-q)}{1- q t^2}( p_1 \pi_2 - \pi_3) \\
&=  \frac{q(1-t^2)}{2(1-qt^2)}(p_1^2 \pi_1 - p_2 \pi_1) 
+ \frac{qt^2(1-q)}{1-qt^2}(p_1 \pi_2 - \pi_3)\\
&= q \cdot \mathcal{M}_{(\frac{3}{2},1)}  (q \to q^{-1}, t \to t^{-1}).
\end{align*}
For the remaining super Macdonald polynomials, it is convenient to set 
\begin{align*}
A:&= (1-qt) \mathcal{M}_{(2, \frac{1}{2})} \\
&= \frac{(1+q)(1-t)}{2} p_1^2 \pi_1
+ \frac{(1-q)(1+t)}{2} p_2 \pi_1 - q(1-t)p_1 \pi_2 
- (1-q)\pi_3 , \\
B:&= (1-qt)(1-q^2t)\mathcal{M}_{(\frac{5}{2})} \\
&= \frac{q^2(1+q)(1-t)^2}{2}p_1^2 \pi_1
+ \frac{q^2(1-q)(1-t^2)}{2} p_2 \pi_1 \\
& \qquad \qquad + q(1-q^2)(1-t)p_1 \pi_2 + (1-q)^2(1+q)\pi_3.
\end{align*}
we have
\begin{align*}
\mathsf{T}_q A &= \frac{1}{2}(1+q)(1-t)(p_1^2 \pi_1 + 2(q-1)p_1 \pi_2 + (q-1)^2 \pi_3)
+ \frac{1}{2}(1-q)(1+t) (p_2 \pi_1 + (q^2-1)\pi_3) \\
& \qquad -q^2(1-t)(p_1 \pi_2 + (q-1) \pi_3) - q^2(1-q) \pi_3 \\
&= \frac{1}{2}(1+q)(1-t)p_1^2 \pi_1 + \frac{1}{2}(1-q)(1+t)p_2 \pi_1 
- (1-t) p_1 \pi_2 - t(1-q^2)(1-q) \pi_3 - tq^2(1-q) \pi_3 \\
&= \frac{1}{2}(1+q)(1-t)p_1^2 \pi_1 + \frac{1}{2}(1-q)(1+t)p_2 \pi_1 
- (1-t) p_1 \pi_2 - t(1-q) \pi_3,
\end{align*}
which means
\begin{equation}
\mathsf{T}_q(\mathcal{M}_{(2, \frac{1}{2})}) = \mathcal{M}_{(2, \frac{1}{2})}(q \to q^{-1}, t \to t^{-1}).
\end{equation}
Similarly we have
\begin{align*}
\mathsf{T}_q B &= \frac{q^2}{2}(1+q)(1-t)^2(p_1^2 \pi_1 + 2(q-1)p_1 \pi_2 + (q-1)^2 \pi_3)
+ \frac{q^2}{2}(1-q)(1-t^2) (p_2 \pi_1 + (q^2-1)\pi_3) \\
& \qquad + q^2(1-q^2)(1-t)(p_1 \pi_2 + (q-1) \pi_3) + q^2(1-q)^2(1+q) \pi_3 \\
&= \frac{q^2}{2}(1+q)(1-t)^2p_1^2 \pi_1 + \frac{q^2}{2}(1-q)(1-t^2) p_2 \pi_1 
+ tq^2 (1-t)(1-q^2) p_1 \pi_ 2 \\
& \qquad + tq^2 (1-q^2)(1-q)(t-1) \pi_3 + tq^2 (1-q^2)(1-q) \pi_3 \\
&= \frac{q^2}{2}(1+q)(1-t)^2p_1^2 \pi_1 + \frac{q^2}{2}(1-q)(1-t^2) p_2 \pi_1 \\
& \qquad + tq^2 (1-t)(1-q^2) p_1 \pi_ 2 + t^2q^2 (1-q^2)(1-q)\pi_3. 
\end{align*}
which means
\begin{equation}
\mathsf{T}_q(\mathcal{M}_{(\frac{5}{2})}) = q^2 \cdot \mathcal{M}_{(\frac{5}{2})}(q \to q^{-1}, t \to t^{-1}).
\end{equation}

Let us check the consistency with Lemma \ref{Q-inversion}, which tells
\begin{align}
& \mathcal{Q}_1^{(q^{-1}, t^{-1})} \sim E_{1,-1}^{(q^{-1}, t^{-1})}
= \sum_{k=1}^\infty \pi_k \widetilde{c}_{k-1}^\vee [\partial /\partial p], \qquad
\mathcal{Q}_2^{(q^{-1}, t^{-1})} \sim F_{1,0}^{(q^{-1}, t^{-1})}
= \sum_{k=1}^\infty  c_{k-1}^\vee [p] \frac{\partial}{\partial \pi_k}, \\
& \mathcal{Q}_4 \sim E_{1,0} = \pi_1, \qquad
\mathcal{Q}_3 \sim F_{1,2} = \sum_{n=1}^\infty \sum_{k=0}^\infty
c^\vee_{k+\ell}[p] \widetilde{c}^\vee_{k}[\partial/\partial p]
q^{n-1} \frac{\partial}{\partial \pi_n} + \hbox{higher order}.
\label{Q3-leading}
\end{align}
%where
%$$
%C_\ell[p, \partial/\partial p] = \sum_{k=0}^\infty c^\vee_{k+\ell}[p] \widetilde{c}^\vee_{k}[\partial/\partial p].
%$$
It is easy to see 
\begin{equation}
\mathsf{T}_q \mathcal{Q}_4 \mathsf{T}_q^{-1} = \mathsf{T}_q \pi_1 \mathsf{T}_q^{-1} 
= \sum_{k=0}^\infty \pi_{k+1} \widetilde{c}^\vee_{k}[\partial/\partial p] = \mathcal{Q}_1^{(q^{-1}, t^{-1})}.
\end{equation}
{\it On the fermoin number one subspace}, we can compute
\begin{align}
\mathsf{T}_q^{-1}\mathcal{Q}_2^{(q^{-1}, t^{-1})} \mathsf{T}_q
&= \mathsf{T}_{q^{-1}}^{(0)} \left(\sum_{\ell=1}^\infty  
c_{\ell-1}^\vee [p] \frac{\partial}{\partial \pi_\ell}\right) \mathsf{T}_{q}^{(1)} \CR
&= \sum_{\ell=1}^\infty c_{\ell-1}^\vee [p] \frac{\partial}{\partial \pi_\ell}
\left(\sum_{n=1}^\infty \sum_{k=0}^\infty \widetilde{c}_k^\vee [ \partial/\partial p] \pi_{n+k} q^{n-1} 
\frac{\partial}{\partial \pi_n} \right) \CR
&= \sum_{n=1}^\infty \sum_{k=0}^\infty c_{k+n-1}^\vee [p]\widetilde{c}_k^\vee [ \partial/\partial p]
q^{n-1} \frac{\partial}{\partial \pi_n},
\end{align}
which agrees with \eqref{Q3-leading}.

%%%%%%%%%%%%%%%%%%%%%%%%%%%%%%%%%%%%%%%%%%%%%%%%%%%%%%%%%%%


\begin{thebibliography}{99}


%\cite{Alarie-Vezina:2019ohz}
\bibitem{Alarie-Vezina:2019ohz}
L.~Alarie-V\'ezina, O.~Blondeau-Fournier, P.~Desrosiers, L.~Lapointe and P.~Mathieu,
``Symmetric functions in superspace: a compendium of results and open problems (including a SageMath worksheet),''
[arXiv:1903.07777 [math-ph]].


%\cite{Awata:2011ce}
\bibitem{Awata:2011ce}
H.~Awata, B.~Feigin and J.~Shiraishi,
``Quantum Algebraic Approach to Refined Topological Vertex,''
JHEP \textbf{03} (2012), 041
%doi:10.1007/JHEP03(2012)041
[arXiv:1112.6074 [hep-th]].


%\cite{Awata:1994xd}
\bibitem{Awata:1994xd}
H.~Awata, Y.~Matsuo, S.~Odake and J.~Shiraishi,
``Collective field theory, Calogero-Sutherland model and generalized matrix models,''
Phys. Lett. B \textbf{347} (1995), 49-55
%doi:10.1016/0370-2693(95)00055-P
[arXiv:hep-th/9411053 [hep-th]].

%\cite{Awata:1995zk}
\bibitem{Awata:1995zk}
H.~Awata, H.~Kubo, S.~Odake and J.~Shiraishi,
``Quantum W(N) algebras and Macdonald polynomials,''
Commun. Math. Phys. \textbf{179} (1996), 401-416
%doi:10.1007/BF02102595
[arXiv:q-alg/9508011 [math.QA]].


%\cite{Blondeau-Fournier:2011sft}
\bibitem{Blondeau-Fournier:2011sft}
O.~Blondeau-Fournier, P.~Desrosiers, L.~Lapointe and P.~Mathieu,
``Macdonald polynomials in superspace: conjectural definition and positivity conjectures,''
Lett. Math. Phys. \textbf{101} (2012), 27-47
%doi:10.1007/s11005-011-0542-5
[arXiv:1112.5188 [math-ph]].


%\cite{Blondeau-Fournier:2012exj}
\bibitem{Blondeau-Fournier:2012exj}
O.~Blondeau-Fournier, P.~Desrosiers, L.~Lapointe and P.~Mathieu,
``Macdonald polynomials in superspace as eigenfunctions of commuting operators,''
J. Comb. \textbf{3} (2012), 495-561
%doi:10.4310/JOC.2012.v3.n3.a8
[arXiv:1202.3922 [math-ph]].


%\cite{Cheewaphutthisakun:2025zoc}
\bibitem{Cheewaphutthisakun:2025zoc}
P.~Cheewaphutthisakun, J.~Shiraishi and K.~Wiboonton,
``Quantum corner VOA and the super Macdonald polynomials,''
JHEP \textbf{10} (2025), 064
%doi:10.1007/JHEP10(2025)064
[arXiv:2504.17326 [hep-th]].


%\cite{Cheewaphutthisakun:2025ovm}
\bibitem{Cheewaphutthisakun:2025ovm}
P.~Cheewaphutthisakun, J.~Shiraishi and K.~Wiboonton,
``Quantum Corner Polynomials: A Generalization of Super Macdonald Polynomials and Their VOA Correspondence,''
[arXiv:2508.12267 [hep-th]].


\bibitem{FFJMM}
B.~Feigin, E.~Feigin, M.~Jimbo, T.~Miwa and E.~Mukhin,
``Quantum continuous $\mathfrak{gl}_\infty$: Semi-infinite construction of representations,''
Kyoto J. Math. {\bf 51} (2011), 337--364,
[arXiv:1002.3100 [math.QA]]


%\cite{FT}
\bibitem{FT}
B.~Feigin and A.~I.~Tsymbaliuk,
``Equivariant K-theory of Hilbert schemes via shuffle algebra,''
Kyoto J. Math. {\bf 51} (2011), 831--854,
[arXiv:0904.1679 [math.RT]].

%\cite{Filoche:2026xix}
\bibitem{Filoche:2026xix}
B.~Filoche, S.~Hohenegger and T.~Kimura,
``Wall-crossing of Instantons on the Blow-up,''
[arXiv:2604.20674 [hep-th]].


%\cite{Galakhov:2021xum}
\bibitem{Galakhov:2021xum}
D.~Galakhov, W.~Li and M.~Yamazaki,
``Shifted quiver Yangians and representations from BPS crystals,''
JHEP \textbf{08} (2021), 146
%doi:10.1007/JHEP08(2021)146
[arXiv:2106.01230 [hep-th]].


%\cite{Galakhov:2021vbo}
\bibitem{Galakhov:2021vbo}
D.~Galakhov, W.~Li and M.~Yamazaki,
``Toroidal and elliptic quiver BPS algebras and beyond,''
JHEP \textbf{02} (2022), 024
%doi:10.1007/JHEP02(2022)024
[arXiv:2108.10286 [hep-th]].


%\cite{Galakhov:2024cry}
\bibitem{Galakhov:2024cry}
D.~Galakhov, A.~Morozov and N.~Tselousov,
``Macdonald polynomials for super-partitions,''
Phys. Lett. B \textbf{856} (2024), 138911
%doi:10.1016/j.physletb.2024.138911
[arXiv:2407.03301 [hep-th]].

%\cite{Galakhov:2024zqn}
\bibitem{Galakhov:2024zqn}
D.~Galakhov, A.~Morozov and N.~Tselousov,
``Supersymmetric polynomials and algebro-combinatorial duality,''
SciPost Phys. \textbf{17} (2024), 119
%doi:10.21468/SciPostPhys.17.4.119
[arXiv:2407.04810 [hep-th]].

%\cite{Galakhov:2025phf}
\bibitem{Galakhov:2025phf}
D.~Galakhov, A.~Morozov and N.~Tselousov,
``Super-Hamiltonians for super-Macdonald polynomials,''
Phys. Lett. B \textbf{865} (2025), 139481
[arXiv:2501.14714 [hep-th]].


%\cite{Kanno:2025ifd}
\bibitem{Kanno:2025ifd}
H.~Kanno, R.~Ohkawa and J.~Shiraishi,
``Super Macdonald polynomials and BPS state counting on the blow-up,''
Lett. Math. Phys. \textbf{115} (2025) no.6, 138
%doi:10.1007/s11005-025-02014-y
[arXiv:2506.01415 [hep-th]].


\bibitem{Mac1988}
I. G. Macdonald, ``A new class of symmetric functions,''
S\'eminaire Lotharingien de Combinatoire 20, B20a-41 (1988).


\bibitem{MacD} 
I. G. Macdonald, 
{\sl Symmetric functions and Hall polynomials}, 2nd ed., Oxford Mathematical
Monographs, Oxford University Press (1995).


%\cite{Noshita:2021dgj}
\bibitem{Noshita:2021dgj}
G.~Noshita and A.~Watanabe,
``Shifted quiver quantum toroidal algebra and subcrystal representations,''
JHEP \textbf{05} (2022), 122
%doi:10.1007/JHEP05(2022)122
[arXiv:2109.02045 [hep-th]].


\bibitem{Ruijs}
S.N.M.~Ruijsenaars,
``Complete integrability of relativistic Calogero-Moser systems and 
elliptic function identities,'' Commun.\ Math.\ Phys.\ \textbf{110} (1987), 191--213.


%\cite{Sergeev-Veselov}
\bibitem{Sergeev-Veselov1}
A.N.~Sergeev and A.P.~Veselov, 
``Deformed quantum Calogero-Moser systems and Lie superalgebras,''
Commun. Math. Phys. \textbf{245} (2004), 249-278
[arXiv:math-ph/0303025].


%\cite{Sergeev-Veselov}
\bibitem{Sergeev-Veselov2}
A.N.~Sergeev and A.P.~Veselov, 
``Deformed Macdonald-Ruijsenaars operators and super Macdonald polynomials,''
Commun. Math. Phys. \textbf{288} (2009), 653-675
[arXiv:0707.3129 [math.QA]].
	
\end{thebibliography}
\end{document}